\documentclass[11pt]{article}
\usepackage[margin=1in]{geometry}
\usepackage{amsmath,amssymb,amsthm,mathtools}
\usepackage{booktabs}
\usepackage{array}
\usepackage{enumitem}
\usepackage{microtype}
\usepackage{hyperref}
\usepackage{bm}
\usepackage{graphicx}
\usepackage{makecell} 
\usepackage{multirow} 
\usepackage[table]{xcolor}

\usepackage{xeCJK}

\hypersetup{colorlinks=true,linkcolor=blue!55!black,citecolor=blue!55!black,urlcolor=blue!55!black}
\setlist{itemsep=0.25em,topsep=0.4em}
\allowdisplaybreaks

\theoremstyle{plain}
\newtheorem{theorem}{Theorem}[section]
\newtheorem{lemma}[theorem]{Lemma}

\newtheorem{corollary}[theorem]{Corollary}
\theoremstyle{definition}
\newtheorem{definition}[theorem]{Definition}

\newcommand{\R}{\mathbb{R}}
\newcommand{\vct}[1]{\bm{#1}}
\newcommand{\B}{\mathbb{B}}
\newcommand{\E}{\mathbb{E}}

\newcommand{\dist}{\operatorname{dist}}

\newcommand{\prog}{\operatorname{prog}}
\newcommand{\supp}{\operatorname{supp}}

\newcommand{\eps}{\epsilon}

\newcommand{\Dy}{D_{\mathcal{Y}}}

\newcommand{\norm}[1]{\left\lVert #1\right\rVert}

\newcommand{\Del}{\Delta_\Phi}

\newcommand{\Pcal}{\mathcal P}
\newcommand{\Rcal}{\mathcal R}
\newcommand{\equalcontrib}{\textsuperscript{*}}
\newcommand{\corresponding}{\textsuperscript{\dag}}

\title{Lower Bounds for Nonconvex-Concave Minimax Optimization}

\author{\begin{tabular}{@{}ccc@{}}
\parbox{0.28\textwidth}{\centering
Qilong Wu\equalcontrib\\[2pt]
The Chinese University of Hong Kong, Shenzhen\\[2pt]
{\footnotesize\texttt{qilongwu@link.cuhk.edu.cn}}
}
&
\parbox{0.28\textwidth}{\centering
Zhihao Gu\equalcontrib\\[2pt]
The Pennsylvania State University\\
{\footnotesize\texttt{zbg5155@psu.edu}}
}
&
\parbox{0.28\textwidth}{\centering
Junchi Yang\corresponding\\[2pt]
The Chinese University of Hong Kong, Shenzhen\\[2pt]
{\footnotesize\texttt{yangjunchi@cuhk.edu.cn}}
}
\end{tabular}}
\date{}

\begin{document}
\maketitle

\begingroup
\renewcommand{\thefootnote}{\fnsymbol{footnote}}
\footnotetext[1]{Equal contribution.}
\footnotetext[2]{Corresponding author. Also affiliated with Shenzhen Research Institute of Big Data and 
Shenzhen International Center for Industrial and Applied Mathematics.}
\endgroup

\begin{abstract}

We study lower bounds on the first-order oracle complexity of smooth nonconvex--concave minimax optimization. We consider objectives \(f\) that are jointly \(L\)-smooth in the primal and dual variables \((\vct{x},\vct{y})\), concave in \(\vct{y}\), and whose primal value function
$
\Phi(\vct{x}) := \max_{\vct{y}\in\mathcal Y} f(\vct{x},\vct{y})
$
satisfies the initial-gap condition
\(\Phi(\vct{0})-\inf_{\vct{x}\in\mathcal X}\Phi(\vct{x})\le \Delta_\Phi\), with a bounded dual domain satisfying
\(\operatorname{diam}(\mathcal Y)\le D_{\mathcal Y}\).
We measure stationarity by the norm of the gradient of the Moreau envelope of \(\Phi+\iota_{\mathcal X}\) with parameter \(1/(2L)\).
We prove that any deterministic zero-respecting first-order algorithm requires
$
\Omega\!\left(L^2D_{\mathcal Y}\Delta_\Phi\,\epsilon^{-3}\right)
$
oracle evaluations to find an \(\epsilon\)-stationary point. Under an unbiased stochastic first-order oracle with bounded variance, any stochastic zero-respecting algorithm requires
$
\Omega\!\left(L^3D_{\mathcal Y}^2\Delta_\Phi\,\epsilon^{-6}\right)
$
oracle evaluations. The same lower bounds hold when \(\Delta_\Phi\) is replaced by the initial primal--dual gap \(\mathcal G_0\). These deterministic and stochastic lower bounds match the corresponding upper bounds of \cite{lin2020nearoptimal} and \cite{zhang2022sapdplus}, respectively, up to a logarithmic factor in the deterministic setting.

\end{abstract}

\section{Introduction}
Minimax optimization provides a common formulation for a broad range of problems in machine learning, statistics, and robust decision making. Such problems arise in applications including two-player zero-sum games~\cite{v1928theorie}, generative modeling~\cite{goodfellow2020generative}, reinforcement learning~\cite{pinto2017robust,zhang2021multi}, robust and adversarial machine learning~\cite{madry2018towards,sinha2018certifiable}, and 
robust optimization~\cite{ben2009robust}. Optimization of pointwise maxima of nonconvex losses can also be written as a minimax problem through a dual representation~\cite{zhao2024primaldual}. A canonical formulation is
\begin{equation}
    \min_{\vct{x}\in\mathcal X}\max_{\vct{y}\in\mathcal Y} f(\vct{x},\vct{y}),
    \label{eq:minimax-intro}
\end{equation}
where $\vct{x}$ denotes the model or decision variable and $\vct{y}$ represents an adversary, perturbation, or dual variable. A substantial part of the recent complexity theory has focused on the nonconvex--strongly-concave (NC--SC) setting, for which the first-order complexity is now relatively well understood: near-optimal algorithms and matching lower bounds, up to logarithmic factors, are known under standard smoothness assumptions \cite{lin2020nearoptimal,li2021complexity,zhang2021complexity}. 

The strong-concavity assumption, however, can be restrictive in applications. In many adversarial and robust formulations, the inner problem is naturally concave but has no intrinsic strongly concave curvature. This motivates the more general nonconvex--concave (NC--C) setting, where $f(\vct{x},\cdot)$ is assumed only concave and the dual geometry is typically controlled through a bounded feasible set $\mathcal Y$, for example with \(\operatorname{diam}(\mathcal Y)\le D_{\mathcal{Y}}\). Define the primal value function
\[
    \Phi(\vct{x}):=\max_{\vct{y}\in\mathcal Y} f(\vct{x},\vct{y}),
\]
and its extended value function
$\varphi:=\Phi+\iota_{\mathcal X}$. Under $L$-Lipschitz smoothness assumption on $f$, \(\varphi\) is \(L\)-weakly convex. A standard stationarity measure for NC--C minimax optimization is based on the Moreau envelope of \(\varphi\), defined by
\begin{equation}
    \varphi_\lambda(\vct{x})
    :=
    \min_{\vct{z}\in\mathbb{R}^{d_x}}
    \left\{
        \varphi(\vct{z})
        +
        \frac{1}{2\lambda}\|\vct{z}-\vct{x}\|^2
    \right\},
    \qquad
    0<\lambda<L^{-1}.
    \label{eq:moreau-intro}
\end{equation}
The envelope $\varphi_\lambda$ is continuously differentiable for such
$\lambda$, even when $\Phi$ itself is not. The literature has focused on finding an \(\epsilon\)-stationary point \(\vct{x}\) satisfying
\begin{equation} \|\nabla\varphi_\lambda(\vct{x})\| \leq \epsilon, \label{eq:stationarity-intro} \end{equation}
or the corresponding expected stationarity criterion in the stochastic setting
\cite{davis2019stochastic,thekumparampil2019efficient,lin2020nearoptimal,
yang2020catalyst,kong2021accelerated,zhang2022sapdplus}.

Despite substantial progress on algorithms for NC--C minimax optimization, considerably less is known about its fundamental oracle complexity. The best-known first-order methods achieve \(\widetilde O(\epsilon^{-3})\) gradient evaluations in the deterministic setting~\cite{lin2020nearoptimal} and \(O(\epsilon^{-6})\) stochastic gradient evaluations under a bounded-variance oracle~\cite{zhang2022sapdplus}. In contrast, general lower bounds for NC--C minimax optimization remain much less developed. Since the NC--C class contains smooth nonconvex minimization and NC--SC minimax optimization as special cases, lower bounds for these problems immediately carry over to the NC--C setting. However, such inherited lower bounds do not match the best-known NC--C upper bounds. Existing lower-bound results tailored specifically to NC--C problems are instead largely algorithm-specific: they establish tight complexity guarantees for particular update rules, such as GDA, OGDA, extragradient, and perturbed or smoothed variants of GDA~\cite{mahdavinia2022tight,li2026smoothing}, rather than for a broad class of zero-respecting first-order methods.

\begin{table}[t]
\centering
\caption{Upper and lower bounds for $\Phi$-Moreau stationarity under
standard normalization. The function classes and the primal-value-gap and
primal--dual-gap parameterizations are defined in Definition~\ref{def:ncc-classes}. Universal constants, logarithmic factors, and the small-$\epsilon$ regime are suppressed.}\label{tab:main}
\renewcommand{\arraystretch}{1.35}
\small
\resizebox{\textwidth}{!}{\begin{tabular}{llcccc}
\toprule
\textbf{Setting}
&\textbf{Function class}
&\textbf{Oracle class}
&\textbf{Result}
&\textbf{References}
&\textbf{Complexity}
\\
\midrule

\multirow{3}{*}{Deterministic NC--C}
&$\mathcal F_{\rm NCC}(L,\Delta_\Phi,D_{\mathcal{Y}})$
&first-order
&Upper bound
&Corollary A.8 in \cite{lin2020nearoptimal}
&$\tilde{\mathcal O}(L^2D_{\mathcal{Y}}\Delta_\Phi\epsilon^{-3})$
\\

&\cellcolor{gray!12}$\mathcal F_{\rm NCC}(L,\Delta_\Phi,D_{\mathcal{Y}})$
&\cellcolor{gray!12}first-order zero-respecting
&\cellcolor{gray!12}Lower bound
&\cellcolor{gray!12}This paper (Theorem~\ref{thm:main})
&\cellcolor{gray!12}$\Omega(L^2D_{\mathcal{Y}}\Delta_\Phi\epsilon^{-3})$
\\

&\cellcolor{gray!12}$\mathcal F_{\rm NCC}^{\rm pd}(L,\mathcal G_0,D_{\mathcal{Y}})$
&\cellcolor{gray!12}first-order zero-respecting
&\cellcolor{gray!12}Lower bound
&\cellcolor{gray!12}This paper (Corollary~\ref{cor:det-pd-gap})
&\cellcolor{gray!12}$\Omega(L^2D_{\mathcal{Y}}\mathcal G_0\epsilon^{-3})$
\\

\midrule

\multirow{3}{*}{Stochastic NC--C}
&$\mathcal F_{\rm NCC}^{\rm pd}(L,\mathcal G_0,D_{\mathcal{Y}})$
&stochastic first-order
&Upper bound
&Theorem 5 in \cite{zhang2022sapdplus}
&$\mathcal O(L^3D_{\mathcal{Y}}^2\mathcal G_0\epsilon^{-6})$
\\
&\cellcolor{gray!12}$\mathcal F_{\rm NCC}(L,\Delta_\Phi,D_{\mathcal{Y}})$
&\cellcolor{gray!12}stochastic first-order zero-respecting
&\cellcolor{gray!12}Lower bound
&\cellcolor{gray!12}This paper (Theorem~\ref{thm:stoch-zr})
&\cellcolor{gray!12}$\Omega(L^3D_{\mathcal{Y}}^2\Delta_\Phi\epsilon^{-6})$
\\
&\cellcolor{gray!12}$\mathcal F_{\rm NCC}^{\rm pd}(L,\mathcal G_0,D_{\mathcal{Y}})$
&\cellcolor{gray!12}stochastic first-order zero-respecting
&\cellcolor{gray!12}Lower bound
&\cellcolor{gray!12}This paper (Corollary~\ref{cor:g0-lower-bound})
&\cellcolor{gray!12}$\Omega(L^3D_{\mathcal{Y}}^2\mathcal G_0\epsilon^{-6})$
\\
\bottomrule
\end{tabular}}
\end{table}

\subsection{Our Contributions and Main Results}
We establish oracle lower bounds for zero-respecting first-order algorithms. These bounds match the best-known upper bounds \cite{lin2020nearoptimal, zhang2022sapdplus}, up to a logarithmic factor in the deterministic setting. Table~\ref{tab:main} summarizes the resulting complexity landscape. Our main contributions are as follows:  
\begin{itemize}

\item \textbf{Oracle lower bounds for NC--C minimax optimization.}
We establish oracle lower bounds of
\(\Omega(L^2D_{\mathcal Y}\epsilon^{-3})\) in the deterministic setting and
\(\Omega(L^3D_{\mathcal Y}^2\epsilon^{-6})\) in the stochastic setting. Both bounds hold under either of two standard initial-gap conditions: a bounded initial primal--dual gap or a bounded initial gap measured through the Moreau envelope of the primal value function. The deterministic lower bound matches the upper bound of~\cite{lin2020nearoptimal} up to logarithmic factors, while the stochastic lower bound tightly matches the upper bound of~\cite{zhang2022sapdplus}. These results characterize its intrinsic difficulty relative to smooth nonconvex minimization and NC--SC minimax optimization.

\item \textbf{A bounded zero-chain construction.}
The deterministic lower bound is based on a sequence of primal coordinates coupled through dual blocks: each successive primal coordinate can become nonzero only after first-order information has propagated through the intervening dual block. The construction is scaled so that the unconstrained dual maximizer lies in the interior of the prescribed bounded dual domain and the primal value function admits an exact representation. Moreover, the joint gradient is Lipschitz continuous with a constant independent of the chain dimensions, while the norm of the Moreau-envelope gradient remains bounded away from zero whenever the terminal primal coordinate is zero. Thus, the construction combines the standard nonconvex zero-chain obstruction with the
additional oracle cost required for first-order information to propagate
through the dual variables.

\item \textbf{A stochastic dual-gate mechanism.}
For the stochastic lower bound, we replace the quadratic difference
terms $(y_k-y_{k+1})^2$ along each dual block with clipped quadratic
terms. This modification preserves both the primal value function and
the dual maximizers, while uniformly bounding the gradient components
corresponding to $y_2^{(i)},\ldots,y_N^{(i)}$ in each dual block.
We apply Bernoulli randomization only to these coordinates, leaving
$y_1^{(i)}$ and all primal coordinates deterministic. After contracting
the deterministic transitions, the progress analysis reduces to a
probability-$p$ zero-chain over 
randomized dual
coordinates. This yields the stochastic lower bound without imposing
a uniform bound on the full dual gradient.

\end{itemize}

\subsection{Related Work}
\label{sec:related-work}
\paragraph{Lower bounds for smooth nonconvex optimization.}
Lower bounds for first-order optimization have a long history dating back to the information-based complexity framework of Nemirovski and Yudin~\cite{nemirovski1983problem}. In the noiseless convex setting, oracle complexity is well understood for a broad range of smooth~\cite{Nesterov2018} and nonsmooth~\cite{nemirovski1983problem} problems. A common approach is to construct high-dimensional instances for which each oracle call reveals only limited information about the solution. This idea appears in several forms, including resisting-oracle arguments, linear-span restrictions, and zero-respecting constructions. Zero-respecting constructions are particularly useful for nonconvex problems, where the hard instance can be organized as a chain whose coordinates must be revealed sequentially.

For smooth high-dimensional nonconvex optimization, Carmon et al.~\cite{carmon2020lowerI,carmon2021lowerII} establish essentially tight first-order lower bounds for finding stationary points. For an \(L\)-smooth function with initial function gap at most \(\Delta\), their results imply that $\Omega\!\left(L\Delta\epsilon^{-2}\right)$ first-order oracle evaluations are necessary in the worst case to find an \(\vct{x}\) satisfying \(\|\nabla F(\vct{x})\|\leq\epsilon\). This bound matches the complexity of gradient descent up to constants under the standard normalization. Their construction has a sequential zero-chain structure: before the algorithm reveals a given coordinate, the oracle provides no information about subsequent coordinates. This zero-respecting framework has become a standard tool for proving lower bounds in high-dimensional nonconvex optimization and is also the starting point of the hard instance developed in this paper. In stochastic nonconvex optimization, Arjevani et al.~\cite{arjevani2023lower} combine deterministic nonconvex zero-chains with stochastic oracle constructions to obtain sharp lower bounds under bounded-variance and mean-squared-smoothness assumptions. Under a bounded-variance stochastic gradient oracle, the worst-case complexity can scale as \(\Omega(\epsilon^{-4})\), whereas mean-squared smoothness leads to the characteristic \(\epsilon^{-3}\) dependence. These results show that the zero-chain framework applies to both deterministic and stochastic oracle models.

\paragraph{Lower bounds for minimax optimization.}
For convex--concave saddle-point problems, lower complexity bounds were established using primal--dual hard instances in which information propagates alternately between the two variable blocks~\cite{ouyang2021lower,zhang2022lowerSaddle}. 
For nonconvex--strongly-concave (NC--SC) minimax optimization, Zhang et al.~\cite{zhang2021complexity} established lower bounds of
$
\Omega\!\left(L\Delta\sqrt{\kappa}\,\epsilon^{-2}\right)
$
in the deterministic setting and
$
\Omega\!\left(L\Delta\sqrt{\kappa}\,\epsilon^{-2}
+\kappa^{1/3}\sigma^2\epsilon^{-4}\right)
$
in the stochastic setting. Although NC--SC problems form a subclass of NC--C problems, the lower bounds in~\cite{li2021complexity,zhang2021complexity} cannot be directly extended to the NC--C setting by simply letting the strong-concavity parameter \(\mu\) tend to zero. In particular, increasing the condition number alone does not ensure that the dual maximizers remain within a domain of prescribed diameter. Imposing such a restriction on the dual domain may alter the induced primal value function and thereby invalidate the gradient lower bound established for the original construction. Likewise, rescaling the instance to enforce the diameter constraint changes both the smoothness and stationarity scales, leading to a weaker complexity lower bound than the one established here.

\paragraph{Upper bounds for NC--C minimax optimization.}
We next review existing upper bounds for NC--C minimax optimization under the stationarity measure adopted in this paper, namely, the norm of the gradient of a Moreau envelope of the primal value function.
In the deterministic setting, existing first-order methods achieve an essentially \(\widetilde O(\epsilon^{-3})\) complexity under this stationarity criterion. This rate was first attained by Thekumparampil et al.~\cite{thekumparampil2019efficient} using an inexact proximal-point method. It was subsequently achieved by several other approaches, including accelerated inexact proximal-point methods~\cite{lin2020nearoptimal,kong2021accelerated}, Catalyst-type schemes~\cite{yang2020catalyst}, and primal--dual smoothing methods for max-structured nonconvex optimization~\cite{zhao2024primaldual}. In the stochastic setting, Zhang et al.~\cite{zhang2022sapdplus} proposed SAPD+, which achieves an \(O(\epsilon^{-6})\) stochastic oracle complexity under a bounded-variance stochastic gradient oracle. Accordingly, the relevant upper-bound benchmarks are \(\widetilde O(\epsilon^{-3})\) in the deterministic setting and \(O(\epsilon^{-6})\) in the stochastic setting.

\paragraph{Notation.}
For a vector $\vct{z}\in \mathbb R^d$, $z_i$ is its $i$th coordinate,
$\supp(\vct{z}):=\{i\in[d]:z_i\neq0\}$ is its support, and $\norm{\vct{z}}$ and $\norm{\vct{z}}_\infty$
are its Euclidean and $\ell_\infty$ norms.  For a matrix $M$,
$\norm{M}_{\rm op}$ is its Euclidean operator norm and $M^\top$ is its
transpose.  We write $\vct{e}_i$ for the $i$th standard basis vector, $\B_2^d$ for
the Euclidean unit ball in $\R^d$, and $\mathsf P_C$ for Euclidean projection
onto a nonempty closed convex set $C$. 
For a nonempty closed convex set $C$, its indicator function is denoted by $\iota_C$, where
\[
\iota_C(\vct{z}):=
\begin{cases}
0, & \vct{z}\in C,\\
+\infty, & \vct{z}\notin C,
\end{cases}
\]
and its normal cone is
$N_C(\vct{z}):=\{\vct{v}:\langle \vct{v}, \vct{u}-\vct{z}\rangle\le 0
\text{ for every }\vct{u}\in C\}$. For $f:\mathcal X\times\mathcal Y\to\R$, we write $\nabla f=(\nabla_{\vct{x}} f,\nabla_{\vct{y}} f)$ for the joint gradient. All unspecified norms are Euclidean.

\section{Problem Setup}\label{sec:setup}
In this section, we first formalize the problem class, the stationarity measure, and the oracle model. We then introduce the two components used in both the deterministic and stochastic hard instances: the tridiagonal path matrix \(B_{\alpha,N}\), which defines the dual chain, and the smooth functions \(\nu\) and \(r\), which are used to construct the primal relay.

\subsection{Function Class and Stationarity Measure}

For a $f:\mathcal X\times\mathcal Y\to\R$, define its primal value
function by
\[
 \Phi_f(\vct{x}):=\max_{\vct{y}\in\mathcal Y}f(\vct{x},\vct{y}),\qquad \vct{x}\in\mathcal X,
\]
and its extended-valued version by
$\varphi_f:=\Phi_f+\iota_{\mathcal X}$ on the ambient primal space. 

We first introduce the function classes for smooth nonconvex--concave minimax problems. The two classes differ in how the initial optimality gap is measured. The first bounds the initial gap of the primal value function, whereas the second bounds the primal--dual gap at the reference point \((0,0)\). The former is used in \cite{lin2020nearoptimal}, while the latter is used in \cite{zhang2022sapdplus}. 

\begin{definition}[NC--C function classes]\label{def:ncc-classes}
Given $L,D_{\mathcal{Y}}>0$, consider continuously differentiable functions
$f:\mathcal X\times\mathcal Y\to\mathbb R$, over all finite dimensions
$d_x,d_y\in\mathbb N$, where
$\mathcal X\subset\mathbb R^{d_x}$ and
$\mathcal Y\subset\mathbb R^{d_y}$ are nonempty closed convex sets with
$0\in\mathcal X$ and $0\in\mathcal Y$, satisfying:
\begin{enumerate}
    \item $f$ is jointly $L$-smooth on $\mathcal X\times\mathcal Y$:
    \[
    \|\nabla f(\vct{x},\vct{y})-\nabla f(\vct{x}',\vct{y}')\|
    \le
    L\|(\vct{x},\vct{y})-(\vct{x}',\vct{y}')\|,
    \qquad
    \forall (\vct{x},\vct{y}),(\vct{x}',\vct{y}')\in\mathcal X\times\mathcal Y.
    \]

    \item For every $\vct{x}\in\mathcal X$, $f(\vct{x},\cdot)$ is concave.

    \item The dual domain satisfies: $\operatorname{diam}(\mathcal Y)
    :=
    \sup_{\vct{y},\vct{y}'\in\mathcal Y}\|\vct{y}-\vct{y}'\|
    \le D_{\mathcal{Y}}$.
    \item The initial scale is parameterized in either of the following
    two ways:
    \begin{enumerate}[label=(\roman*)]
        \item For $\Delta_\Phi>0$,
        $\mathcal F_{\rm NCC}(L,\Delta_\Phi,D_{\mathcal{Y}})$ denotes the class
        of functions satisfying
        \[
        \Phi_f(\vct{0})-\inf_{\vct{x}\in\mathcal X}\Phi_f(\vct{x})
        \le \Delta_\Phi;
        \]

        \item For $\mathcal G_0>0$,
        $\mathcal F_{\rm NCC}^{\rm pd}(L,\mathcal G_0,D_{\mathcal{Y}})$ denotes the class
        of functions satisfying
        \[
        \max_{\vct{y}\in\mathcal Y}f(0,\vct{y})
        -
        \inf_{\vct{x}\in\mathcal X}f(\vct{x},0)
        \le \mathcal G_0.
        \]
    \end{enumerate}
\end{enumerate}
\end{definition}

No convexity is imposed on \(f(\cdot,\vct{y})\). Since \(\mathcal Y\) is closed and has finite diameter in a finite-dimensional space, it is compact.
The continuity of \(f\) therefore ensures that the maximum defining
\(\Phi_f\) is attained for every \(\vct{x}\in\mathcal X\). These two quantities satisfy
\[
 \Phi_f(\vct{0})-\inf_{\vct{x}\in\mathcal X}\Phi_f(\vct{x})
 \leq
 \max_{\vct{y}\in\mathcal Y}f(0,\vct{y})
 -
 \inf_{\vct{x}\in\mathcal X}f(\vct{x},0),
\]
because \(\Phi_f(\vct{x})\geq f(\vct{x},0)\) for every
\(\vct{x}\in\mathcal X\). Thus, a bound on the initial primal--dual gap
also bounds the initial primal value gap. We will present the lower bounds for both classes. 

For \(\lambda>0\) and \(\vct{x}\in\R^{d_x}\), define the constrained
proximal set by
\begin{equation}\label{eq:prox-def}
 \Pcal_\lambda(\vct{x})
 :=
 \arg\min_{\vct{p}\in\mathcal X}
 \left\{
 \Phi_f(\vct{p})
 +\frac{1}{2\lambda}\norm{\vct{p}-\vct{x}}^2
 \right\},
 \quad
 G_\lambda(\vct{x};\vct{p})
 :=
 \lambda^{-1}(\vct{x}-\vct{p}),
 \quad
 \vct{p}\in\Pcal_\lambda(\vct{x}).
\end{equation}
The initial-gap condition implies that \(\Phi_f\) is bounded below on
\(\mathcal X\). 
Equivalently, 
the set \(\Pcal_\lambda(\vct{x})\) is the proximal set of the extended-value function
\(\varphi_f\) at \(\vct{x}\) with parameter \(\lambda\) on \(\R^{d_x}\). Although \(f\) is smooth, the primal value function \(\Phi_f\) need not be
differentiable because the dual maximizer may be nonunique. Nevertheless,
joint \(L\)-smoothness of \(f\) implies that
\(f(\cdot,\vct{y})\) is \(L\)-weakly convex for every
\(\vct{y}\in\mathcal Y\). 
Consequently, \(\varphi_f\) is a proper,
lower semicontinuous, and \(L\)-weakly convex function. We next recall the
standard Moreau-envelope regularity result for this class of functions.
\begin{lemma}[Lemma 2.2 in~\cite{davis2019stochastic}]
\label{lem:weak-convexity}
Let $\varphi:\mathbb R^{d_x}\to\mathbb R\cup\{+\infty\}$ be proper, lower semicontinuous, and $L$-weakly convex. For every $0<\lambda<L^{-1}$ and every
$\vct{x}\in\R^{d_x}$, the proximal problem has a unique
minimizer, denoted by $\vct{p}_\lambda^\varphi(\vct{x})$:
\[
\arg\min_{\vct{p}\in\R^{d_x}}
\left\{
\varphi(\vct{p})
+\frac{1}{2\lambda}\norm{\vct{p}-\vct{x}}^2
\right\}
=
\{\vct{p}_\lambda^\varphi(\vct{x})\}.
\] 
Moreover, the Moreau envelope
\[
\varphi_\lambda(\vct{x})
:=
\min_{\vct{p}\in\R^{d_x}}
\left\{
\varphi(\vct{p})+\frac{1}{2\lambda}\norm{\vct{p}-\vct{x}}^2
\right\}
\]
is continuously differentiable, and
\begin{equation}\label{eq:moreau-gradient}
\nabla\varphi_\lambda(\vct{x})
=
\lambda^{-1}
\bigl(\vct{x}-\vct{p}_\lambda^\varphi(\vct{x})\bigr).
\end{equation}
\end{lemma}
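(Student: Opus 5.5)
The plan is to reduce everything to the strong convexity of the proximal subproblem. Since $\varphi$ is $L$-weakly convex, the function $\psi(\vct{p}):=\varphi(\vct{p})+\frac{L}{2}\norm{\vct{p}}^2$ is convex, proper, and lower semicontinuous. For $0<\lambda<L^{-1}$ we write
\[
\varphi(\vct{p})+\frac{1}{2\lambda}\norm{\vct{p}-\vct{x}}^2
=\psi(\vct{p})+\Bigl(\frac{1}{2\lambda}-\frac{L}{2}\Bigr)\norm{\vct{p}}^2-\frac{1}{\lambda}\langle\vct{p},\vct{x}\rangle+\frac{1}{2\lambda}\norm{\vct{x}}^2 .
\]
This is the sum of a proper lsc convex function and a $(\lambda^{-1}-L)$-strongly convex quadratic. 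Hence it is proper, lsc, strongly convex, and coercive, because a convex lsc function admits an affine minorant. Consequently it has exactly one minimizer $\vct{p}_\lambda^\varphi(\vct{x})$, and $\varphi_\lambda(\vct{x})$ is finite.

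Next, we show that the prox map is Lipschitz. The optimality condition reads $\lambda^{-1}(\vct{x}-\vct{p})\in\partial\varphi(\vct{p})$, where $\partial\varphi=\partial\psi-L\,\mathrm{id}$ is understood in the convex sense. Take two points $\vct{x},\vct{x}'$ with proximal points $\vct{p},\vct{p}'$. Monotonicity of $\partial\psi$ gives $\langle \lambda^{-1}(\vct{x}-\vct{x}')-\lambda^{-1}(\vct{p}-\vct{p}')+L(\vct{p}-\vct{p}'),\vct{p}-\vct{p}'\rangle\ge0$. It follows that $\norm{\vct{p}-\vct{p}'}\le(1-\lambda L)^{-1}\norm{\vct{x}-\vct{x}'}$, so $\vct{p}_\lambda^\varphi$ is continuous.

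For differentiability and the gradient formula, compare the values at $\vct{x}'$ and $\vct{x}$ using suboptimality of the wrong proximal point. Plugging $\vct{p}=\vct{p}_\lambda^\varphi(\vct{x})$ into the problem at $\vct{x}'$ gives the upper bound
\[
\varphi_\lambda(\vct{x}')-\varphi_\lambda(\vct{x})\le\frac{1}{2\lambda}\bigl(\norm{\vct{p}-\vct{x}'}^2-\norm{\vct{p}-\vct{x}}^2\bigr)=\lambda^{-1}\langle\vct{x}-\vct{p},\vct{x}'-\vct{x}\rangle+\frac{1}{2\lambda}\norm{\vct{x}'-\vct{x}}^2 .
\]
Symmetrically, plugging $\vct{p}'$ into the problem at $\vct{x}$ gives a lower bound $\lambda^{-1}\langle\vct{x}'-\vct{p}',\vct{x}'-\vct{x}\rangle-\frac{1}{2\lambda}\norm{\vct{x}'-\vct{x}}^2$. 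We then replace $\vct{x}'-\vct{p}'$ by $\vct{x}-\vct{p}$, and the Lipschitz bound above controls the error at $O(\norm{\vct{x}'-\vct{x}}^2)$. This sandwiches the increment as $\lambda^{-1}\langle\vct{x}-\vct{p},\vct{x}'-\vct{x}\rangle+O(\norm{\vct{x}'-\vct{x}}^2)$. Hence $\varphi_\lambda$ is (Fréchet) differentiable with gradient \eqref{eq:moreau-gradient}, and this gradient is continuous (indeed Lipschitz) by continuity of the prox map.

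The main obstacle is the first step. Existence requires coercivity when $\varphi$ is merely lsc and weakly convex. This is handled by the affine-minorant argument for $\psi$; every later step is routine.
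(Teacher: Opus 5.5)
Your proof is correct. The paper does not prove this lemma; it imports it as Lemma~2.2 of Davis--Drusvyatskiy, which in turn rests on the classical Moreau-envelope theory for weakly convex (prox-regular) functions.

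Your argument is a self-contained alternative, and its steps check out:
\begin{enumerate}
\item You convexify via $\psi=\varphi+\frac L2\norm{\cdot}^2$ and obtain existence and uniqueness of the proximal point from $(\lambda^{-1}-L)$-strong convexity together with an affine minorant of $\psi$.
\item You derive the $(1-\lambda L)^{-1}$-Lipschitz bound on $\vct{p}_\lambda^\varphi$ from monotonicity of $\partial\psi$. The subdifferential sum rule you implicitly use there is valid, because the quadratic part is finite and continuous.
\item You sandwich $\varphi_\lambda(\vct{x}')-\varphi_\lambda(\vct{x})$ between the two comparisons with the ``wrong'' proximal point. Both increment formulas you write are exact, and the replacement error is at most $\lambda^{-1}\bigl(1+(1-\lambda L)^{-1}\bigr)\norm{\vct{x}'-\vct{x}}^2$.
\end{enumerate}

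Your route buys independence from external theory and a slightly stronger conclusion: $\nabla\varphi_\lambda$ is globally Lipschitz, not merely continuous. The citation buys brevity and access to a framework that also covers local, prox-regular settings the paper never needs.

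If you want an even shorter self-contained proof, complete the square. With $\mu:=\lambda/(1-\lambda L)$ and $\psi_\mu$ the Moreau envelope of the convex function $\psi$, one gets $\varphi_\lambda(\vct{x})=\psi_\mu\bigl(\vct{x}/(1-\lambda L)\bigr)-\frac{L}{2(1-\lambda L)}\norm{\vct{x}}^2$. This reduces the statement directly to the convex case.
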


Applying Lemma~\ref{lem:weak-convexity} to
$\varphi_f=\Phi_f+\iota_{\mathcal X}$, we write
$\vct{p}_\lambda(\vct{x})
:=\vct{p}_\lambda^{\varphi_f}(\vct{x})$.
For $0<\lambda<L^{-1}$, this gives
\[
\Pcal_\lambda(\vct{x})
=\{\vct{p}_\lambda(\vct{x})\},
\qquad
\nabla(\varphi_f)_\lambda(\vct{x})
=
G_\lambda
\bigl(\vct{x};\vct{p}_\lambda(\vct{x})\bigr).
\] 
For both function classes, we measure stationarity using the gradient of the Moreau envelope of the value function, as defined below. This is a standard stationarity measure for weakly convex optimization~\cite{davis2019stochastic,davis2019proximally} and nonconvex--concave minimax optimization~\cite{zhang2022sapdplus}. By Lemma~\ref{lem:weak-convexity}, the corresponding stationarity conditions can equivalently be expressed in terms of
$
G_{1/(2L)}\bigl(\vct{x};\vct{p}_{1/(2L)}(\vct{x})\bigr).
$

\begin{definition}\label{def:stationarity}
Let $f\in\mathcal F_{\rm NCC}(L,\Del,\Dy) \cup
\mathcal F_{\rm NCC}^{\rm pd}(L,\mathcal G_0,D_{\mathcal{Y}})$.  A point $\vct{x}\in\mathcal X$ is
$\eps$-stationary for the primal value function if
\[
 \norm{\nabla(\varphi_f)_{1/(2L)}(\vct{x})}\le\eps.
\]
A random output $\vct{x}$ is expected $\eps$-stationary if
\[
 \E\norm{\nabla(\varphi_f)_{1/(2L)}(\vct{x})}\le\eps.
\]
\end{definition}

\subsection{Oracle and Algorithm Classes}

We consider deterministic and stochastic first-order algorithms for
nonconvex--concave minimax optimization. We begin by specifying the
corresponding first-order oracle models.

\begin{definition}[Deterministic first-order oracle]
The deterministic first-order oracle of a differentiable function
$f:\mathcal X\times\mathcal Y\rightarrow\mathbb R$ is the mapping
\[
\mathcal O_f^{\rm fo}(\vct{x},\vct{y})
=
(f(\vct{x},\vct{y}),\nabla_{\vct{x}} f(\vct{x},\vct{y}),\nabla_{\vct{y}} f(\vct{x},\vct{y})),
\qquad (\vct{x},\vct{y})\in\mathcal X\times\mathcal Y .
\]
\end{definition}

\begin{definition}[Stochastic first-order oracle]
The stochastic first-order oracle of a differentiable function
$f:\mathcal X\times\mathcal Y\rightarrow\mathbb R$ returns
\[
\mathcal O_f^{\rm sfo}(\vct{x},\vct{y})
=
(f(\vct{x},\vct{y}),\widehat\nabla_{\vct{x}} f(\vct{x},\vct{y};\xi),\widehat\nabla_{\vct{y}} f(\vct{x},\vct{y};\xi)), \qquad (\vct{x},\vct{y})\in\mathcal X\times\mathcal Y,
\]
where $\xi$ is a random variable satisfying
\[
\mathbb E_\xi[
\widehat\nabla f(\vct{x},\vct{y};\xi)]=\nabla f(\vct{x},\vct{y}),\qquad \mathbb E_\xi
\left[
\|
\widehat\nabla f(\vct{x},\vct{y};\xi)
-
\nabla f(\vct{x},\vct{y})
\|^2
\right]
\leq \sigma^2.
\]
\end{definition}

We restrict attention to zero-respecting first-order algorithms. Informally,
such an algorithm can update only coordinates that have been revealed by
previous first-order oracle responses. This class includes many standard first-order methods, including
gradient descent--ascent, extragradient, and their standard
momentum and stochastic variants
\cite{carmon2020lowerI,li2021complexity}.

\begin{definition}[Zero-respecting first-order algorithm]\label{def:zero-respecting}
A first-order algorithm is zero-respecting if, for every admissible
function $f$ and every oracle transcript, the $(t+1)$-th iterate
$(\vct{x}^{t+1},\vct{y}^{t+1})$ satisfies
\[
\vct{x}^{t+1}
\in
\left\{
\mathsf P_{\mathcal X}(\vct{u}):
\operatorname{supp}(\vct{u})
\subseteq
\bigcup_{0\leq i\leq t}
\left(
\operatorname{supp}(\vct{x}^i)
\cup
\operatorname{supp}(\nabla_{\vct{x}} f(\vct{x}^i,\vct{y}^i))
\right)
\right\},
\]
and
\[
\vct{y}^{t+1}
\in
\left\{
\mathsf P_{\mathcal Y}(\vct{v}):
\operatorname{supp}(\vct{v})
\subseteq
\bigcup_{0\leq i\leq t}
\left(
\operatorname{supp}(\vct{y}^i)
\cup
\operatorname{supp}(\nabla_{\vct{y}} f(\vct{x}^i,\vct{y}^i))
\right)
\right\}.
\]
In the stochastic setting, the same condition is imposed pathwise with
$\nabla_{\vct{x}} f(\vct{x}^i,\vct{y}^i)$ and $\nabla_{\vct{y}} f(\vct{x}^i,\vct{y}^i)$ replaced by the realized
stochastic gradients $\widehat\nabla_{\vct{x}} f(\vct{x}^i,\vct{y}^i;\xi_i)$ and
$\widehat\nabla_{\vct{y}} f(\vct{x}^i,\vct{y}^i;\xi_i)$, respectively.
\end{definition}

Throughout, all algorithms are initialized at
\((\vct{x}^0,\vct{y}^0)=(0,0)\), and the final primal output is also required
to satisfy the same zero-respecting condition as the primal iterates. Since Euclidean projection onto each feasible set constructed in Section~\ref{sec:hard} preserves coordinate support, Definition~\ref{def:zero-respecting} implies that every query is supported only on coordinates revealed by previous gradient evaluations.
 In the stochastic setting, the zero-respecting
condition is imposed pathwise with respect to the realized stochastic
gradients.

\subsection{Zero-chain Framework}
Zero-chain constructions are a standard tool for establishing first-order
oracle lower bounds
\cite{carmon2020lowerI,carmon2021lowerII,arjevani2023lower,
li2021complexity,ji2025lower,chen2025condition}.
To describe the sequential revelation of coordinates, for
\(\vct{v}\in\R^d\) and \(\theta\ge0\), define
\[
 \prog_\theta(\vct{v})
 :=
 \max\Bigl(
 \{j\in\{1,\ldots,d\}:|v_j|>\theta\}\cup\{0\}
 \Bigr).
\]
Thus, \(\prog_\theta(\vct{v})\) is the largest index of a coordinate
whose magnitude exceeds \(\theta\), or zero if no such coordinate exists.
In particular, \(\prog_0(\vct{v})\) is the largest index in the support
of \(\vct{v}\), with \(\prog_0(0)=0\).

\begin{definition}[Zero-chain]
A differentiable function \(f:\mathcal X\subseteq\R^d\to\R\) is a
first-order zero-chain if, for every \(\vct{x}\in\mathcal X\),
\[
 \prog_0(\nabla f(\vct{x}))
 \le
 \min\{\prog_0(\vct{x})+1,d\}.
\]
\end{definition}

This condition ensures that a gradient evaluation reveals no coordinate
with index greater than \(\prog_0(\vct{x})+1\). The following stochastic
analogue additionally bounds the probability of revealing that next
coordinate.

\begin{definition}[Probability-\(p\) zero-chain]
Let \(p\in(0,1]\), and let \(f:\mathcal X\subseteq\R^d\to\R\) be a
differentiable function equipped with a stochastic first-order oracle $\mathcal O_f^{\rm sfo}(\vct{x})
 =
 \bigl(f(\vct{x}),\widehat\nabla f(\vct{x};\xi)\bigr)$. We call \(f\) a probability-\(p\) zero-chain with respect to this oracle
if, for every \(\vct{x}\in\mathcal X\) with
\(r:=\prog_0(\vct{x})<d\),
\[
 \mathbb P_\xi\!\left(
   \prog_0(\widehat\nabla f(\vct{x};\xi))=r+1
 \right)\le p,
 \qquad
 \mathbb P_\xi\!\left(
   \prog_0(\widehat\nabla f(\vct{x};\xi))>r+1
 \right)=0.
\]
Here, the probability is taken over the randomness of the oracle.
\end{definition}

Thus, the stochastic gradient has no nonzero coordinate with index
greater than \(r+1\) almost surely, and its \((r+1)\)st coordinate is
nonzero with probability at most \(p\).

When Euclidean projection onto each feasible set preserves
coordinate support, a zero-respecting algorithm initialized
at the origin can reveal at most one new coordinate per
oracle call on a deterministic zero-chain. After $t$ oracle calls, its next query and any admissible
output are therefore supported on the first $t$ coordinates. After \(t\) oracle calls, its next query and any admissible output are
therefore supported on the first \(t\) coordinates. To establish a
deterministic lower bound, it thus suffices to construct a zero-chain
in the target function class for which the stationarity measure exceeds
\(\epsilon\) at every feasible point \(\vct{x}\) satisfying
\(\prog_0(\vct{x})<T\). Any zero-respecting algorithm must then make at
least \(T\) oracle calls to obtain an \(\epsilon\)-stationary point.
For a probability-$p$ zero-chain, each oracle call reveals
a new coordinate with conditional probability at most $p$,
given the preceding oracle transcript and the current query.
Consequently, revealing the first $T$ coordinates requires
$\Omega(T/p)$ oracle calls in expectation.

\subsection{Component Functions}

We now introduce several functions that will serve as building blocks for the hard instance constructed later. We also establish some of their key properties, which provide intuition for how these functions are used in the construction.

\paragraph{Quadratic chain.}
Following \cite{Nesterov2018,carmon2021lowerII}, for
\(\alpha>0\) and an integer \(N\ge2\), define the tridiagonal matrix
\begin{equation}\label{eq:path-matrix}
 B_{\alpha,N}
 :=
 \begin{bmatrix}
 1+\alpha^2&-1&&&\\
 -1&2&-1&&\\
 &\ddots&\ddots&\ddots&\\
 &&-1&2&-1\\
 &&&-1&1
 \end{bmatrix}
 \in\R^{N\times N}.
\end{equation}
The associated quadratic function is
\begin{equation}\label{eq:path-energy}
 f_{\alpha,N}(\vct{x})
 :=
 \frac12\vct{x}^\top B_{\alpha,N}\vct{x}
 =
 \frac{\alpha^2}{2}x_1^2
 +\frac12\sum_{k=1}^{N-1}(x_k-x_{k+1})^2.
\end{equation}
Each difference term couples two neighboring coordinates, while the
term \(\alpha^2x_1^2/2\) ensures that \(B_{\alpha,N}\) is positive
definite. The tridiagonal structure implies that each gradient component
depends only on the corresponding coordinate and its immediate
neighbors. Our later construction includes the negative of this quadratic function in the dual variable, which is a concave term. Its structure restricts the propagation of first-order information along the dual path and allows the dual maximizer to be computed explicitly.

\paragraph{Scalar component functions.}
For nonconvex optimization, Carmon et al.~\cite{carmon2021lowerII}
construct zero-chains using the smooth functions
\begin{equation}
 \bar\Psi(t)
 :=
 \begin{cases}
 0, & t\le\frac12,\\
 \exp\left(1-\frac{1}{(2t-1)^2}\right), & t>\frac12,
 \end{cases}
 \qquad
 \bar\Phi(t)
 :=
 \sqrt e\int_{-\infty}^{t}e^{-s^2/2}\,ds.
\end{equation}
The function \(\bar\Psi\) vanishes below a fixed threshold, which
restricts when one coordinate can affect the gradient of the next.
In our minimax instance construction, we want the dual maximizers to remain within a prescribed bounded domain while retaining a stationarity lower bound for the value function that is independent of the chain length. A direct rescaling of an unconstrained zero-chain from~\cite{carmon2021lowerII} does not guarantee both properties. We therefore introduce two continuously differentiable functions:

\begin{equation}\label{eq:relay-maps}
 \nu(t)
 :=
 \begin{cases}
 0, & t\le0,\\
 3t^2-2t^3, & 0<t<1,\\
 1, & t\ge1,
 \end{cases}
 \qquad
 r(t)
 :=
 \begin{cases}
 t, & t\le0,\\
 t(1-t), & 0<t<1,\\
 1-t, & t\ge1.
 \end{cases}
\end{equation}

The function \(\nu\) is the cubic transition function used in previous
zero-chain constructions~\cite{pan2026nonconvexpl}. It takes values in
\([0,1]\), equals zero on \((-\infty,0]\), and equals one on
\([1,\infty)\). We use it to couple successive primal coordinates
without introducing a direct gradient contribution at a coordinate
equal to zero, since \(\nu'(0)=0\).

The function \(r\) has zeros at \(0\) and \(1\), and satisfies
\(r'(0)=1\). In the hard instance, this nonzero derivative allows an
auxiliary variable to produce a nonzero gradient at the next primal
coordinate even when that coordinate is zero. 
The following lemma summarizes the key properties of these two functions. Its proof is deferred to Appendix~\ref{relay-properties}.

\begin{lemma}\label{lemma:relay-properties}
The functions \(\nu\) and \(r\) are \(C^{1,1}\) and satisfy
\begin{equation}\label{eq:scalar-bounds}
 |\nu'|\le\frac32,\qquad |\nu''|\le6,
 \qquad |r'|\le1,\qquad |r''|\le2,
\end{equation}
where the second-derivative bounds hold almost everywhere. Moreover,
\begin{equation}\label{eq:origin-identities}
 \nu(0)=\nu'(0)=r(0)=0,\qquad r'(0)=1,
 \qquad \nu'(t)=6r(t),\quad 0<t<1.
\end{equation}
For every \(\kappa\in(0,1/8)\), if \(t\le2\kappa\), then
\[
 \nu(t)\le2|r(t)|,
 \qquad
 r'(t)\ge1-4\kappa,
\]
and if \(t\ge1-2\kappa\), then
\[
 1-\nu(t)\le2|r(t)|.
\]
\end{lemma}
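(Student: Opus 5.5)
The plan is a direct piecewise verification of the lemma. I will compute first and second derivatives on each of the three pieces $(-\infty,0]$, $(0,1)$, $[1,\infty)$, check that the one-sided limits match at the breakpoints, and then read off the bounds.

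\textbf{Derivatives and regularity.} For $\nu$ we have $\nu'(t)=6t-6t^2=6t(1-t)$ on $(0,1)$ and $\nu'=0$ outside. For $r$ we have $r'(t)=1$ for $t\le0$, $r'(t)=1-2t$ on $(0,1)$, and $r'(t)=-1$ for $t\ge1$.

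\begin{itemize}
\item Both functions are continuous at $0$ and $1$, since $\nu(0^+)=0$, $\nu(1^-)=1$, $r(0^+)=0$ and $r(1^-)=0$.
\item Their derivatives are also continuous there: $\nu'(0^+)=\nu'(1^-)=0$, $r'(0^+)=1$ and $r'(1^-)=-1$.
\item Each $\nu'$ and $r'$ is piecewise linear and continuous, hence Lipschitz. This gives $C^{1,1}$.
\end{itemize}

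The bounds on $(0,1)$ follow from the same formulas. Since $t(1-t)\le1/4$, we get $|\nu'|\le 6\cdot\frac14=\frac32$. The second derivative is $\nu''=6-12t\in[-6,6]$, so $|\nu''|\le6$. We also have $|r'|=|1-2t|\le1$ and $|r''|=2$. Outside $(0,1)$ all these quantities are $0$ or $\pm1$, so the bounds hold a.e.

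\textbf{Identities at the origin.} These are immediate: $\nu(0)=\nu'(0)=r(0)=0$ and $r'(0)=1$. On $(0,1)$, $\nu'(t)=6t(1-t)=6r(t)$.

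\textbf{Threshold inequalities.} Fix $\kappa<1/8$, so $2\kappa<1/4$.

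For $t\le2\kappa$, I split into two cases.
\begin{itemize}
\item If $t\le0$, then $\nu(t)=0$ and $r'(t)=1$, so both claims hold.
\item If $0<t\le2\kappa$, then $\nu(t)=t^2(3-2t)\le3t^2$ and $2|r(t)|=2t(1-t)$. The first claim reduces to $3t\le2(1-t)$, i.e. $t\le2/5$, which holds since $t<1/4$. For the second claim, $r'(t)=1-2t\ge1-4\kappa$.
\end{itemize}

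For $t\ge1-2\kappa$, I again split into two cases.
\begin{itemize}
\item If $t\ge1$, then $1-\nu(t)=0$.
\item If $1-2\kappa\le t<1$, set $s=1-t\in(0,2\kappa]$. By the symmetry $\nu(1-s)=1-\nu(s)$ (from $3(1-s)^2-2(1-s)^3=1-3s^2+2s^3$), we have $1-\nu(t)=\nu(s)$. Since $r(t)=t(1-t)=r(s)$, the claim reduces to the first case.
\end{itemize}

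There is no real obstacle. The only step needing care is the reduction $3t\le 2(1-t)$ and the symmetry trick, both of which are elementary.
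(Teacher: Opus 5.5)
Your proposal is correct and follows essentially the same piecewise verification as the paper. The differences are cosmetic: you bound $\nu(t)\le 3t^2$ instead of computing the ratio $\nu(t)/r(t)=t(3-2t)/(1-t)$, and you handle $t\ge 1-2\kappa$ via the symmetry $\nu(1-s)=1-\nu(s)$, $r(1-s)=r(s)$ rather than computing $(1-\nu(t))/r(t)=(1-t)(1+2t)/t$ directly. One small slip: $\nu'$ is piecewise quadratic, not piecewise linear, but its Lipschitz continuity still follows from continuity together with your bound $|\nu''|\le 6$.
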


The derivative bounds are used to establish smoothness of the hard
instance. The remaining inequalities relate the values and derivatives
of the component functions near the two thresholds \(0\) and \(1\);
they are used to prove the stationarity lower bound before the terminal
primal coordinate is revealed.

\paragraph{Normalized coupling between primal coordinates.}
Fix an integer \(T\ge2\). For
\(\vct{u}\in\R^T\), define
\(\vct{q}(\vct{u})=(q_1(\vct{u}),\ldots,q_{T-1}(\vct{u}))\) by
\begin{equation}\label{eq:q-rho}
 q_i(\vct{u})
 :=
 \nu(u_i)\bigl(1-\nu(u_{i+1})\bigr),
 \qquad i=1,\ldots,T-1.
\end{equation}
Each component depends only on two successive coordinates. In
particular, \(q_i(\vct{u})=0\) if \(u_i\le0\) or \(u_{i+1}\ge1\),
whereas \(q_i(\vct{u})=1\) if \(u_i\ge1\) and \(u_{i+1}\le0\).
Thus, \(\vct{q}\) identifies transitions from a coordinate above the
upper threshold to a subsequent coordinate below the lower threshold.
Such a transition is central to the stationarity argument when the
first primal coordinate is near or above \(1\) but the terminal
coordinate is still zero.

Although each component of \(\vct{q}\) is bounded by one, its Euclidean
norm can grow with \(T\). We therefore define
\[
 \vct{\rho}(\vct{u})
 :=
 \frac{\vct{q}(\vct{u})}
 {\sqrt{1+\norm{\vct{q}(\vct{u})}^2}},
\]
so that
\begin{equation}\label{eq:rho-bound}
 \norm{\vct{\rho}(\vct{u})}\le1,
 \qquad \vct{u}\in\R^T.
\end{equation}
This uniform bound allows \(\vct{\rho}\) to be coupled to auxiliary
primal variables whose feasible radius is independent of \(T\).
The normalization preserves the zero coordinates of \(\vct{q}\).
Moreover, if \(u_j=0\), then \(\nu'(u_j)=0\), so the \(j\)th column
of \(D\vct{q}(\vct{u})\), and hence of
\(D\vct{\rho}(\vct{u})\), vanishes. This property is used to verify
that the normalized coupling preserves the required zero-chain
structure.

The next lemma gives uniform bounds on the Jacobians of \(\vct{q}\)
and \(\vct{\rho}\), where \(D\) denotes the Jacobian. All constants
are independent of \(T\), which is essential for obtaining a
smoothness bound independent of the dimensions of the hard instance. The proof is given in Appendix \ref{app:relay-derivatives}.

\begin{lemma}\label{lemma:relay-derivatives}
For all \(\vct{u},\vct{v}\in\R^T\),
\[
 \norm{D\vct{q}(\vct{u})}_{\rm op}\le3,
 \qquad
 \norm{D\vct{q}(\vct{u})-D\vct{q}(\vct{v})}_{\rm op}
 \le17\norm{\vct{u}-\vct{v}},
\]
and
\[
 \norm{D\vct{\rho}(\vct{u})}_{\rm op}\le3,
 \qquad
 \norm{D\vct{\rho}(\vct{u})-D\vct{\rho}(\vct{v})}_{\rm op}
 \le71\norm{\vct{u}-\vct{v}}.
\]
\end{lemma}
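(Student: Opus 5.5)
The plan is to exploit the sparse structure of $D\vct{q}$. Row $i$ of $D\vct{q}(\vct{u})$ has at most two nonzero entries, in columns $i$ and $i+1$:
\[
\partial_i q_i=\nu'(u_i)\bigl(1-\nu(u_{i+1})\bigr),\qquad \partial_{i+1}q_i=-\nu(u_i)\nu'(u_{i+1}).
\]
Hence $D\vct{q}$ is upper bidiagonal, $D\vct{q}=A+C$, with $A$ carrying the diagonal entries and $C$ the superdiagonal ones.

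For the first Jacobian bound, Lemma~\ref{lemma:relay-properties} gives $|\nu'|\le 3/2$ and $0\le\nu\le1$. Each entry is therefore bounded by $3/2$, so $\norm{A}_{\rm op},\norm{C}_{\rm op}\le 3/2$ and $\norm{D\vct{q}}_{\rm op}\le3$.

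For the Lipschitz bound on $D\vct{q}$, I would bound the operator norm of $D\vct{q}(\vct{u})-D\vct{q}(\vct{v})$ by its Frobenius norm. Write each entry difference as a product difference and use $|\nu'|\le 3/2$, $|\nu''|\le 6$ (a.e., so $\nu'$ is $6$-Lipschitz), and $|\nu|\le1$ with $\nu$ being $3/2$-Lipschitz. This gives
\[
|\Delta\partial_iq_i|\le 6|u_i-v_i|+\tfrac94|u_{i+1}-v_{i+1}|,
\]
and the same bound with indices swapped for $\Delta\partial_{i+1}q_i$. Summing the squares over $i$ and using $(a+b)^2\le2a^2+2b^2$, each coordinate difference $|u_j-v_j|$ appears a bounded number of times. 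The result is a bound $c\norm{\vct{u}-\vct{v}}$ with an explicit $c$, which should come out near $\sqrt{2\cdot2(36+81/16)}\approx 12.8\le17$.

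For $\vct{\rho}=\vct{q}/\sqrt{1+\norm{\vct{q}}^2}$, I would write $\vct{\rho}=h(\vct{q})$ with $h(\vct{z})=\vct{z}/\sqrt{1+\norm{\vct{z}}^2}$. Its Jacobian is
\[
Dh(\vct{z})=(1+\norm{\vct{z}}^2)^{-1/2}\Bigl(I-\frac{\vct{z}\vct{z}^\top}{1+\norm{\vct{z}}^2}\Bigr).
\]
The eigenvalues of this matrix lie in $(0,1]$, so $\norm{Dh}_{\rm op}\le1$, and the chain rule gives $\norm{D\vct{\rho}}_{\rm op}\le3$.

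For the Lipschitz bound on $D\vct{\rho}$, I would split
\[
D\vct{\rho}(\vct{u})-D\vct{\rho}(\vct{v})=Dh(\vct{q}(\vct{u}))\bigl(D\vct{q}(\vct{u})-D\vct{q}(\vct{v})\bigr)+\bigl(Dh(\vct{q}(\vct{u}))-Dh(\vct{q}(\vct{v}))\bigr)D\vct{q}(\vct{v}).
\]
The first term is at most $17\norm{\vct{u}-\vct{v}}$. For the second term I need a dimension-free Lipschitz constant $M$ for $Dh$. Differentiating $Dh$ along a direction $\vct{w}$ gives terms like $(1+s)^{-3/2}(\vct{z}^\top\vct{w})$, $(1+s)^{-3/2}(\vct{w}\vct{z}^\top+\vct{z}\vct{w}^\top)$ and $(1+s)^{-5/2}(\vct{z}^\top\vct{w})\vct{z}\vct{z}^\top$, where $s=\norm{\vct{z}}^2$. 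Each is bounded by a small constant times $\norm{\vct{w}}$, uniformly in $\vct{z}$, since $\norm{\vct{z}}^k(1+s)^{-(k+1)/2}\le1$. This gives $M\le 3$, and then $\norm{Dh(\vct{q}(\vct{u}))-Dh(\vct{q}(\vct{v}))}\le M\cdot 3\norm{\vct{u}-\vct{v}}$. The second term is then at most $9M\norm{\vct{u}-\vct{v}}$, and the total is $17+9M\le 71$ if $M\le 6$, which is comfortable.

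The main obstacle is bounding $M$ cleanly and independently of the dimension. I would write the directional derivative of $Dh$ explicitly and apply the triangle inequality term by term, rather than attempt a sharp spectral computation. A second subtlety is that $\nu''$ exists only a.e. I would handle this by noting that $\nu'$ is piecewise polynomial and continuous, hence globally $6$-Lipschitz, so only Lipschitz arguments are used and no second derivatives of $\vct{q}$ are needed.
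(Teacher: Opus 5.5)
Your proposal is correct and follows essentially the same route as the paper. It uses the same entrywise increment bounds $6|u_i-v_i|+\tfrac94|u_{i+1}-v_{i+1}|$ on the bidiagonal Jacobian, then the same product-rule split through the normalization map, with $\|D\vct{n}\|_{\rm op}\le 1$ and a Lipschitz constant at most $6$ for $D\vct{n}$, giving $17+9\cdot 6=71$. The one difference is cosmetic: you bound the sparse matrices through the diagonal/superdiagonal splitting and the Frobenius norm (getting about $12.8$), while the paper applies the row/column-sparsity bound $\|M\|_{\rm op}\le\sqrt{\|M\|_1\|M\|_\infty}$ to both the Jacobian and its increment.
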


\section{The Hard Instance}\label{sec:hard}

In this section, we first construct the deterministic hard instance and then establish the properties needed to prove both the deterministic and stochastic lower bounds.

\subsection{Construction of the Hard Instance}
\label{subsec: deterministic hard instance}

The hard instance is constructed as the sum of primal and dual components. The primal component depends only on the primal variables and forms a chain along the primal coordinates. The dual component depends on both the primal and dual variables; it forms a chain along the dual coordinates while coupling the dual part to the primal chain~\cite{li2021complexity,ji2025lower,chen2025condition}.

\paragraph{Primal components.} Fix $T,N\ge2$. The primal variables are $\tilde{\vct{u}}\in\mathbb R^T,\,\,
\tilde{\vct{a}},\tilde{\vct{b}}\in\mathbb R^{T-1}$, with feasible domain
\begin{equation}\label{eq:C0}
 C_0:=\R^T\times\left\{(\tilde{\vct{a}},\tilde{\vct{b}})\in\R^{T-1}\times\R^{T-1}:
          \|\tilde{\vct{a}}\|^2+\|\tilde{\vct{b}}\|^2\le R^2\right\},
\end{equation}
where $R>0$ is a universal constant. Using the relay function $r$ defined in \eqref{eq:relay-maps}, define $\bar{\vct{r}}(\tilde{\vct{u}}):=(r(\tilde{u}_2),\ldots,r(\tilde{u}_T))$. Then, the primal component function $\Psi_0: \mathbb R^T\times \mathbb R^{T-1} \times \mathbb R^{T-1} \rightarrow \mathbb R$ is defined as
\begin{align}\label{eq:Psi0}
 \Psi_0(\tilde{\vct{u}},\tilde{\vct{a}},\tilde{\vct{b}})
 :=& H(\tilde{\vct{u}})+\frac12\norm{\tilde{\vct{a}}-\vct{\rho}(\tilde{\vct{u}})}^2
 +\frac12\norm{\tilde{\vct{b}}-\bar{\vct{r}}(\tilde{\vct{u}})}^2 +\frac12\bigl(\|\tilde{\vct{a}}\|^2+\|\tilde{\vct{b}}\|^2\bigr),
\end{align}
where $c_\eta>0$ is a universal constant. The function $H: \mathbb R^T \rightarrow \mathbb R$ is defined as
\[
H(\tilde{\vct{u}})
:=-c_\eta\tilde u_1-c_\eta\sum_{j=2}^T\nu(\tilde u_j)
  +\frac{c_\eta}{2}\sum_{j=1}^T r(\tilde u_j)^2.
\]

As illustrated in Figure~\ref{fig:zero_chain}, the two quadratic coupling
terms connect successive primal coordinates through the auxiliary 
variables $\tilde u_i$. The term
\(\frac12\|\tilde{\vct{a}}-\vct{\rho}(\tilde{\vct{u}})\|^2\)
couples \(\tilde u_i\) to \(\tilde a_i\) through
\(\rho_i(\tilde{\vct{u}})\), while
\(\frac12\|\tilde{\vct{b}}-\bar{\vct{r}}(\tilde{\vct{u}})\|^2\)
couples \(\tilde b_i\) to \(\tilde u_{i+1}\). In particular, when
\(\tilde u_{i+1}=0\), the latter term contributes
\(-\tilde b_i\) to the partial derivative with respect to
\(\tilde u_{i+1}\), since \(r(0)=0\) and \(r'(0)=1\).
The shifted coordinates in the definition of
\(\bar{\vct{r}}(\tilde{\vct{u}})\) therefore ensure that each auxiliary
variable \(\tilde b_i\) is coupled to the next primal coordinate.

The function \(H(\tilde{\vct{u}})\) ensures that this sequential
dependence leads to a stationarity lower bound. Its linear term
\(-c_\eta\tilde u_1\) produces a nonzero gradient in the first primal
coordinate at the origin. The terms involving \(\nu\) and \(r^2\)
are used to show that, at a point with sufficiently small constrained
stationarity residual, each coordinate of $\tilde{\vct{u}}$ must lie either near
zero or above a threshold close to one. The first coordinate must
lie above this threshold. If the terminal coordinate remains zero,
there must therefore be a transition between the two regions.
The coupling terms rule out a small stationarity residual at such
a transition, yielding the required lower bound before the terminal
primal coordinate is revealed.

\paragraph{Dual components.}
The dual variable consists of $T-1$ path blocks $\vct{y}^{(i)}\in\R^N$. Set $\alpha:=N^{-1/2}$. The matrix $B_{\alpha,N}$ in
\eqref{eq:path-matrix} satisfies the positive-definite identity in \eqref{eq:path-energy}.
For $a,b\in\R$, the function $h_N: (\mathbb R \times \mathbb R)\times \mathbb R^{N} \rightarrow \mathbb R$ is defined as
\begin{equation}\label{eq:path-block}
 h_N(a,b;\vct{y}):=\ell_0\left[-\frac12\vct{y}^\top B_{\alpha,N}\vct{y}
 +\left(\alpha a \vct{e}_1-\frac{\alpha}{2}b \vct{e}_N\right)^\top \vct{y}
 -\frac{\alpha^2(N-1)}8b^2\right],
\end{equation}
where \(\ell_0>0\) is a scaling parameter to be specified in the
lower-bound theorems.

The tridiagonal matrix \(B_{\alpha,N}\) couples neighboring dual coordinates, while the linear terms couple \(a\) to \(y_1\) and \(b\) to \(y_N\). Thus, first-order information propagates from one
endpoint to the other through the intervening dual coordinates.
Together with the primal coupling terms in \(\Psi_0\), these dependencies give the zero-chain structure shown in
Figure~\ref{fig:zero_chain}: a zero-respecting algorithm must reveal
a block of \(N\) dual coordinates before it can proceed to the next primal coordinate.

\paragraph{Rescaling and coupling the two components.}  For a scaling parameter \(s>0\), define the primal and dual feasible
sets by
\begin{equation}\label{eq:X0Y0}
 X_0
 :=
 \R^T\times
 \left\{
 (\vct{a},\vct{b})\in\R^{T-1}\times\R^{T-1}:
 \norm{\vct{a}}^2+\norm{\vct{b}}^2\le(Rs)^2
 \right\},
 \qquad
 Y_0
 :=
 \frac{\Dy}{2}\mathbb B_2^{(T-1)N}.
\end{equation}
The variables \(\vct{u}\) are unconstrained, whereas the auxiliary
variables \((\vct{a},\vct{b})\) lie in a Euclidean ball of radius
\(Rs\). This bound controls the norms of the unconstrained dual
maximizers of the path blocks. With the parameter choices specified
below, their concatenation lies in the interior of \(Y_0\) for every
feasible primal point. Consequently, restricting the dual
maximization to \(Y_0\) does not change its value.

For \(\vct{x}=(\vct{u},\vct{a},\vct{b})\in X_0\), define the
scaled variables
\begin{equation}\label{eq:normalized-coordinates}
 \tilde{\vct{u}}:=\vct{u}/s,
 \qquad
 \tilde{\vct{a}}:=\vct{a}/s,
 \qquad
 \tilde{\vct{b}}:=\vct{b}/s.
\end{equation}
Then
\((\tilde{\vct{u}},\tilde{\vct{a}},\tilde{\vct{b}})\in C_0\),
so the scaled primal domain is independent of \(s\).
The parameters \(\ell_0\) and \(s\) will be chosen to obtain the
required smoothness and stationarity bounds while satisfying the
dual feasibility condition.

We now define the hard instance $f:
 \bigl(\R^T\times\R^{T-1}\times\R^{T-1}\bigr)
 \times\R^{(T-1)N}
 \to\R$ by
\begin{equation}\label{eq:payoff}
 f(\vct{x},\vct{y})
 :=
 \ell_0s^2
 \Psi_0(\tilde{\vct{u}},\tilde{\vct{a}},\tilde{\vct{b}})
 +
 \sum_{i=1}^{T-1}h_N(a_i,b_i;\vct{y}^{(i)}).
\end{equation}
The minimax problem is defined by restricting \(f\) to
\(X_0\times Y_0\), and its primal value function is
\begin{equation}\label{eq:value-function}
 \Phi(\vct{x})
 :=
 \max_{\vct{y}\in Y_0}f(\vct{x},\vct{y}),
 \qquad \vct{x}\in X_0.
\end{equation}
For this instance, \(\Pcal_\lambda\) and \(G_\lambda\) denote the
proximal set and scaled proximal displacement defined in
\eqref{eq:prox-def}, with primal value function \(\Phi\) and feasible
set \(\mathcal X=X_0\).

\paragraph{Coordinate ordering.}
We order the joint coordinates according to the dependencies shown
in Figure~\ref{fig:zero_chain}. Starting from \(u_1\), each stage
passes through \(a_i\), the dual coordinates
\(y_1^{(i)},\ldots,y_N^{(i)}\), and \(b_i\) before reaching
\(u_{i+1}\). The total number of coordinates is
\begin{equation}\label{eq:K}
 K=1+(T-1)(N+3).
\end{equation}
Let \(\vct{\pi}=(\pi_1,\ldots,\pi_K)\) denote this ordering of
the coordinate labels. Explicitly,
\[
 \pi_{1+(i-1)(N+3)}=u_i,
 \qquad i=1,\ldots,T,
\]
and, for \(i=1,\ldots,T-1\) and \(j=1,\ldots,N\),
\[
 \pi_{2+(i-1)(N+3)}=a_i,
 \qquad
 \pi_{2+(i-1)(N+3)+j}=y_j^{(i)},
 \qquad
 \pi_{N+3+(i-1)(N+3)}=b_i.
\]
The corresponding ordering is also used for the normalized primal
coordinates. Since \(s>0\), normalization preserves their support.

For a joint vector $\vct{v}$, let $[\vct{v}]_{\pi_r}$ denote
its coordinate at position $r$ in the ordering $\vct{\pi}$
illustrated in Figure~\ref{fig:zero_chain}.
For $\theta\ge0$, define
\begin{equation}\label{eq:chain-progress}
\prog^{\vct{\pi}}_\theta(\vct{v})
:=
\max\Bigl(
\{r\in\{1,\ldots,K\}:|[\vct{v}]_{\pi_r}|>\theta\}
\cup\{0\}
\Bigr).
\end{equation}
For a primal vector, we apply this definition after adjoining zero dual components, retaining the same joint coordinate ordering.

\begin{figure}[t]
    \centering
    \includegraphics[width=1.0\linewidth]{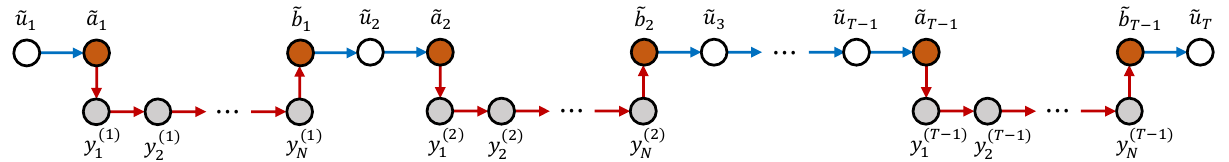}
    \caption{Zero-chain structure of the hard instance in \eqref{eq:payoff}. The primal coordinates $\tilde{u}_i,\tilde{a}_i,\tilde{b}_i$ are connected through dual path blocks $y^{(i)}_1,\ldots,y^{(i)}_N$, so information must propagate sequentially through each dual path before the next primal relay coordinate can be revealed.
    }
    \label{fig:zero_chain}
\end{figure}

\subsection{Structural Properties}

We next establish the properties of the hard instance needed for the
lower-bound analysis. These include an exact expression for the primal
value function and its initial gap, a smoothness bound independent of
the problem dimensions, and a stationarity lower bound when the terminal
primal coordinate is zero.

We first analyze the dual maximization problem and the resulting
primal value function. To state the value-function formula, define
$\Psi:\mathbb R^T\times\mathbb R^{T-1}\times
\mathbb R^{T-1}\to\mathbb R$ by
\begin{align}\label{eq:Psi}
\Psi(\tilde{\vct{u}},\tilde{\vct{a}},\tilde{\vct{b}})
:=
\Psi_0(\tilde{\vct{u}},\tilde{\vct{a}},\tilde{\vct{b}})
+\frac12
\norm{\tilde{\vct{a}}-\frac12\tilde{\vct{b}}}^2.
\end{align}
The following lemma shows that, under the stated feasibility
condition, the unconstrained dual maximizer is unique and lies
in the interior of $Y_0$ for every feasible primal point.
Thus, restricting the dual maximization to $Y_0$ leaves its
value unchanged.
The lemma expresses $\Phi$ as a scaled version of $\Psi$
and computes the exact initial value gap, which will be used
to choose parameters satisfying the gap bound of the target
function class.
The proof is given in Appendix~\ref{app:gap}.

\begin{lemma}\label{lem:gap}
Assume $2N^2R^2s^2\le(\Dy/4)^2$. 
Then the hard instance \eqref{eq:payoff} satisfies:
\begin{enumerate}[label=\arabic*.]
\item For every $\vct{x}\in X_0$, the function $f(\vct{x},\vct{y})$ is strictly concave in $\vct{y}$, and its unconstrained maximizer belongs to the interior of $Y_0$.
\item The constrained value function has the exact representation
\begin{equation}\label{eq:exact-value}
       \Phi(\vct{u},\vct{a},\vct{b})=\ell_0s^2\Psi(\vct{u}/s,\vct{a}/s,\vct{b}/s).
\end{equation}
\item The initial gap is
\begin{equation}\label{eq:exact-gap}
       \Phi(0)-\inf_{\vct{x}\in X_0}\Phi(\vct{x})
       =c_\eta\left(T+\frac12\right)\ell_0s^2.
\end{equation}
\end{enumerate}
\end{lemma}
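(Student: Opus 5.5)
The plan is to solve the dual maximization block by block in closed form, using an explicit formula for $B_{\alpha,N}^{-1}$. The payoff \eqref{eq:payoff} is separable across the dual blocks $\vct{y}^{(i)}$, and the primal part $\ell_0s^2\Psi_0$ does not depend on $\vct{y}$. So it suffices to maximize each $h_N(a_i,b_i;\cdot)$ separately.

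\textbf{Strict concavity and the maximizer.} Writing $B_{\alpha,N}=\alpha^2\vct{e}_1\vct{e}_1^\top+D^\top D$, with $D$ the forward-difference matrix, \eqref{eq:path-energy} shows that $B_{\alpha,N}$ is positive definite. Since $\ell_0>0$, each block, and hence $f(\vct{x},\cdot)$, is strictly concave. Let $\vct{c}:=\alpha a\vct{e}_1-\tfrac{\alpha}{2}b\vct{e}_N$. The unique unconstrained maximizer of a block is $\vct{y}^\star=B_{\alpha,N}^{-1}\vct{c}$, and the maximal value is
\[
\ell_0\Bigl[\tfrac12\vct{c}^\top B_{\alpha,N}^{-1}\vct{c}-\tfrac{\alpha^2(N-1)}{8}b^2\Bigr].
\]
I will verify directly that
\[
(B_{\alpha,N}^{-1})_{jk}=\alpha^{-2}+\min(j,k)-1
\]
by multiplying against $B_{\alpha,N}$ (a standard path-Laplacian Green's function computation).

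\textbf{Block value.} This formula gives $(B^{-1})_{11}=(B^{-1})_{1N}=\alpha^{-2}$ and $(B^{-1})_{NN}=\alpha^{-2}+N-1$. Hence
\[
\vct{c}^\top B_{\alpha,N}^{-1}\vct{c}=a^2-ab+\tfrac{b^2}{4}+\tfrac{\alpha^2(N-1)}{4}b^2 .
\]
The $b^2$ correction cancels the last term of \eqref{eq:path-block} exactly, so the block value is $\tfrac{\ell_0}{2}(a-b/2)^2$. Summing over blocks with $a_i=s\tilde a_i$ and $b_i=s\tilde b_i$ gives
\[
\max_{\vct{y}}f=\ell_0s^2\Bigl[\Psi_0+\tfrac12\|\tilde{\vct{a}}-\tfrac12\tilde{\vct{b}}\|^2\Bigr]=\ell_0s^2\Psi,
\]
which is \eqref{eq:exact-value}, provided the maximizer lies in $Y_0$.

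\textbf{Interiority (the main technical step).} The Green's function formula gives
\[
y^\star_j=a/\alpha-\tfrac b2\bigl(\alpha^{-1}+\alpha(j-1)\bigr).
\]
With $\alpha=N^{-1/2}$ we have $\alpha^{-1}=\sqrt N$ and $\alpha(j-1)\le\sqrt N$. Therefore $|y^\star_j|\le\sqrt N(|a|+|b|)$, and
\[
\|\vct{y}^{\star(i)}\|^2\le N^2(|a_i|+|b_i|)^2\le 2N^2(a_i^2+b_i^2).
\]
Summing over $i$ and using $\|\vct{a}\|^2+\|\vct{b}\|^2\le(Rs)^2$ gives
\[
\|\vct{y}^\star\|^2\le 2N^2R^2s^2\le(\Dy/4)^2<(\Dy/2)^2 .
\]
So the maximizer is interior to $Y_0$, and the constrained and unconstrained maxima coincide. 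This proves items 1 and 2.

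\textbf{Exact gap.} First, $\Phi(0)=\ell_0s^2\Psi(0)=0$, since $\nu(0)=r(0)=0$ makes $\vct{\rho}(0)=0$ and $\bar{\vct r}(0)=0$. It then suffices to show that $\inf_{C_0}\Psi=-c_\eta(T+\tfrac12)$ and that it is attained. For the lower bound, all quadratic terms in $\Psi$ are nonnegative, so I bound $H$ coordinatewise.
\begin{itemize}
\item For $j\ge2$: $-\nu(t)+\tfrac12r(t)^2\ge-1$, since $\nu\le1$.
\item For $j=1$: $-t+\tfrac12r(t)^2\ge-\tfrac32$. For $t\ge1$ this is $-t+\tfrac12(1-t)^2$, minimized at $t=2$. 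For $t<1$ the expression exceeds $-1$.
\end{itemize}
Together these give $H\ge-c_\eta(\tfrac32+T-1)=-c_\eta(T+\tfrac12)$.

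For attainment, take $\tilde u_1=2$, $\tilde u_j=1$ for $j\ge2$, and $\tilde{\vct a}=\tilde{\vct b}=0$, which is feasible. Then $r(\tilde u_j)=0$ for $j\ge2$, so $\bar{\vct r}=0$. Also $\nu(\tilde u_{i+1})=1$, so every $q_i=0$ and $\vct\rho=0$. Hence all quadratic terms vanish and $H$ attains its bound. Multiplying by $\ell_0s^2$ gives \eqref{eq:exact-gap}.
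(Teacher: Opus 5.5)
Your proposal is correct and follows the paper's proof essentially step for step: the Green's-function formula $(B_{\alpha,N}^{-1})_{jk}=\alpha^{-2}+\min\{j,k\}-1$, the block value $\frac{\ell_0}{2}(a-b/2)^2$, the entrywise bound giving $\|\vct{y}^\star\|^2\le 2N^2R^2s^2$, and the coordinatewise lower bound on $H$ attained at $\tilde{\vct{u}}=(2,1,\ldots,1)$, $\tilde{\vct{a}}=\tilde{\vct{b}}=\vct{0}$. Your decomposition $B_{\alpha,N}=\alpha^2\vct{e}_1\vct{e}_1^\top+D^\top D$ simply makes explicit the positive definiteness that the paper asserts directly.
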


We next verify joint smoothness of the hard instance. The following
lemma bounds the Lipschitz constant of \(\nabla f\) by \(C_{\ell}\ell_0\), independently of the chain lengths \(T,N\) and the scaling parameter \(s\). Thus, choosing \(\ell_0=L/C_{\ell}\)
ensures that \(f\) is jointly \(L\)-smooth, while leaving \(T,N\)
and \(s\) available to satisfy the remaining requirements of the
construction. The proof is given in Appendix~\ref{app:smooth}.

\begin{lemma}\label{lem:smooth}
For every $T,N\ge2$, every $s>0$, and every $\ell_0>0$, the hard instance \eqref{eq:payoff} satisfies
\begin{equation}\label{eq:smooth-base}
   \norm{\nabla f(\vct{z})-\nabla f(\vct{z}')}
   \le C_{\ell}\ell_0\norm{\vct{z}-\vct{z}'}
   \qquad\text{for all }\vct{z},\vct{z}'\in X_0\times Y_0,
\end{equation}
for a universal constant $C_{\ell}>0$.
\end{lemma}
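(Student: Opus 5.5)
We split $f$ into its primal part $F_1(\vct{x}):=\ell_0 s^2\Psi_0(\vct{x}/s)$ and its dual part $F_2(\vct{x},\vct{y}):=\sum_{i}h_N(a_i,b_i;\vct{y}^{(i)})$. We bound the Lipschitz constant of each gradient separately and add the bounds.

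\textbf{Primal part.} By the chain rule, $\nabla F_1(\vct{x})=\ell_0 s\,\nabla\Psi_0(\vct{x}/s)$. Hence
\[
\norm{\nabla F_1(\vct{x})-\nabla F_1(\vct{x}')}\le \ell_0\,\mathrm{Lip}(\nabla\Psi_0)\,\norm{\vct{x}-\vct{x}'},
\]
and the scaling $s$ drops out. It therefore suffices to show that $\nabla\Psi_0$ is Lipschitz on $C_0$ with a constant independent of $T$. We treat the terms of $\Psi_0$ one at a time.

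\begin{enumerate}[label=(\roman*)]
\item \emph{The function $H$.} It is separable in $\tilde{\vct{u}}$. Its Hessian, which exists a.e., is diagonal with entries bounded by $c_\eta(|\nu''|+|(r^2)''|)$. Here $(r^2)''=2r'^2+2rr''$, and $r$ is bounded by $1/4$ on $(0,1)$, while $|r''|=0$ outside $(0,1)$. By Lemma~\ref{lemma:relay-properties} this gives a uniform constant.
\item \emph{The quadratic $\frac12(\|\tilde{\vct{a}}\|^2+\|\tilde{\vct{b}}\|^2)$.} This term is trivially $1$-smooth.
\item \emph{The coupling term $\frac12\|\tilde{\vct{b}}-\bar{\vct{r}}(\tilde{\vct{u}})\|^2$.} Its gradient is $\bigl(-D\bar{\vct{r}}^\top(\tilde{\vct{b}}-\bar{\vct{r}}),\,\tilde{\vct{b}}-\bar{\vct{r}}\bigr)$, which is coordinatewise separable in the pairs $(\tilde b_i,\tilde u_{i+1})$. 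Each scalar map $(b,u)\mapsto -r'(u)(b-r(u))$ has a derivative bounded using $|r'|\le1$ and $|r''|\le 2$, together with $|b|\le R$ on $C_0$ and $|r'r'|\le 1$. The product $r''r$ needs care because $r$ is unbounded for $t\le0$ and $t\ge1$. However, $r''=0$ there, so $|r''r|\le 2\cdot\frac14$.
\item \emph{The coupling term $\frac12\|\tilde{\vct{a}}-\vct{\rho}(\tilde{\vct{u}})\|^2$.} Its $\tilde{\vct{u}}$-gradient is $-D\vct{\rho}(\tilde{\vct{u}})^\top(\tilde{\vct{a}}-\vct{\rho})$. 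Splitting the difference at two points, we bound
\[
\norm{D\vct{\rho}(\vct{u})-D\vct{\rho}(\vct{v})}_{\rm op}\,\norm{\tilde{\vct{a}}-\vct{\rho}}
+\norm{D\vct{\rho}}_{\rm op}\bigl(\norm{\tilde{\vct{a}}-\tilde{\vct{a}}'}+\norm{\vct{\rho}(\vct{u})-\vct{\rho}(\vct{v})}\bigr).
\]
Here $\norm{\tilde{\vct{a}}-\vct{\rho}}\le R+1$ by \eqref{eq:rho-bound} and the constraint defining $C_0$. Lemma~\ref{lemma:relay-derivatives} then gives the bound $71(R+1)+3+9$.
\end{enumerate}

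This step is the main point where the bounded domain $C_0$ is genuinely needed. It is also why the normalized $\vct{\rho}$ with $\norm{\vct{\rho}}\le1$ was introduced: without that normalization, the factor $\norm{\tilde{\vct{a}}-\vct{\rho}}$ would grow with $T$.

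\textbf{Dual part.} $F_2$ is a quadratic form in $(\vct{a},\vct{b},\vct{y})$, so its gradient is linear. Its Lipschitz constant is $\ell_0$ times the operator norm of the block-diagonal Hessian; the blocks are indexed by $i$ and each is of size $N+2$. Each block consists of the following pieces:
\begin{itemize}
\item $-B_{\alpha,N}$, which has $\|B_{\alpha,N}\|_{\rm op}\le 4+\alpha^2\le5$ by Gershgorin;
\item the off-diagonal couplings $\alpha$ and $-\alpha/2$, which are bounded by $1$;
\item the $b$-diagonal entry $-\alpha^2(N-1)/4$, which is at most $1/4$ in absolute value since $\alpha^2=1/N$.
\end{itemize}
So each block has operator norm at most an absolute constant, for instance $7$. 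The block-diagonal Hessian therefore has norm at most $7$, uniformly in $T$ and $N$.

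Summing the two parts gives $\norm{\nabla f(\vct{z})-\nabla f(\vct{z}')}\le C_\ell\ell_0\norm{\vct{z}-\vct{z}'}$, with $C_\ell$ depending only on $c_\eta$ and $R$, both universal. For the pieces that are only $C^{1,1}$ rather than $C^2$, we justify the Lipschitz bounds by integrating the a.e.\ second derivatives along segments. This is valid because $C_0$ is convex, and because $\nu'$ and $r'$ are Lipschitz.
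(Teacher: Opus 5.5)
Your proposal is correct and follows essentially the same route as the paper. Both arguments use the chain-rule scaling that removes $s$ and the split of $\Psi_0$ into $H$, the two coupling terms and the quadratic. In both, $\norm{\vct{\rho}}\le1$, $|\tilde b_i|\le R$ and $r''=0$ off $[0,1]$ control the coupling terms. Both finish with a dimension-free operator-norm bound on the block-diagonal dual Hessian: you get $7$ by a triangle inequality, while the paper gets $4$ from row sums. The only omission is the $\tilde{\vct{a}}$-component $\tilde{\vct{a}}-\vct{\rho}(\tilde{\vct{u}})$ of the gradient of $\frac12\norm{\tilde{\vct{a}}-\vct{\rho}(\tilde{\vct{u}})}^2$, and it is trivially Lipschitz with constant at most $1+3$.
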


The remaining step is to relate the zero-chain structure to
stationarity of the primal value function. Specifically, we show
that the norm of the Moreau-envelope gradient remains bounded away
from zero whenever the terminal primal coordinate is zero.
To establish this bound, we first analyze the normalized primal
function \(\Psi\) on \(C_0\). Define its constrained stationarity
residual by
\begin{equation}\label{eq:normal-residual}
 \Rcal_{C_0}(\tilde{\vct{u}},\tilde{\vct{a}},\tilde{\vct{b}})
 :=
 \dist\Bigl(
 0,\nabla\Psi(\tilde{\vct{u}},\tilde{\vct{a}},\tilde{\vct{b}})
 +N_{C_0}(\tilde{\vct{u}},\tilde{\vct{a}},\tilde{\vct{b}})
 \Bigr).
\end{equation}
This residual measures the violation of the first-order optimality
condition for minimizing \(\Psi\) over \(C_0\).

The following lemma first verifies that \(f\) is a first-order
zero-chain with respect to the ordering \(\vct{\pi}\) mentioned in Section \ref{subsec: deterministic hard instance}. It then
establishes a uniform positive lower bound on
\(\Rcal_{C_0}\) whenever \(\tilde u_T\le1/4\).
Using the optimality conditions for the proximal problem, this
bound yields a lower bound on the Moreau-envelope gradient norm
at every feasible primal point with \(u_T=0\).
Since \(u_T\) is the last coordinate in the ordering, the zero-chain
property ensures that the same bound applies to every deterministic
zero-respecting output produced in fewer than \(K\) oracle calls.
The proof is given in Appendix~\ref{app:zero-chain}.
\begin{lemma}\label{lem:gradient-lower}
Assume $2N^2R^2s^2\le(\Dy/4)^2$, and set
\begin{equation}\label{eq:Lbar}
       \bar L:=C_{\ell}\ell_0.
\end{equation}
Then the hard instance has the following properties.
\begin{enumerate}[label=\arabic*.]
\item The function \(f\) is a first-order zero-chain with respect
to the coordinate ordering \(\vct{\pi}\).
\item For every $(\tilde{\vct{u}},\tilde{\vct{a}},\tilde{\vct{b}})\in C_0$ with $\tilde{u}_T\le1/4$,
\begin{equation}\label{eq:normalized-obstruction}
       \Rcal_{C_0}(\tilde{\vct{u}},\tilde{\vct{a}},\tilde{\vct{b}})\ge C_\delta,
\end{equation}
for a universal constant $C_\delta>0$.
\item For every primal point $\vct{x}=(\vct{u},\vct{a},\vct{b})\in X_0$ with $u_T=0$ and every
$\vct{p}\in\Pcal_{1/(2\bar L)}(\vct{x})$,
\begin{equation}\label{eq:prox-lower-base}
       \norm{G_{1/(2\bar L)}(\vct{x};\vct{p})}
       >\frac{C_\delta}{2}\ell_0s.
\end{equation}
Consequently, the same bound holds for every deterministic zero-respecting
output produced in fewer than $K$ oracle calls.
\end{enumerate}
\end{lemma}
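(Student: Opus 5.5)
The proof splits along the three items of Lemma~\ref{lem:gradient-lower}.

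\emph{Item 1 (zero-chain).} I would compute every partial derivative of $f$ and check that, whenever $\prog^{\vct{\pi}}_0(\vct{z})=r$, no coordinate at a position beyond $r+1$ is nonzero. Write $\tilde{\vct{u}},\tilde{\vct{a}},\tilde{\vct{b}}$ for the scaled variables. The checks go block by block.
\begin{itemize}
\item $\partial_{\tilde a_i}=\tilde a_i-\rho_i(\tilde{\vct{u}})+\tilde a_i$. It can be nonzero only if $\tilde a_i\neq0$ or $q_i\neq0$, and $q_i\neq 0$ requires $\tilde u_i>0$.
\item $\partial_{y^{(i)}_1}$ involves only $y^{(i)}_1,y^{(i)}_2,a_i$. $\partial_{y^{(i)}_j}$ involves only its neighbors, and $\partial_{y^{(i)}_N}$ involves only $y^{(i)}_{N-1},y^{(i)}_N,b_i$.
\item $\partial_{\tilde b_i}$ contains $\tilde b_i-r(\tilde u_{i+1})+\tilde b_i$, plus the dual terms $-\tfrac{\alpha}{2}y^{(i)}_N-\tfrac{\alpha^2(N-1)}4 b_i$. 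Each term is nonzero only if $b_i$, $u_{i+1}$ or $y^{(i)}_N$ is nonzero.
\item $\partial_{\tilde u_{i+1}}$ contains $-c_\eta\nu'+c_\eta r r'$ terms, which vanish at $\tilde u_{i+1}=0$. It also contains $\langle \tilde{\vct{a}}-\vct\rho,\partial_{i+1}\vct\rho\rangle$, which vanishes since the column of $D\vct\rho$ is zero there, and $-(\tilde b_i-r(\tilde u_{i+1}))r'(\tilde u_{i+1})=-\tilde b_i$. So this partial is nonzero at $u_{i+1}=0$ only if $b_i\ne0$.
\item The derivative of $\rho_j$ for $j>i$ with respect to earlier coordinates vanishes whenever $\tilde u_j=0$.
\end{itemize}
Together with the ordering $\vct{\pi}$, these checks give $\prog_0(\nabla f)\le\prog_0+1$.

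\emph{Item 2 (main obstacle).} This is the heart of the lemma. Suppose $\Rcal_{C_0}<C_\delta$ for a small universal $C_\delta$. The normal cone to $C_0$ is trivial in $\tilde{\vct{u}}$, and in $(\tilde{\vct{a}},\tilde{\vct{b}})$ it is a nonnegative multiple of $(\tilde{\vct{a}},\tilde{\vct{b}})$. Hence $|\partial_{\tilde u_j}\Psi|<C_\delta$ for every $j$.

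\emph{Step 2a.} The $\tilde{\vct{a}},\tilde{\vct{b}}$ residuals say $(2+\mu)\tilde{\vct{a}}+(\tilde{\vct{a}}-\tfrac12\tilde{\vct{b}})\approx\vct\rho$ and $(2+\mu)\tilde{\vct{b}}-\tfrac12(\tilde{\vct{a}}-\tfrac12\tilde{\vct{b}})\approx\bar{\vct{r}}$ for some $\mu\ge0$. I would use these to express $\tilde a_i,\tilde b_i$ approximately as linear combinations of $\rho_i$ and $r(\tilde u_{i+1})$, with coefficients of size $\Theta(1/(2+\mu))$.

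\emph{Step 2b.} Substitute into $\partial_{\tilde u_j}\Psi$. At $j=1$, the term $-c_\eta+c_\eta r r'$ together with the $\rho$ terms should force $\tilde u_1\ge1-2\kappa$. Here I would rely on $c_\eta$ being chosen large relative to the coupling terms, which are $O(1)$ uniformly because $\|\vct\rho\|\le1$ and $\|\tilde{\vct{b}}\|\le R$.

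For $j\ge2$, the terms $-c_\eta\nu'(\tilde u_j)+c_\eta r(\tilde u_j)r'(\tilde u_j)$ equal $c_\eta r(\tilde u_j)(r'(\tilde u_j)-6)$ on $(0,1)$. These are bounded away from zero unless $r(\tilde u_j)$ is small, so each $\tilde u_j$ is within $2\kappa$ of $\{0\}$ or above $1-2\kappa$. Values outside $[0,1]$ are handled by the signs of the linear pieces of $r$.

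\emph{Step 2c.} Since $\tilde u_1\ge1-2\kappa$ and $\tilde u_T\le 1/4$, hence $\tilde u_T\le2\kappa$ by the dichotomy, there is a first index $i$ with $\tilde u_i\ge1-2\kappa$ and $\tilde u_{i+1}\le2\kappa$. There $q_i\ge (1-2|r|)^2\approx1$ by Lemma~\ref{lemma:relay-properties}. At $\tilde u_{i+1}$ near $0$, the $c_\eta$ terms are $O(|r|)$, while the coupling contributes about $-\tilde b_i r'(\tilde u_{i+1})$ with $r'\ge1-4\kappa$.

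Step 2a then gives $\tilde b_i\gtrsim \rho_i/(\text{const})$. The key difficulty is that $\rho_i=q_i/\sqrt{1+\|\vct q\|^2}$ may be small when many transitions exist. I would handle this by choosing the transition that maximizes $q_i$, or by showing that the transitions all have comparable $q$, and by summing the squared residuals over all transitions to use $\|\vct q\|$. I expect this normalization bookkeeping to be the hardest part.

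\emph{Item 3.} Let $\lambda=1/(2\bar L)$ and $\vct{p}\in\Pcal_\lambda(\vct x)$. By Lemma~\ref{lem:gap}, $\Phi=\ell_0 s^2\Psi(\cdot/s)$, and Lemma~\ref{lem:weak-convexity} applies since $\Phi$ is $\bar L$-weakly convex. The optimality condition of the prox problem reads
\[
0\in \ell_0 s\nabla\Psi(\vct p/s)+N_{C_0}(\vct p/s)+\lambda^{-1}(\vct p-\vct x).
\]
This gives $\|G_\lambda\|\ge \ell_0 s\,\Rcal_{C_0}(\vct p/s)$, so it suffices to show $\tilde p_T\le1/4$. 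If instead $\tilde p_T>1/4$, then $\|\vct p-\vct x\|\ge s/4$ because $u_T=0$, and hence $\|G_\lambda\|\ge 2\bar L s/4=C_\ell\ell_0 s/2$. Choosing $C_\delta<C_\ell$, both cases give the bound $>C_\delta\ell_0 s/2$.

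Finally, Item 1 implies that after fewer than $K$ calls every zero-respecting output has $u_T=0$, because $u_T$ is the last coordinate in $\vct\pi$ and projection onto $X_0$ preserves support.
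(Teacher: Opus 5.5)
Your Items 1 and 3, and the first half of Item 2 (the low/high dichotomy and $\tilde u_1\ge 1-2\kappa$), match the paper's argument. The gap is Step 2c, which is the heart of the lemma and which you leave open; the coordinatewise formulation you adopt cannot close it. From $|\partial_{\tilde u_j}\Psi|<C_\delta$ for each $j$ you get only $\|\bar{\vct r}(\tilde{\vct u})\|_\infty\le\kappa$, so $\|\vct q_{J^c}(\tilde{\vct u})\|$ is not bounded independently of $T$. Moreover, with many high-to-low transitions (e.g.\ an alternating high/low pattern), each $\rho_i$ is of order $|J|^{-1/2}$. Either way, every individual transition coordinate can have residual far below $C_\delta$. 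Hence maximizing $q_i$ over transitions gives no contradiction, and summing squared residuals over the transitions alone fails unless the denominator $\sqrt{1+\|\vct q\|^2}$ is controlled. The paper closes this in three steps.
\begin{enumerate}
\item[(i)] Keep the Euclidean bound $\|\nabla_{\tilde{\vct u}}\Psi\|<C_\delta$ (the normal cone of $C_0$ has no $\tilde{\vct u}$-component). Splitting the tail coordinates into those in $(0,1)$ and those outside gives $\|\bar{\vct r}(\tilde{\vct u})\|\le\kappa$ in $\ell_2$.
\item[(ii)] For $i\notin J$, either $\tilde u_i$ is low and $q_i\le\nu(\tilde u_i)\le 2|r(\tilde u_i)|$, or $\tilde u_{i+1}$ is high and $q_i\le 1-\nu(\tilde u_{i+1})\le 2|r(\tilde u_{i+1})|$. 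No coordinate is both low and high, so this assignment is injective. Hence $\|\vct q_{J^c}\|\le 2\kappa$ and $\|\vct\rho_J\|^2\ge(1-4\kappa)^2/(1+(1-4\kappa)^2+4\kappa^2)$, uniformly in $T$ and $|J|$.
\item[(iii)] Test $\nabla_{\tilde{\vct u}}\Psi$ against the single direction $\vct v$ with $v_{i+1}=\rho_i$ for $i\in J$ and $v_j=0$ otherwise. Substituting $\tilde{\vct b}\approx\frac1{13}\vct\rho+\frac6{13}\bar{\vct r}$ gives a leading term at most $-\frac{1-4\kappa}{13}\|\vct\rho_J\|^2$. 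Also $\|D\vct\rho(\tilde{\vct u})\vct v\|\le 12\kappa\|\vct v\|$: the indices in $J$ are pairwise nonadjacent, so the columns of $D\vct q$ at the low coordinates $i+1$ have disjoint supports and entries at most $6\kappa$. Comparing with $|D_{\tilde{\vct u}}\Psi[\vct v]|<C_\delta\|\vct v\|$ gives the contradiction.
\end{enumerate}

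You also mishandle the $c_\eta$-terms at the low coordinate $\tilde u_{i+1}$: they are not a negligible $O(|r|)$ error. Their size is about $5c_\eta|r(\tilde u_{i+1})|$. The bound $|r(\tilde u_{i+1})|\le\kappa$ from the dichotomy only limits this to about $5c_\eta\kappa$, which is of order $R$ (roughly $9R$ with the paper's $\kappa$). That is far larger than the coupling signal, which is at most about $1/13$. The argument works only because these terms are nonpositive on low coordinates ($c_\eta t$ for $t\le 0$, and $-c_\eta(5+2t)r(t)$ for $0<t\le 2\kappa$). Hence $DH(\tilde{\vct u})[\vct v]\le 0$ reinforces the coupling rather than competing with it. Finally, in Step 2a you should not carry the multiplier $\mu$: coefficients $\Theta(1/(2+\mu))$ would weaken your lower bound on $\tilde b_i$ without a bound on $\mu$. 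Instead, note that the $(\tilde{\vct a},\tilde{\vct b})$-gradient of $\Psi$ is $2$-strongly monotone and its unconstrained zero $(\tilde{\vct a}^*,\tilde{\vct b}^*)$ lies strictly inside the radius-$R$ ball. The normal-cone inequality then gives $\|(\tilde{\vct a},\tilde{\vct b})-(\tilde{\vct a}^*,\tilde{\vct b}^*)\|\le C_\delta/2$ directly.
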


\section{Lower Bound for Deterministic NC-C Minimax Optimization}\label{sec:scaling}
We now derive the deterministic first-order oracle lower bound by
specifying the parameters of the hard instance in \eqref{eq:payoff}.
Using Lemmas~\ref{lem:gap}--\ref{lem:gradient-lower}, we choose
\(\ell_0,s,T,N\) so that the instance satisfies the prescribed
smoothness, initial-gap, and dual-diameter bounds, while the norm
of the Moreau-envelope gradient exceeds \(\eps\) at every feasible
primal point with \(u_T=0\). The zero-chain property then implies
that a zero-respecting algorithm must reveal the terminal primal
coordinate before returning an \(\eps\)-stationary point. Counting
the oracle calls required to reach this coordinate gives the
following theorem. Its proof is given in
Appendix~\ref{app:deterministic-theorem}.
\begin{theorem}[Deterministic zero-respecting lower bound]
\label{thm:main}
There exist universal constants \(c_0,c_1>0\) such that, for every
\(L,\Del,\Dy>0\) and every \(\eps\) satisfying
\begin{equation}\label{eq:small-eps}
 0<\eps\le
 c_1\min\{\sqrt{L\Del},\,L\Dy\},
\end{equation}
there is an instance
\(f\in\mathcal F_{\rm NCC}(L,\Del,\Dy)\) on finite-dimensional
closed convex sets \(X_0,Y_0\), with
\(\operatorname{diam}(Y_0)=\Dy\), for which the following lower
bound holds. Let $\Phi(\vct{x})
 :=
 \max_{\vct{y}\in Y_0}f(\vct{x},\vct{y})$ and $\varphi:=\Phi+\iota_{X_0}$. For any deterministic zero-respecting first-order algorithm,
every zero-respecting primal output \(\vct{x}\in X_0\) produced
after \(q\) joint oracle calls with
\begin{equation}\label{eq:q-lower}
 q<c_0L^2\Dy\Del\,\eps^{-3}
\end{equation}
satisfies
\begin{equation}\label{eq:eps-prox-lower}
 \norm{\nabla\varphi_{1/(2L)}(\vct{x})}
 =
 \norm{G_{1/(2L)}(\vct{x};\vct{p})}
 >
 \eps,
\end{equation}
where \(\vct{p}\) is the unique point in
\(\Pcal_{1/(2L)}(\vct{x})\).
Consequently, the deterministic zero-respecting first-order
oracle complexity of finding an \(\eps\)-stationary point is
\[
 \Omega\!\left(L^2\Dy\Del\,\eps^{-3}\right).
\]
\end{theorem}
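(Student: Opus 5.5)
The plan is to instantiate the hard instance \eqref{eq:payoff} with parameters tuned to the target class and then combine Lemmas~\ref{lem:gap}, \ref{lem:smooth} and \ref{lem:gradient-lower}.

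\textbf{Smoothness and Moreau parameter.} Set \(\ell_0:=L/C_\ell\). Lemma~\ref{lem:smooth} then gives joint \(L\)-smoothness, and \(\bar L=C_\ell\ell_0=L\) in \eqref{eq:Lbar}. Thus \(G_{1/(2\bar L)}\) is exactly the Moreau-envelope gradient \(\nabla\varphi_{1/(2L)}\), and Lemma~\ref{lem:weak-convexity} gives uniqueness of \(\vct p\).

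\textbf{Stationarity scale.} Choose \(s:=2\eps C_\ell/(C_\delta L)\), so that \(\tfrac{C_\delta}{2}\ell_0 s=\eps\). Part~3 of Lemma~\ref{lem:gradient-lower} then gives \(\norm{G}>\eps\) at every feasible \(\vct x\) with \(u_T=0\).

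\textbf{Remaining constraints.} Two conditions are left.
\begin{enumerate}
\item The dual feasibility condition \(2N^2R^2s^2\le(\Dy/4)^2\) holds if we take \(N:=\lfloor \Dy/(4\sqrt2 Rs)\rfloor\), which is of order \(L\Dy/\eps\). We need \(N\ge2\), and this is guaranteed by the condition \(\eps\le c_1 L\Dy\) with \(c_1\) small.
\item The gap condition, via \eqref{eq:exact-gap}, requires \(c_\eta(T+\tfrac12)\ell_0 s^2\le\Del\). Since \(\ell_0 s^2\asymp \eps^2/L\), take \(T:=\lfloor \Del/(c_\eta\ell_0 s^2)-\tfrac12\rfloor\), which is of order \(L\Del/\eps^2\). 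Here \(T\ge2\) follows from \(\eps\le c_1\sqrt{L\Del}\).
\end{enumerate}
Also take \(Y_0=\tfrac{\Dy}{2}\B_2^{(T-1)N}\), which has diameter exactly \(\Dy\); concavity in \(\vct y\) is part~1 of Lemma~\ref{lem:gap}. Hence \(f\in\mathcal F_{\rm NCC}(L,\Del,\Dy)\).

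\textbf{Counting oracle calls.} By part~1 of Lemma~\ref{lem:gradient-lower}, \(f\) is a zero-chain in the ordering \(\vct\pi\). The feasible sets (a product of \(\R^T\), a Euclidean ball in \((\vct a,\vct b)\), and a ball in \(\vct y\)) have projections that only rescale and so preserve supports. Hence after \(q\) calls every query and zero-respecting output has \(\prog_0^{\vct\pi}\le q\). Since \(u_T\) sits at position \(K=1+(T-1)(N+3)\), any output produced after \(q<K\) calls has \(u_T=0\), and so has Moreau gradient norm \(>\eps\). Finally,
\[
K\ge (T-1)N\gtrsim \frac{L\Del}{\eps^2}\cdot\frac{L\Dy}{\eps}=\frac{L^2\Dy\Del}{\eps^3}.
\]
The floor functions cost only constant factors once \(T,N\ge2\), for instance via \(\lfloor z\rfloor\ge z/2\) for \(z\ge1\) and \(T-1\ge T/2\). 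This gives \(c_0\).

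\textbf{Main obstacle.} The delicate step is the bookkeeping of the constants. We must ensure that a single choice of \(c_1\) makes both \(N\ge2\) and \(T\ge2\), and that \(c_0\) is uniform over all parameters. I would first fix \(s\), then derive explicit lower bounds \(N\ge c\,L\Dy/\eps\) and \(T-1\ge c\,L\Del/\eps^2\) under \eqref{eq:small-eps}, and read off \(c_0\) as the product of the two constants.
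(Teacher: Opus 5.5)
Your proposal is correct and follows the paper's proof essentially step for step. It uses the same choices \(\ell_0=L/C_\ell\) and \(s=2C_\ell\eps/(C_\delta L)\), the same chain lengths \(N\asymp L\Dy/\eps\) and \(T\asymp L\Del/\eps^2\) set by the dual-feasibility and exact-gap conditions of Lemma~\ref{lem:gap}, and the same count \(K\ge (T-1)N\) combined with part~3 of Lemma~\ref{lem:gradient-lower}. The only difference is that your floor definitions saturate the feasibility and gap constraints exactly, whereas the paper builds in slack (\(NRs\le\Dy/8\) and a gap of at most \(\tfrac{5}{16}\Del\)); this changes only the values of \(c_0\) and \(c_1\).
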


Theorem~\ref{thm:main} expresses the lower bound in terms of the
initial primal value gap. For the hard instance constructed in
its proof, this gap equals the initial primal--dual gap.
Therefore, replacing \(\Del\) by \(\mathcal G_0\) in the same
construction gives the corresponding lower bound for
\(\mathcal F_{\rm NCC}^{\rm pd}(L,\mathcal G_0,\Dy)\).
The proof of the following corollary is given in
Appendix~\ref{app:det-pd-gap}.

\begin{corollary}\label{cor:det-pd-gap}
There exists a universal constant \(c>0\) such that, for every
\(L,\mathcal G_0,\Dy>0\) and every \(\eps\) satisfying
\[
 0<\eps\le
 c\min\{\sqrt{L\mathcal G_0},\,L\Dy\},
\]
there is an instance
\[
 f\in
 \mathcal F_{\rm NCC}^{\rm pd}(L,\mathcal G_0,\Dy)
\]
on which every deterministic zero-respecting first-order
algorithm requires
\[
 \Omega\!\left(L^2\Dy\mathcal G_0\,\eps^{-3}\right)
\]
joint oracle evaluations to return an \(\eps\)-stationary point
of the primal value function.
\end{corollary}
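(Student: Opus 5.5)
The plan is to reuse verbatim the hard instance and parameter choices from the proof of Theorem~\ref{thm:main}, with \(\Del\) replaced by \(\mathcal G_0\). The key observation is that, for this instance, the initial primal--dual gap
\[
\max_{\vct{y}\in Y_0}f(0,\vct{y})-\inf_{\vct{x}\in X_0}f(\vct{x},0)
\]
equals the initial primal value gap \(\Phi(0)-\inf_{X_0}\Phi\) computed in Lemma~\ref{lem:gap}. Once this identity is in hand, membership in \(\mathcal F_{\rm NCC}^{\rm pd}(L,\mathcal G_0,\Dy)\) follows from the same parameter bound that gave membership in \(\mathcal F_{\rm NCC}(L,\Del,\Dy)\). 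Smoothness (Lemma~\ref{lem:smooth}), concavity, the dual diameter, the zero-chain structure, and the Moreau-gradient lower bound (Lemma~\ref{lem:gradient-lower}) do not involve the gap parameterization. The oracle-count conclusion therefore transfers unchanged, giving \(\Omega(L^2\Dy\mathcal G_0\eps^{-3})\) under \(\eps\le c\min\{\sqrt{L\mathcal G_0},L\Dy\}\).

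To verify the identity, first compute \(\max_{\vct y}f(0,\vct y)=\Phi(0)\) from the exact representation \eqref{eq:exact-value}. At \(\tilde{\vct u}=\tilde{\vct a}=\tilde{\vct b}=0\) we have \(\vct\rho(0)=0\), \(\bar{\vct r}(0)=0\), \(\nu(0)=r(0)=0\), so \(\Psi(0)=0\) and hence \(\Phi(0)=0\).

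Next compute \(\inf_{X_0}f(\vct x,0)\). With \(\vct y=0\), each block \(h_N(a_i,b_i;0)=-\ell_0\alpha^2(N-1)b_i^2/8\). Hence
\[
f(\vct x,0)=\ell_0s^2\Bigl[\Psi_0(\tilde{\vct u},\tilde{\vct a},\tilde{\vct b})-\tfrac{\alpha^2(N-1)}{8}\|\tilde{\vct b}\|^2\Bigr],
\]
while \(\Phi=\ell_0s^2[\Psi_0+\tfrac12\|\tilde{\vct a}-\tfrac12\tilde{\vct b}\|^2]\). The general inequality \(\Phi\ge f(\cdot,0)\) gives \(\inf f(\cdot,0)\le\inf\Phi\). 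It remains to show the reverse, namely that \(\inf_{C_0}[\Psi_0-\tfrac{\alpha^2(N-1)}8\|\tilde{\vct b}\|^2]\) equals \(\inf_{C_0}\Psi=-c_\eta(T+\tfrac12)\), which is the value from \eqref{eq:exact-gap}.

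The \(\Psi_0\) part is minimized (as in the proof of Lemma~\ref{lem:gap}) with \(\tilde u_1\to\)~large, \(\tilde u_j\ge1\) so that \(r(\tilde u_j)=1-\tilde u_j\). I would check directly that the infimum of \(\Psi_0-\tfrac{\alpha^2(N-1)}8\|\tilde{\vct b}\|^2\) is attained at \(\tilde{\vct a}=\tilde{\vct b}=0\), \(\tilde u_j=1\) (\(j\ge2\)), \(\tilde u_1=1+1\) (so that \(-c_\eta\tilde u_1+\tfrac{c_\eta}{2}r(\tilde u_1)^2\) attains \(-\tfrac32c_\eta\)). At this point \(\tilde{\vct b}=0\), so the negative \(\tilde{\vct b}\)-term vanishes and the two infima coincide. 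For this one bounds, blockwise, \(\tfrac12(\tilde b_i-r)^2+\tfrac12\tilde b_i^2-\tfrac{\alpha^2(N-1)}8\tilde b_i^2\ge\tfrac14 r^2+(\tfrac14-\tfrac{1}{8})\tilde b_i^2\cdot\)(const)\(\ge0\), using \(\alpha^2(N-1)=(N-1)/N<1\). The \(\tilde{\vct a}\) terms are nonnegative, and \(H\ge-c_\eta(T+\tfrac12)\) holds by the same scalar computation as in Lemma~\ref{lem:gap}.

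The main obstacle is this last inequality. If the paper's exact gap in \eqref{eq:exact-gap} arises from a slightly different minimizer, I would instead settle for the weaker statement \(\mathcal G_0(f)\le C\,c_\eta(T+\tfrac12)\ell_0s^2\) for a universal \(C\). That bound follows from the same blockwise estimate, since the \(\tilde{\vct b}\)-penalty coefficient is at most \(1/8\), strictly less than the coercive \(\tfrac12\|\tilde{\vct b}\|^2\) term, and it suffices after rescaling \(s\) (equivalently \(T\)) by a constant. Either way, choosing parameters as in Theorem~\ref{thm:main} with \(\mathcal G_0/C\) in place of \(\Del\) yields the corollary.
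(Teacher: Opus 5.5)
Your proposal is correct and follows the paper's proof. You reuse the instance of Theorem~\ref{thm:main} with $\Delta_\Phi=\mathcal G_0$, note $\max_{\vct{y}\in Y_0}f(0,\vct{y})=\Phi(\vct{0})=0$, and show $\inf_{X_0}f(\cdot,0)=-c_\eta(T+\tfrac12)\ell_0s^2$ via $H\ge-c_\eta(T+\tfrac12)$, nonnegativity of the remaining terms (the $\|\tilde{\vct{b}}\|^2$ coefficient is $\tfrac12-\tfrac{\alpha^2(N-1)}{8}=\tfrac{3N+1}{8N}>0$), and attainment at $\tilde{\vct{u}}=(2,1,\ldots,1)$, $\tilde{\vct{a}}=\tilde{\vct{b}}=0$.

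Your displayed blockwise estimate with the $\tfrac14 r^2$ term is not right as written. However, only termwise nonnegativity is needed, so the fallback with a universal constant $C$ is unnecessary.
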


\section{Lower Bound for Stochastic NC-C Minimax Optimization}
\label{sec:stoch-zr}

We extend the deterministic construction to establish a lower bound
under unbiased stochastic first-order oracles with bounded variance.
The construction retains the same primal value function and the
stationarity lower bound at points with $u_T=0$. We modify the dual
blocks so that the gradient components corresponding to
$y_2^{(i)},\ldots,y_N^{(i)}$ are uniformly bounded, and apply
Bernoulli randomization only to these coordinates. The coordinates
$y_1^{(i)}$ and all primal coordinates are returned deterministically.

To reach the terminal primal coordinate, a zero-respecting algorithm
must cross $M=(T-1)(N-1)$ randomized dual coordinates. Each such coordinate requires a successful Bernoulli trial, while the intervening deterministic
transitions can be treated as free in the progress analysis.
Combining this progress bound with the stationarity obstruction yields
the stochastic oracle lower bound.

Throughout this section, we retain the notation and universal
constants introduced in Section~\ref{sec:hard}. In particular, we
use the same primal functions \(\Psi_0,\Psi\), feasible sets
\(C_0,X_0,Y_0\), coordinate ordering \(\vct{\pi}\), and chain length
\(K\). We take \(\alpha=N^{-1/2}\) and
\(\ell_0=L/C_\ell\), using the same universal constant
\(C_\ell\) fixed in Appendix~\ref{app:smooth}. Only the dual blocks are modified.

\begin{figure}[t]
    \centering
    \includegraphics[width=0.9\linewidth]{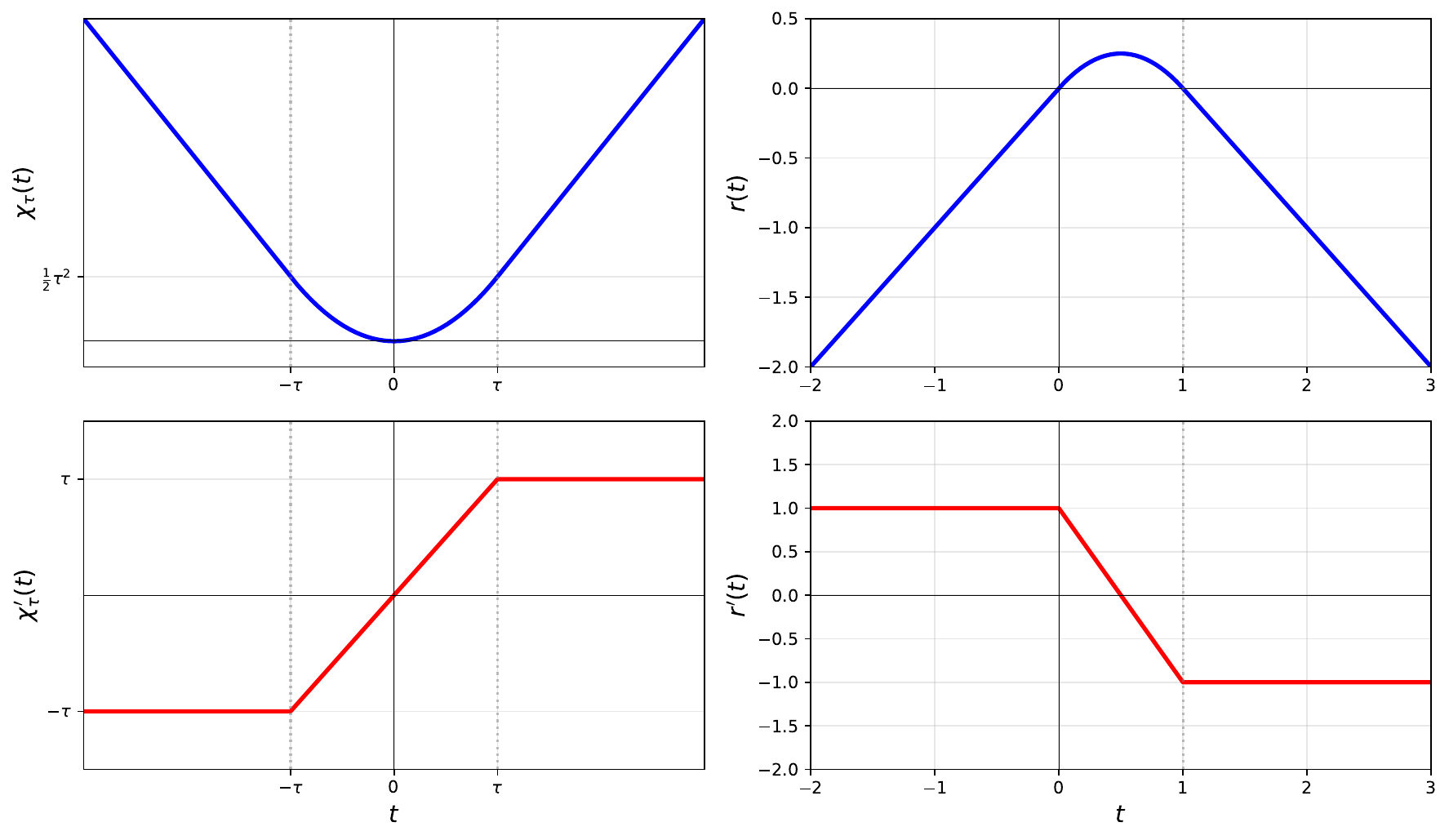}
    \caption{The functions $r$ and $\chi_\tau$, together with their derivatives. The function $r$ couples auxiliary primal variables to subsequent primal coordinates. The clipping in $\chi_\tau$ provides the uniform gradient bound needed for the randomized dual coordinates $y_2^{(i)},\ldots,y_N^{(i)}$.}
    \label{fig:auxiliary-functions}
\end{figure}
\subsection{Clipping the Dual Quadratic Terms}
The stochastic oracle will randomize only the dual coordinates $y_2^{(i)},\ldots,y_N^{(i)}$. To control the variance of this oracle, we need a uniform bound on the corresponding gradient components. In the deterministic construction, the quadratic difference terms produce gradients whose magnitude depends on the neighboring dual coordinates, and the diameter bound on $Y_0$ alone does not provide the required dependence on $N$. We therefore replace these terms with clipped quadratics.

For \(\tau>0\), define
\begin{equation}\label{eq:stoch-huber}
 \chi_\tau(t)
 :=
 \begin{cases}
  \frac12t^2, & |t|\le\tau,\\[2mm]
  \tau|t|-\frac12\tau^2, & |t|>\tau,
 \end{cases}
 \qquad
 \chi_\tau'(t)
 =
 \begin{cases}
  -\tau, & t<-\tau,\\
  t, & |t|\le\tau,\\
  \tau, & t>\tau.
 \end{cases}
\end{equation}
The function \(\chi_\tau\) is convex and continuously differentiable,
and its derivative satisfies
\begin{equation}\label{eq:stoch-huber-basic}
 |\chi_\tau'(t)|\le\tau,
 \qquad
 |\chi_\tau'(t)-\chi_\tau'(t')|\le|t-t'|,
 \qquad t,t'\in\R.
\end{equation}
The first bound controls the gradient contribution of each difference
term, while the second provides the regularity needed for a
smoothness bound independent of the problem dimensions.
The function and its derivative are shown in
Figure~\ref{fig:auxiliary-functions}.

We choose the clipping threshold as
\begin{equation}
 \tau_N:=R\alpha s.
\end{equation}
For the deterministic maximizer of a block with feasible endpoint
parameters \(a_i,b_i\), every difference between neighboring dual
coordinates equals \(\alpha b_i/2\). Since \(|b_i|\le Rs\),
its magnitude is at most \(\tau_N/2\). Thus, every such difference
lies strictly inside the quadratic region of \(\chi_{\tau_N}\).
The lemma below verifies that this choice preserves both the
maximizer and the maximum value of each feasible block.

For scalar parameters \(a,b\in\R\), we define the clipped dual block as
\begin{equation}\label{eq:stoch-hclip}
 h_N^{\rm clip}(a,b;\vct{y})
 :=
 \ell_0\left[
 -\sum_{k=1}^{N-1}\chi_{\tau_N}(y_k-y_{k+1})
 -\frac{\alpha^2}{2}y_1^2
 +\vct{c}(a,b)^\top\vct{y}
 -\frac{\alpha^2(N-1)}8b^2
 \right].
\end{equation}
where $\vct{c}(a,b):= \alpha a\vct{e}_1-\frac{\alpha}{2}b\vct{e}_N$. Replacing each deterministic dual block by this function gives
\begin{equation}\label{eq:stoch-payoff}
 f^{\rm sg}(\vct{x},\vct{y})
 :=
 \ell_0s^2
 \Psi_0(\tilde{\vct{u}},\tilde{\vct{a}},\tilde{\vct{b}})
 +
 \sum_{i=1}^{T-1}
 h_N^{\rm clip}(a_i,b_i;\vct{y}^{(i)}),
 \qquad
 \tilde{\vct{u}}=\frac{\vct{u}}s,\quad
 \tilde{\vct{a}}=\frac{\vct{a}}s,\quad
 \tilde{\vct{b}}=\frac{\vct{b}}s.
\end{equation}
We use the same feasible sets as in the deterministic construction:
\[
 X_0
 =
 \R^T\times
 \left\{
 (\vct{a},\vct{b})\in\R^{T-1}\times\R^{T-1}:
 \norm{\vct{a}}^2+\norm{\vct{b}}^2\le(Rs)^2
 \right\},
 \qquad
 Y_0
 =
 \frac{\Dy}{2}\mathbb B_2^{(T-1)N}.
\]
Define the primal value function by
\begin{equation}\label{eq:stoch-value}
 \Phi_{\rm sg}(\vct{x})
 :=
 \max_{\vct{y}\in Y_0}f^{\rm sg}(\vct{x},\vct{y}),
 \qquad \vct{x}\in X_0,
\end{equation}
and let
\(\varphi_{\rm sg}:=\Phi_{\rm sg}+\iota_{X_0}\)
denote its extended-valued version.

The following lemma establishes the properties needed for the stochastic construction. In addition to preserving the primal value function, clipping retains concavity, joint smoothness, and the zero-chain property. It also gives a uniform bound on the gradient components corresponding to $y_j^{(i)}$, $j=2,\ldots,N$, which are precisely the coordinates randomized by the stochastic oracle. The proof is given in Appendix~\ref{app:clipped-path}.

\begin{lemma}\label{lem:stoch-clipped-package}
Assume \(T,N\ge2\), \(\alpha=N^{-1/2}\), and
\begin{equation}\label{eq:stoch-dual-feas}
 2N^2R^2s^2\le(\Dy/4)^2.
\end{equation}
Then the instance \eqref{eq:stoch-payoff} has the following properties.
\begin{enumerate}[label=\arabic*.]

\item For every
\(\vct{x}=(\vct{u},\vct{a},\vct{b})\in X_0\)
and every \(i=1,\ldots,T-1\), the vector
\(\vct{y}^*(a_i,b_i)\) with components
\begin{equation}\label{eq:stoch-ystar}
 y_k^*(a_i,b_i)
 =
 \frac{a_i}{\alpha}
 -\frac{b_i}{2\alpha}
 -\frac{\alpha b_i}{2}(k-1),
 \qquad k=1,\ldots,N,
\end{equation}
is the unique maximizer of
\(h_N^{\rm clip}(a_i,b_i;\cdot)\) over \(\R^N\), and
\begin{equation}\label{eq:stoch-block-value}
 \max_{\vct{y}\in\R^N}
 h_N^{\rm clip}(a_i,b_i;\vct{y})
 =
 \frac{\ell_0}{2}\left(a_i-\frac{b_i}{2}\right)^2.
\end{equation}

\item The concatenation of these block maximizers lies in the
interior of \(Y_0\). Consequently, the constrained primal value
function equals that of the deterministic instance:
\begin{equation}\label{eq:stoch-value-identity}
 \Phi_{\rm sg}(\vct{u},\vct{a},\vct{b})
 =
 \ell_0s^2
 \Psi(\vct{u}/s,\vct{a}/s,\vct{b}/s).
\end{equation}
In particular,
\begin{equation}\label{eq:stoch-gap-exact}
 \Phi_{\rm sg}(0)
 -
 \inf_{\vct{x}\in X_0}\Phi_{\rm sg}(\vct{x})
 =
 c_\eta\left(T+\frac12\right)\ell_0s^2.
\end{equation}

\item For every \(\vct{x}\in X_0\), the function
\(f^{\rm sg}(\vct{x},\cdot)\) is concave on \(Y_0\).

\item The hard instance $f^{\rm sg}(\vct{x},\vct{y})$ is differentiable and satisfies
\begin{equation}\label{eq:stoch-smooth-package}
\norm{\nabla f^{\rm sg}(\vct{z})
-\nabla f^{\rm sg}(\vct{z}')}
\le
C_\ell\ell_0\norm{\vct{z}-\vct{z}'},
\qquad
\vct{z},\vct{z}'\in X_0\times Y_0.
\end{equation}
In particular, with $\ell_0=L/C_\ell$, the function $f^{\rm sg}$ is
$L$-smooth.

\item The function $f^{\rm sg}$ is a first-order zero-chain
with respect to the ordering $\vct{\pi}$:
\begin{equation}\label{eq:stoch-chain-progress}
\prog^{\vct{\pi}}_0\bigl(\nabla f^{\rm sg}(\vct{z})\bigr)
\le
\min\{\prog^{\vct{\pi}}_0(\vct{z})+1,K\},
\qquad
\vct{z}\in X_0\times Y_0.
\end{equation}
Moreover, for every $\vct{z}\in X_0\times Y_0$, we have \begin{equation}\label{eq:stoch-next-reveal}
\left|
\nabla_{y_j^{(i)}} f^{\rm sg}(\vct{z})
\right|
\le
G_N,
\qquad
i=1,\ldots,T-1,\quad j=2,\ldots,N.
\end{equation}
where $G_N=2\ell_0\tau_N$.

\end{enumerate}
\end{lemma}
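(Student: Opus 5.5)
I would prove the five items in order, reducing as much as possible to the deterministic facts of Lemmas~\ref{lem:gap}, \ref{lem:smooth} and \ref{lem:gradient-lower}.

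For item~1, observe that for fixed \((a,b)\) the map \(\vct{y}\mapsto h_N^{\rm clip}(a,b;\vct{y})\) is concave. It is minus a sum of convex functions \(\chi_{\tau_N}(y_k-y_{k+1})\), plus the strictly concave term \(-\frac{\alpha^2}{2}y_1^2\), plus a linear term. I would check the first-order conditions directly at \(\vct{y}^*\).

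\emph{Stationarity.} All consecutive differences equal \(y_k^*-y_{k+1}^*=\alpha b/2\), and \(|\alpha b/2|\le \alpha Rs/2=\tau_N/2<\tau_N\), using \(|b_i|\le\norm{\vct{b}}\le Rs\) on \(X_0\). So every \(\chi'\) acts as the identity at \(\vct{y}^*\), and the gradient coincides with that of the unclipped quadratic block. Hence stationarity reduces to \(B_{\alpha,N}\vct{y}^*=\vct{c}(a,b)\). This is checked row by row:
\begin{itemize}
\item Interior rows vanish because \(\vct{y}^*\) is affine in \(k\).
\item The last row gives \(y_N-y_{N-1}=-\alpha b/2\).
\item The first row gives \(\alpha^2y_1+(y_1-y_2)=\alpha a-\alpha b/2+\alpha b/2=\alpha a\).
\end{itemize}

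\emph{Uniqueness.} Any other maximizer \(\vct{y}\) gives a function value that is constant along the segment to \(\vct{y}^*\). Strict concavity in \(y_1\) then forces \(y_1=y_1^*\). Linearity of each \(\chi\) piece along the segment, combined with concavity, forces all differences to agree as well. I expect this to be cleaner as an argument that strong concavity holds on the direction \(\vct{e}_1\) and the differences determine \(\vct{y}\).

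\emph{Value.} By the quadratic identity, the value equals \(\frac{\ell_0}{2}\vct{c}^\top\vct{y}^*-\ell_0\alpha^2(N-1)b^2/8\). Expanding gives \(\frac{\ell_0}{2}(a-b/2)^2\), since the \((N-1)\) term cancels by design.

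For item~2, I would bound the norm of the concatenated maximizer. We have \(|y_k^*|\le (|a|+|b|)/\alpha+\alpha N|b|/2\le 2\sqrt N(|a_i|+|b_i|)\). Squaring and summing over \(i\) yields \(\norm{\vct{y}^*}^2\le 8N(\norm{\vct{a}}^2+\norm{\vct{b}}^2)\le 8NR^2s^2\le(\Dy/4)^2<(\Dy/2)^2\) under \eqref{eq:stoch-dual-feas}. So the maximizer is interior, and the constrained maximum equals the unconstrained one. Summing the block values gives \(\ell_0s^2\cdot\frac12\norm{\tilde{\vct a}-\tilde{\vct b}/2}^2\), which combined with the primal part is exactly \(\ell_0s^2\Psi\). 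This agrees with the deterministic value function, so the gap formula \eqref{eq:stoch-gap-exact} follows from Lemma~\ref{lem:gap}, part~3.

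Item~3 was already noted: the function is a sum of concave pieces.

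For item~4, I would reuse the decomposition from the proof of Lemma~\ref{lem:smooth}. The primal part \(\ell_0s^2\Psi_0\) and the bilinear/quadratic terms \(\vct{c}^\top\vct{y}\), \(-\alpha^2y_1^2/2\), \(-\alpha^2(N-1)b^2/8\) are unchanged. The only modification replaces the Hessian contribution of the path quadratic, with operator norm at most \(4\), by the gradient map \(\vct{y}\mapsto D^\top\chi'_{\tau_N}(D\vct{y})\), where \(D\) is the difference operator with \(\norm{D}_{\rm op}\le2\). Since \(\chi'\) is \(1\)-Lipschitz by \eqref{eq:stoch-huber-basic}, this map is \(4\)-Lipschitz, the same bound used for the quadratic. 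The constant \(C_\ell\) therefore carries over unchanged. This is the step I would check most carefully, since it depends on how the appendix organized the deterministic bound; if needed I would enlarge \(C_\ell\) consistently.

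For item~5, the sparsity pattern of \(\nabla f^{\rm sg}\) matches the deterministic one, because \(\chi'(0)=0\) and \(\chi'(t)\) vanishes iff \(t=0\) just like \(t\). The zero-chain property then follows as in Lemma~\ref{lem:gradient-lower}, part~1.

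For the gradient bound, take \(j\ge2\). The partial derivative \(\nabla_{y_j^{(i)}}f^{\rm sg}\) equals \(\ell_0[\chi'(y_{j-1}-y_j)-\chi'(y_j-y_{j+1})]\), with the second term absent when \(j=N\) and the term \(-\ell_0\alpha b_i/2\) added when \(j=N\). The two \(\chi'\) terms give at most \(2\ell_0\tau_N\). The endpoint term needs care. Since \(|\alpha b_i/2|\le\tau_N/2\), at \(j=N\) the total is only \(\ell_0(\tau_N+\tau_N/2)\le 2\ell_0\tau_N\), because there is just one \(\chi'\) term. Hence \(G_N=2\ell_0\tau_N\) suffices in all cases.
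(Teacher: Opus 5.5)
Your Parts 1, 3 and 5 are sound. For Part 1 you use first-order conditions plus a segment argument. The paper instead uses the identity $h_N^{\rm clip}(a,b;\vct{y}^*)-h_N^{\rm clip}(a,b;\vct{y})=\ell_0\bigl[\sum_k D_{\tau_N}(d,e_k(\vct{y}))+\frac{\alpha^2}{2}(y_1-y_1^*)^2\bigr]$, which gives optimality and uniqueness at once. Your version also works, provided you state that each concave summand must be affine on the segment and that $|d|<\tau_N$ puts every $\chi_{\tau_N}$ term in its strictly convex branch near $d$.

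\textbf{Part 2 contains a genuine error in the norm estimate.} From your per-coordinate bound $|y_k^*|\le2\sqrt N(|a_i|+|b_i|)$ you still have to sum over the $N$ coordinates of each block. What actually follows is $\norm{\vct{y}^*}^2\le8N^2(\norm{\vct{a}}^2+\norm{\vct{b}}^2)$. The inequality $\norm{\vct{y}^*}^2\le8N(\norm{\vct{a}}^2+\norm{\vct{b}}^2)$ that you wrote is false: for $\vct{b}=0$ one has $y_k^*=\sqrt N a_i$ for every $k$, so $\norm{\vct{y}^*}^2=N^2\norm{\vct{a}}^2$. Even granting it, $8NR^2s^2\le(\Dy/4)^2$ would not follow from \eqref{eq:stoch-dual-feas} when $N<4$.

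With the correct factor, your loose constant only gives $\norm{\vct{y}^*}\le\Dy/2$, that is, $\vct{y}^*\in Y_0$ but possibly on its boundary. This is enough for the value identity \eqref{eq:stoch-value-identity} and the gap formula. It is not enough for the interior claim stated in Part 2. The repair is a sharper per-coordinate bound, as in the paper. Write $y_k^*=(a_i-c_kb_i)/\alpha$ with $c_k=\frac12\bigl(1+\frac{k-1}{N}\bigr)\in[0,1]$. Then $|y_k^*|^2\le2N(a_i^2+b_i^2)$, so $\norm{\vct{y}^*}^2\le2N^2R^2s^2\le(\Dy/4)^2$ and $\norm{\vct{y}^*}\le\Dy/4<\Dy/2$.

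\textbf{A smaller point in Part 4.} The constant $4$ in Lemma~\ref{lem:smooth} is a row-sum bound on the Hessian of the whole block, not of the path quadratic alone. It already includes the $a$--$y_1$ and $y_N$--$b$ couplings, the anchor term and the $b^2$ term. That row-sum argument does not apply to the nonlinear map $E_N^\top\chi'_{\tau_N}(E_N\vct{y})$. You have to add the linear coupling part separately; its operator norm is at most $1$, giving a block constant of at most $5$, while the paper's cruder triangle-inequality bound gives $6$. So the same $C_\ell$ works, but not because the bound is unchanged. It works because the paper fixed $C_\ell\ge C_\Psi+6$ in Appendix~\ref{app:smooth} to leave exactly this slack, which is consistent with your fallback of enlarging $C_\ell$.
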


\subsection{A Probability-$p$ Dual Zero-chain}
\label{subsec:stoch-oracle}
We first recall two standard results for stochastic zero-chains.
\begin{lemma}[Lemma 3 in~\cite{arjevani2023lower}]
\label{lem:bernoulli-zero-chain}
Let $f:\mathcal X\to\R$ be a zero-chain on
$\mathcal X\subset\R^T$. For $\vct{x}\in\mathcal X$, let $i^*(\vct{x}):=\inf\{i\in[T]:x_i=0\}$ denote the next coordinate to be discovered. For $p\in(0,1]$, define
\[
[\widehat{\nabla}f(\vct{x},\xi)]_i
:=
\begin{cases}
\dfrac{\xi}{p}\nabla_i f(\vct{x}),
& i=i^*(\vct{x}),\\[1ex]
\nabla_i f(\vct{x}),
& \text{otherwise},
\end{cases}
\]
where $\xi\sim\mathrm{Bernoulli}(p)$. Suppose
$\|\nabla f(\vct{x})\|_\infty\le G$ for every
$\vct{x}\in\mathcal X$. Then $O:\vct{x}\mapsto
\bigl(f(\vct{x}),\widehat{\nabla}f(\vct{x},\xi)\bigr)$ is a stochastic first-order oracle with variance bounded by $\sigma^2
\le
G^2\frac{1-p}{p}$.
Moreover, $f$ equipped with $O$ is a probability-$p$ zero-chain.
\end{lemma}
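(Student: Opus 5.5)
The lemma has three claims: unbiasedness, the variance bound, and the probability-$p$ zero-chain property. I would verify each directly from the definition of the oracle, conditioning on the query point $\vct{x}$.

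\emph{Unbiasedness.} Only the single coordinate $i^*(\vct{x})$ is random. Since $\E[\xi/p]=1$ for $\xi\sim\mathrm{Bernoulli}(p)$, we get $\E[\widehat\nabla f(\vct{x},\xi)]=\nabla f(\vct{x})$ coordinatewise. If $i^*(\vct{x})=\infty$ (no zero coordinate), the oracle is deterministic and there is nothing to prove.

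\emph{Variance.} The error vector $\widehat\nabla f(\vct{x},\xi)-\nabla f(\vct{x})$ is supported on the single coordinate $i^*$, where it equals $(\xi/p-1)\nabla_{i^*}f(\vct{x})$. Hence
\[
\E\norm{\widehat\nabla f(\vct{x},\xi)-\nabla f(\vct{x})}^2=\nabla_{i^*}f(\vct{x})^2\,\E(\xi/p-1)^2=\nabla_{i^*}f(\vct{x})^2\,\frac{1-p}{p}\le G^2\frac{1-p}{p},
\]
using $\operatorname{Var}(\xi)=p(1-p)$ and $\norm{\nabla f(\vct{x})}_\infty\le G$. The point of the construction is that the bound needs only the $\ell_\infty$ control on the gradient, not the Euclidean norm.

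\emph{Probability-$p$ zero-chain.} This is the step needing care. Let $r:=\prog_0(\vct{x})<T$. Here $i^*(\vct{x})$ is the first zero coordinate, so $i^*\le r+1$ always. The relevant zero-chain notion, as in \cite{arjevani2023lower}, is the stronger one: $\nabla_i f(\vct{x})=0$ for all $i>i^*$, i.e.\ $\prog_0(\nabla f(\vct{x}))\le\prog_0(\vct{x})+1$ with progress measured as the prefix of revealed coordinates. I will use it in that form, noting that for zero-respecting iterates started at $0$ the support is always a prefix, so $i^*=r+1$.

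Since the stochastic gradient agrees with $\nabla f(\vct{x})$ except for scaling coordinate $i^*$, its support is contained in the support of $\nabla f(\vct{x})$. Therefore $\prog_0(\widehat\nabla f)\le r+1$ almost surely. Coordinate $r+1=i^*$ of $\widehat\nabla f$ is nonzero only if $\xi=1$, so $\mathbb P(\prog_0(\widehat\nabla f)=r+1)\le\mathbb P(\xi=1)=p$. If the support of $\vct{x}$ were not a prefix (so $i^*<r+1$), then coordinate $r+1$ would be returned deterministically. Handling that case needs the prefix-support reduction above, and this is the main subtlety I expect.
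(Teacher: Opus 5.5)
Your unbiasedness and variance computations are correct and match the argument of Arjevani et al.~\cite{arjevani2023lower}: the error is carried by a single coordinate, so only the $\ell_\infty$ bound enters. The probability-$p$ zero-chain part, however, has a genuine gap, and it cannot be closed for the statement as written. The reduction you rely on is false. Zero-respecting only confines $\supp(\vct{x}^t)$ to the revealed prefix; the algorithm is free to set earlier coordinates to zero. Moreover, the property must hold at every $\vct{x}\in\mathcal X$, so a fact about iterates would not suffice anyway. With $i^*$ the \emph{first} zero coordinate and the paper's zero-chain definition, the conclusion actually fails. Take $f(\vct{x})=x_1+\sum_{k=1}^{T-1}x_kx_{k+1}$ on $[-1,1]^T$, which is a zero-chain with $\norm{\nabla f}_\infty\le 2$. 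At $\vct{x}=\vct{e}_2$ we have $r=2$, $i^*=1$ and $\widehat\nabla f(\vct{x},\xi)=(2\xi/p,0,1,0,\ldots)$, so $\prog_0(\widehat\nabla f)=r+1$ with probability one. This is exploitable, not a corner case: once coordinates $1,2$ are revealed, a zero-respecting algorithm querying $\vct{e}_2,\vct{e}_3,\ldots$ reveals one new coordinate per call deterministically, so no $T/p$ progress bound survives.

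Your fallback hypothesis, $\nabla_i f(\vct{x})=0$ for all $i>i^*(\vct{x})$, would in fact make your argument complete for every $\vct{x}$. If $i^*<r+1$ then $\prog_0(\widehat\nabla f)\le i^*\le r$, so the case you worry about in your last paragraph is vacuous. But this is not the lemma's hypothesis, and it is not what Arjevani et al.\ assume. It also fails for the example above ($\nabla_3 f(\vct{e}_2)=1$ while $i^*=1$) and for the paper's own instance: at a point with $u_1=0\neq a_1$, the $y^{(1)}_1$-component of the gradient is $\ell_0\alpha a_1\neq 0$. The missing idea is to randomize at the frontier rather than at the first zero, i.e.\ multiply coordinate $\prog_0(\vct{x})+1$ (equivalently, every $i>\prog_0(\vct{x})$) by $\xi/p$. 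Arjevani et al.\ do exactly this with $\prog_{1/4}$ in place of $\prog_0$, using $\prog_0(\nabla F_T(\vct{x}))\le\prog_{1/2}(\vct{x})+1$. With that oracle your three computations go through verbatim and no prefix assumption is needed: the error sits on coordinate $r+1$ alone, $\prog_0(\widehat\nabla f)\le r+1$ surely, and coordinate $r+1$ is nonzero only if $\xi=1$. This frontier reading is also the one under which the oracle of Section~\ref{subsec:stoch-oracle} delivers the stated bound.
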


\begin{lemma}[Lemma 1 in~\cite{arjevani2023lower}]
\label{lem:probability-zero-chain-progress}
Let $f:\mathcal X\to\R$, where $\mathcal X\subset\R^T$ satisfies
$\operatorname{supp}(P_{\mathcal X}(\vct{x}))
=
\operatorname{supp}(\vct{x}), \,\,
\forall\,\vct{x}\in\R^T$, and suppose that $f$ is a probability-$p$ zero-chain equipped with a stochastic first-order oracle. Then, for any first-order algorithm, with probability at least $1-\delta$,
\[
x_T^t=0,
\qquad
\forall\,
t\le
\frac{T-\log(1/\delta)}{2p}.
\]
\end{lemma}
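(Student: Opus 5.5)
The plan is to reduce the statement to a sequential-trial bound for the running progress index. Write $r_t:=\prog_0(\vct{x}^{(0)},\ldots,\vct{x}^{(t)})$ for the largest coordinate index revealed after $t$ oracle calls; this is the maximum over the supports of all queries and returned stochastic gradients up to time $t$. Because projection onto $\mathcal X$ preserves supports and the algorithm is zero-respecting (Definition~\ref{def:zero-respecting}), the support of each new query lies in $\{1,\ldots,r_t\}$.

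The probability-$p$ zero-chain property, applied at the query, then gives two facts about the next stochastic gradient. It never has a nonzero coordinate beyond $r_t+1$. Its coordinate $r_t+1$ is nonzero with conditional probability at most $p$, given the past transcript, and this holds even though the query itself may have progress strictly smaller than $r_t$. That is exactly the case the definition handles: the oracle's prog bound is relative to the query's own progress, which can only be $\le r_t$. Consequently $r_{t+1}\le r_t+\mathbf 1\{\text{success at step }t\}$, where each success occurs with conditional probability at most $p$.

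Next I would couple this with Bernoulli variables. Let $S_t$ denote the indicator that $r_{t+1}=r_t+1$; then $\mathbb P(S_t=1\mid\mathcal F_t)\le p$. By a standard coupling, or by a direct supermartingale argument with $\exp(\lambda\sum S_t)$, the count $\sum_{s<t}S_s$ is stochastically dominated by $\mathrm{Bin}(t,p)$. Reaching $x_T\ne0$ requires $r_t\ge T$, which means at least $T$ successes. With $t\le (T-\log(1/\delta))/(2p)$ the mean is $tp\le (T-\log(1/\delta))/2$. A Chernoff bound of the form $\mathbb P(\mathrm{Bin}\ge T)\le \exp(-(T-2tp)\cdot c)$, or more precisely the multiplicative bound $\mathbb P(X\ge \mu+a)\le e^{-a/ \ldots}$, should give failure probability at most $\delta$. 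Concretely, use $\mathbb E e^{\lambda X}\le e^{tp(e^\lambda-1)}$ with $\lambda=\log 2$. This gives $\mathbb P(X\ge T)\le e^{tp-T\log2}\le e^{(T-\log(1/\delta))/2-T\log 2}$. Since $\tfrac12<\log2$, the exponent is at most $-\tfrac12\log(1/\delta)$. If that is not sharp enough to reach $\delta$, I would adjust with a slightly different $\lambda$ (e.g.\ $\lambda=1$ gives $e^{tp(e-1)-T}$, which with $tp\le (T-\log(1/\delta))/2$ yields exponent $\le -\log(1/\delta)$ since $(e-1)/2<1$).

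The main obstacle is rigor in the adaptive setting. The query depends on past randomness, so the independence needed for the binomial bound must come from the conditional bound $\le p$ holding uniformly over the transcript. The exponential-moment argument uses only that: $\mathbb E[e^{\lambda S_t}\mid\mathcal F_t]\le 1+p(e^\lambda-1)\le e^{p(e^\lambda-1)}$. Iterating with the tower property then gives the required moment bound without any independence assumption.
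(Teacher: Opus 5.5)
Your proof is correct and is the standard argument behind the cited Lemma~1 of Arjevani et al.; the paper imports that lemma without reproducing a proof. The steps are the same: a monotone running progress index that grows by at most one per call, with conditional success probability at most $p$ because of the zero-respecting property and support-preserving projection, followed by an exponential-moment Chernoff bound with $\lambda=1$, where $(e-1)tp\le 2tp\le T-\log(1/\delta)$ gives failure probability at most $\delta$. Monotonicity of $r_t$ then upgrades the fixed-$t$ bound to all $t$ in the range. Your explicit zero-respecting hypothesis is genuinely needed, even though the statement as printed says ``any first-order algorithm''.
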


We now apply the same Bernoulli randomization to the clipped construction.
We randomize only the dual coordinates
$y_j^{(i)},
\,\,
i=1,\ldots,T-1,
\,\,
j=2,\ldots,N$. The coordinates $y_1^{(i)}$ and all primal coordinates are left
deterministic. By Part~5 of Lemma~\ref{lem:stoch-clipped-package}, every
randomized coordinate satisfies, for every feasible $\vct{z}$,
\begin{equation}\label{eq:stoch-randomized-gradient}
\left|
\nabla_{y_j^{(i)}}f^{\rm sg}(\vct{z})
\right|
\le
G_N,
\qquad
G_N:=2\ell_0\tau_N,
\qquad
j=2,\ldots,N.
\end{equation}

At an oracle call, if the next coordinate in the zero-chain ordering is
one of the randomized dual coordinates, we multiply only that gradient
component by $\xi/p_N$, where
$\xi\sim\mathrm{Bernoulli}(p_N)$. All remaining gradient components are
returned exactly. If the next coordinate is not randomized, the exact
gradient is returned.

Since all nonrandomized gradient components are returned exactly, the
difference $\widehat{\nabla}f^{\rm sg}(\vct{z};\xi)-\nabla f^{\rm sg}(\vct{z})$ is supported only on the randomized dual coordinate. Therefore,
the bound in~\eqref{eq:stoch-randomized-gradient} and the Bernoulli
calculation in Lemma~\ref{lem:bernoulli-zero-chain} give
\begin{equation}\label{eq:stoch-oracle-unbiased}
\mathbb E_\xi
\left[
\widehat{\nabla}f^{\rm sg}(\vct{z};\xi)
\right]
=
\nabla f^{\rm sg}(\vct{z}),
\end{equation}
and
\begin{equation}\label{eq:stoch-oracle-variance}
\mathbb E_\xi
\left[
\left\|
\widehat{\nabla}f^{\rm sg}(\vct{z};\xi)
-
\nabla f^{\rm sg}(\vct{z})
\right\|^2
\right]
\le
G_N^2\frac{1-p_N}{p_N}.
\end{equation}

For a prescribed variance bound $\sigma^2$, choose
\begin{equation}\label{eq:stoch-pN}
p_N
:=
\begin{cases}
1,
& \sigma=0,\\[1ex]
\min\{1,G_N^2/\sigma^2\},
& \sigma>0.
\end{cases}
\end{equation}
Then
\[
G_N^2\frac{1-p_N}{p_N}
\le
\sigma^2,
\]
so the resulting oracle is unbiased with mean-square error at most
$\sigma^2$.

We next consider the progress of a stochastic zero-respecting algorithm.
Each dual block contains $N-1$ randomized coordinates,
\[
y_2^{(i)},\ldots,y_N^{(i)},
\]
so the total number of randomized dual coordinates that must be crossed
before the terminal primal coordinate $u_T$ can be activated is
\begin{equation}\label{eq:stoch-randomized-length}
M
:=
(T-1)(N-1).
\end{equation}
The transitions through $u_i$, $a_i$, $y_1^{(i)}$, and $b_i$ are
deterministic. Treating these deterministic transitions as free can only
accelerate the algorithm. Hence, after suppressing them, progress is
measured through the ordered sequence
\[
y_2^{(1)},\ldots,y_N^{(1)},
y_2^{(2)},\ldots,y_N^{(2)},
\ldots,
y_2^{(T-1)},\ldots,y_N^{(T-1)}.
\]
By the zero-chain property in Lemma~\ref{lem:stoch-clipped-package}, these
coordinates cannot be skipped, while each next randomized coordinate is
revealed only after a successful Bernoulli trial with probability $p_N$.
Therefore the progress bound of
Lemma~\ref{lem:probability-zero-chain-progress} applies to this contracted
sequence.

Taking $\delta=1/4$ and replacing the chain length in
Lemma~\ref{lem:probability-zero-chain-progress} by $M$ gives
\begin{equation}\label{eq:stoch-progress-budget}
q
\le
\frac{M-\log 4}{2p_N}
\end{equation}
and consequently
\begin{equation}\label{eq:stoch-hidden-terminal}
\mathbb P(u_T=0)
\ge
\frac34.
\end{equation}
Since
\[
M=(T-1)(N-1)=\Theta(TN),
\]
contracting the deterministic transitions changes only universal
constants in the final oracle-complexity bound.

By Lemma~\ref{lem:stoch-clipped-package}, the clipped construction has the same primal value function, and hence the same proximal mapping, as the
deterministic construction. Therefore the stationarity lower bound in
Lemma~\ref{lem:gradient-lower} also applies to $f^{\rm sg}$. Choose
\begin{equation}\label{eq:stoch-scale}
s
:=
\frac{8\epsilon}{3C_\delta\ell_0}.
\end{equation}
Since $C_\ell\ell_0=L$, Part~3 of
Lemma~\ref{lem:gradient-lower} gives
\[
\left\|
\nabla(\varphi_{\rm sg})_{1/(2L)}(\vct{x})
\right\|
>
\frac{C_\delta}{2}\ell_0s
=
\frac43\epsilon
\]
whenever $u_T=0$. Under \eqref{eq:stoch-progress-budget}, this event has
probability at least $3/4$. Taking expectations therefore yields the
following lemma.
\begin{lemma}\label{lem:stoch-expected-obstruction}
Assume \eqref{eq:stoch-dual-feas}, the scale
\eqref{eq:stoch-scale}, and the progress budget
\eqref{eq:stoch-progress-budget}. Let $\vct{x}$ be any
zero-respecting primal output produced after $q$ oracle calls to the
stochastic oracle defined above, and let
\[
\vct{p}
=
p_{1/(2L)}(\vct{x})
\]
denote the unique proximal point of $\varphi_{\rm sg}$ at scale
$1/(2L)$. Then
\[
\mathbb E
\left\|
G_{1/(2L)}(\vct{x};\vct{p})
\right\|
>
\epsilon.
\]
\end{lemma}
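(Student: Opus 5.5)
The proof combines three ingredients: the pathwise obstruction of Lemma~\ref{lem:gradient-lower}, the value-function identity of Lemma~\ref{lem:stoch-clipped-package}, and the progress bound \eqref{eq:stoch-hidden-terminal}.

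\textbf{Step 1: the obstruction transfers to the clipped instance.} By Part~2 of Lemma~\ref{lem:stoch-clipped-package}, $\Phi_{\rm sg}$ coincides with the deterministic value function $\ell_0 s^2\Psi(\cdot/s)$ on $X_0$. Both are defined over the same set $X_0$, so $\varphi_{\rm sg}=\varphi$ as extended-valued functions. Hence the proximal sets $\Pcal_{1/(2L)}$ coincide. With $\ell_0=L/C_\ell$ we have $\bar L=C_\ell\ell_0=L$, so $1/(2\bar L)=1/(2L)$.

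Part~3 of Lemma~\ref{lem:gradient-lower} needs only $u_T=0$ and feasibility $\vct{x}\in X_0$, together with the hypothesis \eqref{eq:stoch-dual-feas}. It therefore gives, pointwise for every realization,
\[
\norm{G_{1/(2L)}(\vct{x};\vct{p})}>\tfrac{C_\delta}{2}\ell_0 s=\tfrac43\eps \quad\text{on }\{u_T=0\},
\]
using the scale \eqref{eq:stoch-scale}. Here the output $\vct{x}$ lies in $X_0$ because the algorithm is zero-respecting and projects onto $X_0$.

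\textbf{Step 2: the terminal coordinate stays hidden with probability at least $3/4$.} The output is zero-respecting, so its support is contained in the coordinates revealed by the realized stochastic gradients. The progress argument preceding the lemma contracts the deterministic transitions and applies Lemma~\ref{lem:probability-zero-chain-progress} with chain length $M$ and $\delta=1/4$. Under \eqref{eq:stoch-progress-budget}, this gives $\mathbb P(u_T=0)\ge 3/4$, where $u_T$ is the terminal coordinate of the output (not only of the queries).

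\textbf{Step 3: take expectations.} Since the norm is nonnegative,
\[
\E\norm{G}\ge \E\bigl[\norm{G}\mathbf 1\{u_T=0\}\bigr].
\]
On the event $\{u_T=0\}$ we have $\norm{G}>\tfrac43\eps$ pointwise, and this event has positive probability. Therefore the expectation is strictly greater than $\tfrac43\eps\cdot\mathbb P(u_T=0)\ge\tfrac43\eps\cdot\tfrac34=\eps$. Strictness holds because a strict pointwise inequality on a positive-probability event yields a strict inequality in expectation.

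\textbf{Main obstacle.} The delicate step is Step~2, namely justifying that the contracted chain is a genuine probability-$p_N$ zero-chain. Two points need care.

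First, the deterministic links $b_i\to u_{i+1}\to a_{i+1}\to y_1^{(i+1)}$ can be traversed in a single call each and are then treated as free. One must check that the output $u_T$ being nonzero requires $y_N^{(T-1)}$ (and hence all randomized coordinates) to have been revealed first. This follows from the zero-chain ordering \eqref{eq:stoch-chain-progress}.

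Second, the Bernoulli success events must be conditionally independent given the past transcript. This holds because each oracle call draws a fresh $\xi$. Projections onto $X_0$ and $Y_0$ preserve supports, since both are balls or products of balls in coordinate blocks, so the hypothesis of Lemma~\ref{lem:probability-zero-chain-progress} holds.
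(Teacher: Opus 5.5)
Your proposal is correct and matches the paper's proof. Both transfer the pathwise bound $\|G_{1/(2L)}(\vct{x};\vct{p})\|>\tfrac43\eps$ on $\{u_T=0\}$ from Part~3 of Lemma~\ref{lem:gradient-lower} to the clipped instance via the value-function identity in Lemma~\ref{lem:stoch-clipped-package}, use $\mathbb P(u_T=0)\ge 3/4$ from \eqref{eq:stoch-hidden-terminal}, and restrict the expectation to that event. Your extra discussion of the contracted chain is the same justification the paper gives in the text preceding the lemma, not inside its proof.
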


\subsection{The Stochastic Lower Bound}

We now choose the chain lengths to satisfy the prescribed
initial-gap and dual-diameter bounds. With
\(\ell_0=L/C_{\ell}\) and \(s\) given by
\eqref{eq:stoch-scale}, these choices have the orders
\[
 T=\Theta\!\left(L\Del\,\eps^{-2}\right),
 \qquad
 N=\Theta\!\left(L\Dy\,\eps^{-1}\right).
\]
The number of randomized dual coordinates is therefore
\[
 M=(T-1)(N-1)=\Theta(TN).
\]
The probability \(p_N\) is chosen as in
\eqref{eq:stoch-pN} to satisfy the variance bound.
Combining the resulting progress bound with
Lemma~\ref{lem:stoch-expected-obstruction} gives the following
theorem.

\begin{theorem}[Stochastic zero-respecting lower bound]
\label{thm:stoch-zr}
There exist universal constants
\(c_{\rm d},c_{\rm n},c_1>0\) such that, for every
\(L,\Del,\Dy>0\), every \(\sigma\ge0\), and every \(\eps\)
satisfying
\begin{equation}\label{eq:stoch-smallness}
 0<\eps\le
 c_1\min\{\sqrt{L\Del},\,L\Dy\},
\end{equation}
there is an instance
\(f^{\rm sg}\in\mathcal F_{\rm NCC}(L,\Del,\Dy)\)
on finite-dimensional closed convex sets \(X_0,Y_0\), with
\(\operatorname{diam}(Y_0)=\Dy\), equipped with an unbiased
stochastic first-order oracle whose mean-square error is at most
\(\sigma^2\). For this instance and oracle, every zero-respecting
primal output \(\vct{x}\in X_0\) produced by an adaptive stochastic
zero-respecting algorithm after \(q\) oracle calls with
\begin{equation}\label{eq:stoch-query-lower}
 q<
 c_{\rm d}L^2\Dy\Del\,\eps^{-3}
 +
 c_{\rm n}L^3\Dy^2\Del\sigma^2\,\eps^{-6}
\end{equation}
satisfies
\begin{equation}\label{eq:stoch-main-obstruction}
 \mathbb E\!\left[
 \norm{\nabla(\varphi_{\rm sg})_{1/(2L)}(\vct{x})}
 \right]
 >
 \eps.
\end{equation}
Consequently, finding an expected \(\eps\)-stationary point
requires
\begin{equation}\label{eq:stoch-omega}
 \Omega\!\left(
 L^2\Dy\Del\,\eps^{-3}
 +
 L^3\Dy^2\Del\sigma^2\,\eps^{-6}
 \right)
\end{equation}
stochastic first-order oracle evaluations.
\end{theorem}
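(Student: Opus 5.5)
The proof specializes the clipped instance \eqref{eq:stoch-payoff}. We fix \(\ell_0=L/C_\ell\) and \(s\) as in \eqref{eq:stoch-scale}, so \(s=\Theta(\eps/L)\). Then we choose the chain lengths and the Bernoulli probability so that the instance lies in \(\mathcal F_{\rm NCC}(L,\Del,\Dy)\), and finally we convert the progress budget \eqref{eq:stoch-progress-budget} into the query bound \eqref{eq:stoch-query-lower}.

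\textbf{Step 1: chain lengths.} Take \(N:=\lfloor \Dy/(4\sqrt2 Rs)\rfloor\), so that \eqref{eq:stoch-dual-feas} holds and \(N=\Theta(L\Dy/\eps)\). The smallness condition \(\eps\le c_1L\Dy\) guarantees \(N\ge2\).

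Take \(T:=\lfloor \Del/(c_\eta\ell_0 s^2)-\tfrac12\rfloor\), so that by \eqref{eq:stoch-gap-exact} the initial gap is at most \(\Del\), and \(T=\Theta(L\Del/\eps^2)\). Here \(\eps\le c_1\sqrt{L\Del}\) gives \(T\ge2\).

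Set \(\operatorname{diam}(Y_0)=\Dy\) by construction. By Parts 3--4 of Lemma~\ref{lem:stoch-clipped-package}, \(f^{\rm sg}\) is concave in \(\vct{y}\) and \(L\)-smooth, hence \(f^{\rm sg}\in\mathcal F_{\rm NCC}(L,\Del,\Dy)\).

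\textbf{Step 2: variance and \(p_N\).} We have \(G_N=2\ell_0\tau_N=2\ell_0R s/\sqrt N\), so
\[
G_N^2=\Theta\bigl(L^2 s^2/N\bigr)=\Theta\bigl(\eps^2\cdot \eps/(L\Dy)\bigr)=\Theta\bigl(\eps^3/(L\Dy)\bigr).
\]
With \(p_N\) from \eqref{eq:stoch-pN}, this gives
\[
1/p_N=\max\{1,\sigma^2/G_N^2\}=\Theta\bigl(1+\sigma^2L\Dy\eps^{-3}\bigr).
\]

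\textbf{Step 3: combine.} We have \(M=(T-1)(N-1)\ge TN/4=\Theta(L^2\Dy\Del\eps^{-3})\). Since \(T,N\ge2\) gives \(M\ge1\), I would check that \(M-\log4\ge cM\); if not, I would enlarge the constants so that \(M\ge 4\).

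The budget \((M-\log4)/(2p_N)\) is then
\[
\Theta\bigl(M\max\{1,\sigma^2/G_N^2\}\bigr)\ge c\bigl(L^2\Dy\Del\eps^{-3}+L^3\Dy^2\Del\sigma^2\eps^{-6}\bigr),
\]
using \(\max\{1,x\}\ge(1+x)/2\). Choosing \(c_{\rm d},c_{\rm n}\) small ensures that any \(q\) satisfying \eqref{eq:stoch-query-lower} satisfies \eqref{eq:stoch-progress-budget}. Lemma~\ref{lem:stoch-expected-obstruction} then yields \eqref{eq:stoch-main-obstruction}, and the \(\Omega\) statement follows.

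\textbf{Main obstacle.} The delicate point is justifying the contracted progress argument, namely that Lemma~\ref{lem:probability-zero-chain-progress} applies to the length-\(M\) randomized subsequence for adaptive algorithms, and that \(u_T=0\) in the output and not merely in the queries. I would argue this as follows.

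Let the progress counter be the number of randomized coordinates already revealed. By the zero-chain property \eqref{eq:stoch-chain-progress}, the deterministic coordinates between consecutive randomized ones are revealed without cost. Each oracle call increments the counter with conditional probability at most \(p_N\), given the past. Azuma- or Chernoff-type reasoning, exactly as in Arjevani et al., then bounds the counter after \(q\) calls by \(M-1\) with probability at least \(3/4\).

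On that event, \(y_N^{(T-1)}\) was never revealed, so \(b_{T-1}\) and hence \(u_T\) have zero gradient support. By the zero-respecting output requirement together with support preservation of the projections onto \(X_0\) and \(Y_0\), the output then has \(u_T=0\).
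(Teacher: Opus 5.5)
Your proposal is correct and follows essentially the same route as the paper. It uses the same clipped instance with $s=8\eps/(3C_\delta\ell_0)$, chain lengths $T=\Theta(L\Del\eps^{-2})$ and $N=\Theta(L\Dy\eps^{-1})$ chosen to meet the gap and dual-feasibility conditions, the bound $p_N^{-1}\ge\frac12(1+\sigma^2/G_N^2)$ together with $G_N^2N=\Theta(\eps^2)$, and Lemma~\ref{lem:stoch-expected-obstruction} to conclude. The point you hedge on is genuinely needed: with only $T,N\ge2$ one can have $M<\log 4$. The paper resolves it exactly as you suggest, by shrinking $c_1$ so that $T,N\ge4$, which gives $M\ge9$ and hence $M/4\le(M-\log4)/2$.
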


The dependence on the problem parameters follows from the gradient
bound \(G_N\) and the number \(M\) of randomized dual coordinates.
Since
\[
G_N=2\ell_0\tau_N,
\qquad
\tau_N=R\alpha s,
\qquad
\alpha^2N=1,
\]
we have, under \eqref{eq:stoch-scale},
\[
 G_N^2N
 =
 4R^2\ell_0^2s^2
 =
 \Theta(\eps^2).
\]
With the choice of \(p_N\) in \eqref{eq:stoch-pN}, including
\(p_N=1\) when \(\sigma=0\),
\[
\begin{aligned}
 \frac{M}{p_N}
 &=
 \Theta\!\left(
 TN + TN\,\frac{\sigma^2}{G_N^2}
 \right) \\
 &=
 \Theta\!\left(
 TN+TN^2\sigma^2\eps^{-2}
 \right) \\
 &=
 \Theta\!\left(
 L^2\Dy\Del\,\eps^{-3}
 +
 L^3\Dy^2\Del\sigma^2\,\eps^{-6}
 \right).
\end{aligned}
\]
For the parameter choices in the proof, \(T,N\ge4\), and hence
\(M\ge9\). Therefore
\[
q p_N\le \frac{M}{4}
\]
implies
\[
q\le\frac{M-\log 4}{2p_N},
\]
which is precisely the progress condition
\eqref{eq:stoch-progress-budget}. Lemma~\ref{lem:stoch-expected-obstruction}
then rules out expected \(\eps\)-stationarity. This yields the stated
oracle lower bound. The full proof, including the choices of integer
chain lengths and verification of the function-class conditions, is
given in Appendix~\ref{app:stochastic-theorem}.

The same construction gives a lower bound under the
primal--dual-gap parameterization.
Clipping preserves both the primal value function and
the function values at $\vct{y}=0$, so the initial
primal--dual gap still equals the initial primal value gap.
Applying Theorem~\ref{thm:stoch-zr} with
$\Del=\mathcal G_0$ therefore yields the following corollary.
Its proof is given in Appendix~\ref{app:stoch-pd-gap}.

\begin{corollary}\label{cor:g0-lower-bound}
There exists a universal constant $c>0$ such that, for every
$L,\mathcal G_0,\Dy>0$, every $\sigma\ge0$, and every $\eps$
satisfying
\[
0<\eps\le c\min\{\sqrt{L\mathcal G_0},L\Dy\},
\]
there is an instance
\[
f^{\rm sg}\in
\mathcal F_{\rm NCC}^{\rm pd}(L,\mathcal G_0,\Dy)
\]
equipped with a unbiased stochastic first-order oracle whose mean-square error is at most $\sigma^2$, for which every adaptive stochastic
zero-respecting algorithm requires
\[
\Omega\!\left(
L^2\Dy\mathcal G_0\,\eps^{-3}
+
L^3\Dy^2\mathcal G_0\sigma^2\,\eps^{-6}
\right)
\]
oracle calls to return an expected $\eps$-stationary output.
\end{corollary}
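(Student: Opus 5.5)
The statement to prove is Corollary~\ref{cor:g0-lower-bound}. The plan is to reuse the clipped instance $f^{\rm sg}$ from Theorem~\ref{thm:stoch-zr}, built with $\Del:=\mathcal G_0$. The only new work is to check that this instance lies in $\mathcal F_{\rm NCC}^{\rm pd}(L,\mathcal G_0,\Dy)$, i.e.\ that its initial primal--dual gap equals its initial primal value gap $c_\eta(T+\tfrac12)\ell_0s^2$. The remaining properties carry over unchanged: smoothness, concavity, $\operatorname{diam}(Y_0)=\Dy$, the oracle with variance at most $\sigma^2$, and the expected stationarity obstruction. The bound $\Omega(L^2\Dy\mathcal G_0\eps^{-3}+L^3\Dy^2\mathcal G_0\sigma^2\eps^{-6})$ then follows verbatim from Theorem~\ref{thm:stoch-zr} under the smallness condition $\eps\le c\min\{\sqrt{L\mathcal G_0},L\Dy\}$, with $c=c_1$.

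\textbf{Step 1 (dual side at $\vct{x}=0$).} At $\vct{x}=0$ we have $\vct{c}(0,0)=0$, and the clipped quadratic terms are nonpositive. Hence each block satisfies $h_N^{\rm clip}(0,0;\vct{y})\le0$, with equality at $\vct{y}=0$. So
\[
\max_{\vct{y}\in Y_0}f^{\rm sg}(0,\vct{y})=\ell_0s^2\Psi_0(0,0,0)=\Phi_{\rm sg}(0).
\]
This is consistent with \eqref{eq:stoch-value-identity} and \eqref{eq:stoch-block-value}, since $a_i=b_i=0$ makes the extra term $\tfrac12\|\tilde{\vct a}-\tfrac12\tilde{\vct b}\|^2$ vanish.

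\textbf{Step 2 (primal side at $\vct{y}=0$).} At $\vct{y}=0$, every block reduces to $-\ell_0\alpha^2(N-1)b_i^2/8$. Therefore
\[
f^{\rm sg}(\vct{x},0)=\ell_0s^2\Bigl[\Psi_0(\tilde{\vct u},\tilde{\vct a},\tilde{\vct b})-\tfrac{\alpha^2(N-1)}{8}\|\tilde{\vct b}\|^2\Bigr],
\]
which is not identical to $\Phi_{\rm sg}$. We must show that $\inf_{X_0}f^{\rm sg}(\cdot,0)$ coincides with $\inf_{X_0}\Phi_{\rm sg}$.

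Weak duality already gives $f^{\rm sg}(\vct x,0)\le\Phi_{\rm sg}(\vct x)$, so $\inf f^{\rm sg}(\cdot,0)\le\inf\Phi_{\rm sg}$. For the reverse inequality, I would redo the infimum computation from Lemma~\ref{lem:gap}. There the infimum of $\Psi$ over $C_0$ is attained with $\tilde{\vct a}=\tilde{\vct b}=0$; this is natural because $\tilde{\vct a}=\vct\rho(\tilde{\vct u})$ and $\tilde{\vct b}=\bar{\vct r}(\tilde{\vct u})$ vanish when every $\tilde u_j\ge1$. It is attained with $\tilde u_j\ge1$ chosen so that $r(\tilde u_j)=0$, i.e.\ $\tilde u_j=1$ for $j\ge2$ and $\tilde u_1$ handled by the linear term.

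\textbf{Main obstacle.} The difficulty is that $-c_\eta\tilde u_1$ combined with $r(\tilde u_1)^2=(1-\tilde u_1)^2$ gives a finite minimum, whereas the $-\|\tilde{\vct b}\|^2$ term could a priori lower $f^{\rm sg}(\cdot,0)$ further. Since $\alpha^2(N-1)<1$, the coefficient of $\|\tilde{\vct b}\|^2$ in $\Psi_0-\tfrac{\alpha^2(N-1)}8\|\tilde{\vct b}\|^2$ is at least $1-\tfrac18>0$. So, for fixed $\tilde{\vct u}$, the $\tilde{\vct b}$-part remains a strictly convex quadratic minimized near $\bar{\vct r}(\tilde{\vct u})$. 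The lowered value this produces should be bounded by a constant fraction of $\tfrac12\|\bar{\vct r}\|^2$, which is already paid for by the $\tfrac{c_\eta}{2}r^2$ terms in $H$ if $c_\eta$ is moderate.

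If that comparison fails, the fallback is weaker but suffices. Use $\mathcal G_0^{\rm inst}\le c'\,T\ell_0s^2$ for a universal $c'$, which follows from boundedness of $H$ below by $-O(T)$ per coordinate. Then rechoose $T=\Theta(L\mathcal G_0\eps^{-2})$ with a smaller constant so that the instance has primal--dual gap at most $\mathcal G_0$; this changes only universal constants in the final bound.
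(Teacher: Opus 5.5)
Your route is the paper's: build the instance of Theorem~\ref{thm:stoch-zr} with $\Del=\mathcal G_0$ and check that the primal--dual gap of $f^{\rm sg}$ equals its value gap; your Step~1 is exactly right. The step you call the main obstacle is not one, and the paper closes it in a line: since $\tfrac12-\tfrac{\alpha^2(N-1)}{8}=\tfrac{3N+1}{8N}>0$, every term of $f^{\rm sg}(\vct{x},\vct{0})/(\ell_0s^2)$ other than $H(\tilde{\vct{u}})$ is nonnegative. These terms are $\tfrac12\|\tilde{\vct{a}}-\vct{\rho}(\tilde{\vct{u}})\|^2$, $\tfrac12\|\tilde{\vct{b}}-\bar{\vct{r}}(\tilde{\vct{u}})\|^2$, $\tfrac12\|\tilde{\vct{a}}\|^2$, and $\tfrac{3N+1}{8N}\|\tilde{\vct{b}}\|^2$. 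Hence $\inf_{X_0}f^{\rm sg}(\cdot,\vct{0})\ge-c_\eta(T+\tfrac12)\ell_0s^2$, with equality at $\tilde{\vct{u}}=(2,1,\ldots,1)$, $\tilde{\vct{a}}=\tilde{\vct{b}}=\vct{0}$. So your hedged absorption of a ``lowered value'' into the $r^2$ terms of $H$, which is unproven as written and introduces a spurious condition on $c_\eta$, is unnecessary, as is the fallback re-choice of $T$; the theorem then applies verbatim with $c=c_1$.
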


\section{Conclusion and Future Work}

We established first-order oracle lower bounds for smooth
nonconvex--concave minimax optimization within the class of
zero-respecting algorithms. Under the primal-value-gap and
primal--dual-gap parameterizations, the deterministic lower bounds are
$\Omega(L^2\Dy\Del\,\eps^{-3})$ and
$\Omega(L^2\Dy\mathcal G_0\,\eps^{-3})$, respectively.
In the stochastic setting, the corresponding lower bounds are
$\Omega(L^3\Dy^2\Del\sigma^2\,\eps^{-6})$ and
$\Omega(L^3\Dy^2\mathcal G_0\sigma^2\,\eps^{-6})$.
In their dependence on $L$, $\Dy$, and $\eps$, these bounds match
the deterministic upper bound of \cite{lin2020nearoptimal}
up to a logarithmic factor and the stochastic upper bound
of \cite{zhang2022sapdplus}.

Several questions remain open:

\paragraph{Beyond zero-respecting algorithms.}
Our lower bounds are proved for first-order zero-respecting methods. A natural next step is to determine whether the same rates hold for arbitrary, possibly randomized, first-order algorithms. In classical nonconvex optimization, zero-chain lower bounds can often be extended using random rotations or resisting-oracle arguments. Establishing an analogous reduction while preserving the primal--dual structure of the present construction is an important open problem.

\paragraph{Separate oracle complexities.}
Our lower bounds concern the joint first-order oracle, where each query
returns both $\nabla_x f$ and $\nabla_y f$. It remains open whether the
primal and dual gradient complexities differ when counted separately.
The structure of our hard instance suggests possible primal and dual
gradient complexities scaling as $\epsilon^{-2}$ and $\epsilon^{-3}$,
respectively, in the deterministic setting, and as $\epsilon^{-2}$ and
$\epsilon^{-6}$ in the bounded-variance stochastic setting. The best-known existing algorithms, however, require the same order of primal and dual gradient evaluations.  Resolving this
question may require new lower-bound constructions tailored to separate
primal and dual first-order oracles, new algorithms with improved primal or
dual gradient complexity, or both. Such results would clarify whether
existing joint-oracle methods are individually optimal in their use of
primal and dual gradients.

\paragraph{$f$-stationarity versus $\Phi$-stationarity.}
Recent algorithmic results have demonstrated that game stationarity and optimization stationarity may exhibit genuinely different complexity behavior \cite{li2025nonsmooth,li2026smoothing}. Our lower bounds concern stationarity of the value function
\(
\Phi(\vct{x})=\max_{\vct{y}\in \mathcal{Y}}f(\vct{x},\vct{y})
\). It remains open whether comparable lower bounds hold for stationarity notions defined directly in terms of the saddle function $f$, such as $\|\nabla f\|$. Since an $f$-stationary pair need not correspond to a nearly maximizing dual variable, the complexity of $f$-stationarity may differ substantially from that of $\Phi$-stationarity. Determining the tight complexity of the two notions, and whether a genuine separation exists between them, is an interesting open question.

\paragraph{NC--C Bilevel Optimization.}
An important direction for future research is to extend the present lower-bound framework to NC--C bilevel optimization, for which general first-order complexity lower bounds remain open. A natural question is whether the primal--dual information propagation mechanism developed here can be transferred to this setting and used to establish lower bounds for NC--C bilevel problems.

\section*{AI Disclosure}

The hard-instance construction was developed through iterative collaboration
with OpenAI's GPT-5.6 Sol Ultra. The authors provided the model with an
independently developed proof draft that already contained the central
dual-chain construction, the $\ell_2$ constraint on the auxiliary primal
variables, and the scaling argument underlying the $\epsilon^{-3}$
deterministic lower bound. The model was then used to help identify and
refine several composite functions in the primal component, which addressed
a key obstacle in the original construction. For the stochastic lower bound,
the authors independently introduced the stochastic dual zero-chain mechanism
used in the final proof. Building on these ingredients, the model assisted in
refining the construction and in developing several technical arguments,
including parts of the clipping mechanism. The authors subsequently simplified
and independently verified the resulting arguments and take full responsibility
for the correctness and presentation of the paper.
A Lean formalization, developed with Codex and available at \url{https://github.com/Wu-Qilong/Lower-Bounds-for-Nonconvex-Concave-Minimax-Optimization}, provides a formal verification.

\medskip

\bibliographystyle{plain}
\bibliography{ncc_minimax}
 
\newpage
 
\appendix

\section{Proofs for the Setup and Hard Instance}\label{app:auxiliary}

\subsection{Proof of Lemma~\ref{lemma:relay-properties}}\label{relay-properties}

\begin{proof}
The formulas in \eqref{eq:relay-maps} agree in value and first derivative at
$0$ and $1$, so both functions are $C^1$.  On $(0,1)$,
\[
 \nu'(t)=6t(1-t)=6r(t),\qquad
 \nu''(t)=6-12t,\qquad r'(t)=1-2t.
\]
Outside $(0,1)$, $\nu'$ is zero and $r'$ is constant with magnitude one.
This gives \eqref{eq:scalar-bounds} and
\eqref{eq:origin-identities}.

It remains to verify the two transition estimates.  If $t\le0$, then
$\nu(t)=0$; if $0<t\le2\kappa<1/4$, then
\[
 \frac{\nu(t)}{r(t)}
 =\frac{t(3-2t)}{1-t}\le2,
 \qquad
 r'(t)=1-2t\ge1-4\kappa.
\]
Similarly, if $t\ge1$, then $1-\nu(t)=0$; while for
$1-2\kappa\le t<1$,
\[
 \frac{1-\nu(t)}{r(t)}
 =\frac{(1-t)(1+2t)}{t}\le2.
\]
The claimed implications follow.
\end{proof}
\subsection{Proof of Lemma~\ref{lemma:relay-derivatives}}\label{app:relay-derivatives}
\begin{proof}
Differentiating \eqref{eq:q-rho},
\[
 Dq_i(\vct{u})[\vct{h}]
 =\nu'(u_i)(1-\nu(u_{i+1}))h_i
 -\nu(u_i)\nu'(u_{i+1})h_{i+1}.
\]
Every row and every column of $D\vct{q}(\vct{u})$ has at most two nonzero entries, each
with magnitude at most $3/2$.  Therefore
\[
 \norm{D\vct{q}(\vct{u})}_{\rm op}
 \le\sqrt{\norm{D\vct{q}(\vct{u})}_1\norm{D\vct{q}(\vct{u})}_\infty}\le3.
\]
Set
\[
 A_i(\vct{u}):=\nu'(u_i)(1-\nu(u_{i+1})),\qquad
 B_i(\vct{u}):=\nu(u_i)\nu'(u_{i+1}).
\]
Using \eqref{eq:scalar-bounds},
\[
 |A_i(\vct{u})-A_i(\vct{v})|
 \le6|u_i-v_i|+\frac94|u_{i+1}-v_{i+1}|,
\]
and
\[
 |B_i(\vct{u})-B_i(\vct{v})|
 \le\frac94|u_i-v_i|+6|u_{i+1}-v_{i+1}|.
\]
The same two-row/two-column sparsity argument gives
\[
 \norm{D\vct{q}(\vct{u})-D\vct{q}(\vct{v})}_{\rm op}\le17\norm{\vct{u}-\vct{v}}.
\]

Let $\vct{n}(\vct{z}):=\vct{z}/\sqrt{1+\norm{\vct{z}}^2}$, so that $\vct{\rho}=\vct{n}\circ \vct{q}$.  Direct
differentiation yields
\[
 D\vct{n}(\vct{z})=\frac{I}{\sqrt{1+\norm{\vct{z}}^2}}
 -\frac{\vct{z}\vct{z}^\top}{(1+\norm{\vct{z}}^2)^{3/2}},
\]
hence $\norm{D\vct{n}(\vct{z})}_{\rm op}\le1$ and
$\norm{D\vct{n}(\vct{z})-D\vct{n}(\vct{z}')}_{\rm op}\le6\norm{\vct{z}-\vct{z}'}$.  Since $\vct{q}$ is
$3$-Lipschitz,
\begin{align*}
 \norm{D\vct{\rho}(\vct{u})-D\vct{\rho}(\vct{v})}_{\rm op}
 &\le
 \norm{D\vct{n}(\vct{q}(\vct{u}))-D\vct{n}(\vct{q}(\vct{v}))}_{\rm op}\norm{D\vct{q}(\vct{u})}_{\rm op}\\
 &\quad+\norm{D\vct{n}(\vct{q}(\vct{v}))}_{\rm op}
       \norm{D\vct{q}(\vct{u})-D\vct{q}(\vct{v})}_{\rm op}\\
 &\le 6\cdot3\cdot3\norm{\vct{u}-\vct{v}}+17\norm{\vct{u}-\vct{v}}\\
 &=71\norm{\vct{u}-\vct{v}}.
\end{align*}
The bound $\norm{D\vct{\rho}(\vct{u})}_{\rm op}\le3$ follows from
$\norm{D\vct{n}}_{\rm op}\le1$ and $\norm{D\vct{q}}_{\rm op}\le3$.
\end{proof}

\subsection{Proof of Lemma~\ref{lem:gap}}\label{app:gap}
\begin{proof}
We first compute the inverse of $B_{\alpha,N}$. Define
\[
G_{ij}:=\alpha^{-2}+\min\{i,j\}-1,
\qquad i,j=1,\ldots,N.
\]
For each fixed $j$, the $j$th column of $G$ increases by one at each
index up to $j$ and remains constant after index $j$. Since the
interior rows of $B_{\alpha,N}$ act as a discrete second-difference
operator, we have
\begin{align*}
&(1+\alpha^2)G_{1j}-G_{2j}
=
\mathbf 1_{\{j=1\}}, \\
&-G_{i-1,j}+2G_{ij}-G_{i+1,j}
=
\mathbf 1_{\{i=j\}},
\qquad
2\le i\le N-1,\\
&-G_{N-1,j}+G_{Nj}
=
\mathbf 1_{\{j=N\}}.
\end{align*}
Thus $B_{\alpha,N}G=I$, and hence
\begin{equation}\label{eq:B_inv}
(B_{\alpha,N}^{-1})_{ij}
=
\alpha^{-2}+\min\{i,j\}-1.
\end{equation}

Now let
$\vct{c}(a,b):=\alpha a\vct{e}_1
-\frac{\alpha}{2}b\vct{e}_N$.
The first-order condition for maximizing $h_N(a,b;\cdot)$ is
\[
B_{\alpha,N}\vct{y}=\vct{c}(a,b).
\]
Using \eqref{eq:B_inv}, its unique solution is
\begin{equation}\label{eq:y_*}
y_k^*(a,b)
=
\frac{a}{\alpha}
-\frac{b}{2\alpha}
-\frac{\alpha b}{2}(k-1),
\qquad
k=1,\ldots,N.
\end{equation}
Since $B_{\alpha,N}$ is positive definite,
\[
h_N(a,b;\vct{y}^*)-h_N(a,b;\vct{y})
=
\frac{\ell_0}{2}
(\vct{y}-\vct{y}^*)^\top B_{\alpha,N}(\vct{y}-\vct{y}^*)
\ge 0,
\]
with equality only at $\vct{y}=\vct{y}^*$. Thus $\vct{y}^*$ is the unique
unconstrained maximizer.

From \eqref{eq:B_inv},
\[
(B_{\alpha,N}^{-1})_{11}
=
(B_{\alpha,N}^{-1})_{1N}
=
\alpha^{-2},
\qquad
(B_{\alpha,N}^{-1})_{NN}
=
\alpha^{-2}+N-1.
\]
Since $\alpha^2=1/N$,
\[
\vct{c}(a,b)^\top B_{\alpha,N}^{-1}\vct{c}(a,b)
=
a^2-ab+\frac{b^2}{4}\left(2-\frac1N\right).
\]
Substituting $\vct{y}^*$ into $h_N$, we obtain
\begin{equation}\label{eq:dual-value}
\max_{\vct{y}\in\mathbb R^N} h_N(a,b;\vct{y})
=
\frac{\ell_0}{2}
\left(a-\frac b2\right)^2.
\end{equation}

We next verify that the unconstrained maximizer lies in $Y_0$.
Rewrite \eqref{eq:y_*} as
\[
y_k^*(a,b)
=
\frac{a-c_k b}{\alpha},
\qquad
c_k:=
\frac12\left(1+\frac{k-1}{N}\right)\in[0,1].
\]
Therefore
\[
|y_k^*(a,b)|^2
\le
2N(a^2+b^2),
\]
and, summing over $k=1,\ldots,N$ and $i=1,\ldots,T-1$,
\[
\sum_{i=1}^{T-1}
\|\vct{y}^*(a_i,b_i)\|^2
\le
2N^2\bigl(\|\vct{a}\|^2+\|\vct{b}\|^2\bigr)
\le
2N^2R^2s^2.
\]
By assumption, the right-hand side is at most $(D_{\mathcal{Y}}/4)^2$.
Since $Y_0$ is the Euclidean ball of radius $D_{\mathcal{Y}}/2$, the concatenated
unconstrained maximizer lies in the interior of $Y_0$.
Moreover, each $h_N(a_i,b_i;\cdot)$ has $\vct{y}$-Hessian
$-\ell_0B_{\alpha,N}\prec0$, so $f(\vct{x},\cdot)$ is strictly concave.
This proves the first claim.

We now compute the value function. Since the unconstrained maximizer of
the dual variables lies in $Y_0$, \eqref{eq:dual-value} gives
\begin{equation}\label{eq:phi_func}
\begin{aligned}
\Phi(\vct{u},\vct{a},\vct{b})
&=
\ell_0s^2\Psi_0(\tilde{\vct{u}},\tilde{\vct{a}},\tilde{\vct{b}})
+
\frac{\ell_0}{2}
\sum_{i=1}^{T-1}
\left(a_i-\frac{b_i}{2}\right)^2 \\
&=
\ell_0s^2
\left[
\Psi_0(\tilde{\vct{u}},\tilde{\vct{a}},\tilde{\vct{b}})
+
\frac12
\left\|
\tilde{\vct{a}}-\frac12\tilde{\vct{b}}
\right\|^2
\right] \\
&=
\ell_0s^2\Psi(\tilde{\vct{u}},\tilde{\vct{a}},\tilde{\vct{b}}).
\end{aligned}
\end{equation}
This proves the second claim.

It remains to compute the initial gap. Every squared term in $\Psi$
is nonnegative. For $j\ge2$,
\[
-c_\eta\nu(t)+\frac{c_\eta}{2}r(t)^2
\ge -c_\eta,
\]
while for the first coordinate,
\[
-c_\eta t+\frac{c_\eta}{2}r(t)^2
\ge -\frac32c_\eta.
\]
Hence
\[
\Psi(\tilde{\vct{u}},\tilde{\vct{a}},\tilde{\vct{b}})
\ge
-c_\eta\left(T+\frac12\right).
\]
Equality is attained at
\[
\tilde{\vct{u}}^*=(2,1,\ldots,1),
\qquad
\tilde{\vct{a}}=\tilde{\vct{b}}=\vct{0},
\]
for which $\vct{\rho}(\tilde{\vct{u}}^*)=\bar{\vct{r}}(\tilde{\vct{u}}^*)=0$.
Since $\Psi(\vct{0},\vct{0},\vct{0})=0$, \eqref{eq:phi_func} yields
\[
\Phi(\vct{0})-\inf_{\vct{x}\in X_0}\Phi(\vct{x})
=
c_\eta\left(T+\frac12\right)\ell_0s^2.
\]
This proves the third claim.
\end{proof}

\subsection{Proof of Lemma~\ref{lem:smooth}}\label{app:smooth}
\begin{proof}
    Fix
\[
\tilde{\vct{z}}
=(\tilde{\vct{u}},\tilde{\vct{a}},\tilde{\vct{b}})
\in C_0,
\qquad
\tilde{\vct{z}}'
=(\tilde{\vct{u}}',\tilde{\vct{a}}',\tilde{\vct{b}}')
\in C_0.
\]
In particular,
\[
\|\tilde{\vct{a}}\|,\|\tilde{\vct{a}}'\|,
\|\tilde{\vct{b}}\|,\|\tilde{\vct{b}}'\|
\le R.
\]

We first bound the Lipschitz constant of the gradient of the
normalized primal component $\Psi_0$. We define
\[
\begin{aligned}
&F_a(\tilde{\vct{u}},\tilde{\vct{a}})
=
\frac12
\|\tilde{\vct{a}}-\vct{\rho}(\tilde{\vct{u}})\|^2,
\\
&F_b(\tilde{\vct{u}},\tilde{\vct{b}})
=
\frac12
\|\tilde{\vct{b}}-\bar{\vct{r}}(\tilde{\vct{u}})\|^2,
\\
&Q(\tilde{\vct{a}},\tilde{\vct{b}})
=
\frac12
\bigl(
\|\tilde{\vct{a}}\|^2+\|\tilde{\vct{b}}\|^2
\bigr),
\end{aligned}
\]
and thus $\Psi_0=H+F_a+F_b+Q$.
By Lemma~\ref{lemma:relay-properties}, $\nu'$ is $6$-Lipschitz
and $t\mapsto r(t)r'(t)$ is $2$-Lipschitz. Hence
\begin{equation}\label{eq:lip_H}
\|\nabla H(\tilde{\vct{u}})
-\nabla H(\tilde{\vct{u}}')\|
\le
8c_\eta
\|\tilde{\vct{u}}-\tilde{\vct{u}}'\|.
\end{equation}

For $F_a$, we have
\[
\nabla_{\tilde{\vct{a}}}F_a
=
\tilde{\vct{a}}-\vct{\rho}(\tilde{\vct{u}}),
\qquad
\nabla_{\tilde{\vct{u}}}F_a
=
-D\vct{\rho}(\tilde{\vct{u}})^\top
\bigl(
\tilde{\vct{a}}-\vct{\rho}(\tilde{\vct{u}})
\bigr).
\]
Then, we define
$d_u:=\|\tilde{\vct{u}}-\tilde{\vct{u}}'\|$,
$d_a:=\|\tilde{\vct{a}}-\tilde{\vct{a}}'\|$, and
$d_b:=\|\tilde{\vct{b}}-\tilde{\vct{b}}'\|$.
Using Lemma~\ref{lemma:relay-derivatives},
$\|\vct{\rho}(\tilde{\vct{u}})\|\le1$, and
$\|\tilde{\vct{a}}\|,\|\tilde{\vct{a}}'\|\le R$,
we obtain
\[
\left\|
\nabla_{\tilde{\vct{a}}}
F_a(\tilde{\vct{u}},\tilde{\vct{a}})
-
\nabla_{\tilde{\vct{a}}}
F_a(\tilde{\vct{u}}',\tilde{\vct{a}}')
\right\|
\le
d_a+3d_u,
\]
and
\[
\left\|
\nabla_{\tilde{\vct{u}}}
F_a(\tilde{\vct{u}},\tilde{\vct{a}})
-
\nabla_{\tilde{\vct{u}}}
F_a(\tilde{\vct{u}}',\tilde{\vct{a}}')
\right\|
\le
3d_a+\bigl(9+71(R+1)\bigr)d_u.
\]
Thus, since $R$ is a constant, there exists a constant
$C_a=\sqrt{(80+71R)^2+19}$ such that
\begin{equation}\label{eq:lip_a}
\|\nabla F_a(\tilde{\vct{u}},\tilde{\vct{a}})
-
\nabla F_a(\tilde{\vct{u}}',\tilde{\vct{a}}')\|
\le
C_a\sqrt{d_u^2+d_a^2}.
\end{equation}

For $F_b$, it suffices to consider
$\vartheta(t,b):=\frac12(b-r(t))^2$.
Wherever the Hessian exists,
\[
\nabla^2\vartheta(t,b)
=
\begin{pmatrix}
r'(t)^2-(b-r(t))r''(t) & -r'(t)\\
-r'(t) & 1
\end{pmatrix}.
\]
Although $r$ is unbounded on $\R$, we have
$r''(t)=0$ for $t\notin[0,1]$, while
$|r(t)|\le1/4$ for $t\in[0,1]$.
Together with $|r'(t)|\le1$, $|r''(t)|\le2$ almost
everywhere, and $|b|\le R$, this gives
\[
\left|r'(t)^2-(b-r(t))r''(t)\right|
\le 2R+\frac32
\]
wherever the Hessian exists. Consequently,
\[
\|\nabla^2\vartheta(t,b)\|_{\rm op}
\le C_b,
\qquad
C_b:=\sqrt{\left(2R+\frac32\right)^2+3}.
\]
Since $\nabla\vartheta$ is continuous across $t=0,1$,
integrating along line segments in $\R\times[-R,R]$
shows that $\nabla\vartheta$ is $C_b$-Lipschitz on
this set. Applying this bound to the disjoint
coordinate pairs in $F_b$ yields
\begin{equation}\label{eq:lip_b}
\|\nabla F_b(\tilde{\vct{u}},\tilde{\vct{b}})
-
\nabla F_b(\tilde{\vct{u}}',\tilde{\vct{b}}')\|
\le
C_b\sqrt{d_u^2+d_b^2}.
\end{equation}

Finally, $\nabla Q$ is $1$-Lipschitz. Therefore there exists
a constant $C_\Psi=8c_\eta+C_a+C_b+1>0$ such that
\begin{equation}\label{eq:lip_psi}
\|\nabla\Psi_0(\tilde{\vct{z}})
-
\nabla\Psi_0(\tilde{\vct{z}}')\|
\le
C_\Psi
\|\tilde{\vct{z}}-\tilde{\vct{z}}'\|.
\end{equation}

For $\vct{x}=(\vct{u},\vct{a},\vct{b})\in X_0$ and
$\vct{x}'=(\vct{u}',\vct{a}',\vct{b}')\in X_0$, the first term
of the hard instance depends only on the primal variables
and is given by $\ell_0s^2\Psi_0(\vct{x}/s)$.
By the chain rule and~\eqref{eq:lip_psi}, we have
\begin{equation}\label{eq:lip_psi_z}
\left\|
\nabla\left[\ell_0s^2\Psi_0(\vct{x}/s)\right]
-
\nabla\left[\ell_0s^2\Psi_0(\vct{x}'/s)\right]
\right\|
\le
C_\Psi\ell_0\|\vct{x}-\vct{x}'\|.
\end{equation}

It remains to bound the dual component terms.
For one block $h_N$, in the coordinate order $(a,\vct{y},b)$,
the Hessian divided by $\ell_0$ is
\[
\begin{pmatrix}
0
& \alpha\vct{e}_1^\top
& 0
\\
\alpha\vct{e}_1
& -B_{\alpha,N}
& -(\alpha/2)\vct{e}_N
\\
0
& -(\alpha/2)\vct{e}_N^\top
& -\alpha^2(N-1)/4
\end{pmatrix}.
\]
Since $N\ge2$ and $\alpha=N^{-1/2}\le1$, every absolute
row sum of this symmetric matrix is at most $4$.
Hence its operator norm is at most $4$, and therefore
\begin{equation}\label{eq:lip_h}
\|\nabla h_N(a,b;\vct{y})
-
\nabla h_N(a',b';\vct{y}')\|
\le
4\ell_0
\|(a,b,\vct{y})-(a',b',\vct{y}')\|.
\end{equation}
The different path blocks act on disjoint coordinates,
so the same bound holds for their sum.

Combining \eqref{eq:lip_psi_z} and \eqref{eq:lip_h},
we obtain
\[
\|\nabla f(\vct{z})-\nabla f(\vct{z}')\|
\le
(C_\Psi+4)\ell_0\|\vct{z}-\vct{z}'\|.
\]
Fix a universal constant $C_\ell$ satisfying
\[
C_\ell\ge \max\{C_\Psi+6,C_\delta+1\},
\]
where $C_\delta$ is the universal constant in
Lemma~\ref{lem:gradient-lower}.
This choice proves the claim and will also be used for
the stochastic construction.
\end{proof}

\subsection{Proof of Lemma~\ref{lem:gradient-lower}}\label{app:zero-chain}

\begin{proof}
\emph{Part 1: zero-chain property.}
Fix $\vct{z}=(\vct{x},\vct{y})\in X_0\times Y_0$,
where $\vct{x}=(\vct{u},\vct{a},\vct{b})$, and set
$m:=\prog^{\vct{\pi}}_0(\vct{z})$.
The desired inequality is immediate if $m\ge K-1$.
Otherwise, all coordinates at positions greater than $m$
are zero. We show that
\[
[\nabla f(\vct{z})]_{\pi_k}=0,
\qquad k=m+2,\ldots,K.
\]
Since $s>0$, normalization preserves the support of the
primal variables.

We first record two properties of the normalized coupling.
The definition of $\vct{q}$ and $\nu(0)=0$ imply
\[
\tilde u_i=0
\quad\Longrightarrow\quad
q_i(\tilde{\vct{u}})
=\rho_i(\tilde{\vct{u}})=0.
\]
Moreover, every potentially nonzero entry in the $j$th
column of $D\vct{q}(\tilde{\vct{u}})$ contains the factor
$\nu'(\tilde u_j)$. Thus, $\nu'(0)=0$ and the chain rule give
\[
\tilde u_j=0
\quad\Longrightarrow\quad
D\vct{q}(\tilde{\vct{u}})\vct{e}_j=0,
\qquad
D\vct{\rho}(\tilde{\vct{u}})\vct{e}_j=0.
\]

Fix $k\ge m+2$. We distinguish the four possible types
of $\pi_k$ in the ordering $\vct{\pi}$.

\emph{Case 1: $\pi_k=u_j$.}
Then $j\ge2$. Since $b_{j-1}$ immediately precedes $u_j$,
both coordinates are zero.
At $\tilde u_j=0$, the derivative of $H$ with respect to
$\tilde u_j$ vanishes, as does the contribution involving
$D\vct{\rho}(\tilde{\vct{u}})\vct{e}_j$.
The dual blocks do not depend on $\vct{u}$. Hence
\[
\partial_{u_j}f(\vct{z})
=
-\ell_0s
\bigl(\tilde b_{j-1}-r(0)\bigr)r'(0)
=
-\ell_0b_{j-1}
=0.
\]

\emph{Case 2: $\pi_k=a_i$.}
The adjacent coordinates are $u_i$ and $y_1^{(i)}$,
so $u_i=a_i=y_1^{(i)}=0$.
In particular, $\rho_i(\tilde{\vct{u}})=0$, and
\[
\partial_{a_i}f(\vct{z})
=
\ell_0s
\bigl(2\tilde a_i-\rho_i(\tilde{\vct{u}})\bigr)
+\ell_0\alpha y_1^{(i)}
=0.
\]

\emph{Case 3: $\pi_k=y_j^{(i)}$.}
Differentiating the dual block gives
\[
\ell_0^{-1}\partial_{y_j^{(i)}}f(\vct{z})
=
\begin{cases}
\alpha a_i-(1+\alpha^2)y_1^{(i)}+y_2^{(i)},
& j=1,\\[1mm]
y_{j-1}^{(i)}-2y_j^{(i)}+y_{j+1}^{(i)},
& 2\le j\le N-1,\\[1mm]
y_{N-1}^{(i)}-y_N^{(i)}-\dfrac{\alpha}{2}b_i,
& j=N.
\end{cases}
\]
Each expression involves only coordinates at positions
$k-1$, $k$, and $k+1$, all greater than $m$.
Thus $\partial_{y_j^{(i)}}f(\vct{z})=0$.

\emph{Case 4: $\pi_k=b_i$.}
The adjacent coordinates are $y_N^{(i)}$ and $u_{i+1}$,
so $y_N^{(i)}=b_i=u_{i+1}=0$. Therefore
\[
\partial_{b_i}f(\vct{z})
=
\ell_0s
\bigl(2\tilde b_i-r(\tilde u_{i+1})\bigr)
-\frac{\ell_0\alpha}{2}y_N^{(i)}
-\frac{\ell_0\alpha^2(N-1)}4b_i
=0.
\]

Consequently,
\[
\prog^{\vct{\pi}}_0\bigl(\nabla f(\vct{z})\bigr)
\le
\min\{\prog^{\vct{\pi}}_0(\vct{z})+1,K\},
\qquad
\vct{z}\in X_0\times Y_0.
\]
This proves part~1.

\emph{Part 2: normalized stationarity obstruction.}
For the second claim, fix
$(\tilde{\vct{u}},\tilde{\vct{a}},\tilde{\vct{b}})\in C_0$
with $\tilde u_T\le1/4$.
We argue by contradiction. Suppose that
$\Rcal_{C_0}(\tilde{\vct{u}},\tilde{\vct{a}},\tilde{\vct{b}})
<C_\delta$.
Then there exists
$\vct{n}=(\vct{n}_u,\vct{n}_{ab})
\in N_{C_0}(\tilde{\vct{u}},\tilde{\vct{a}},\tilde{\vct{b}})$
such that
\begin{equation}\label{eq:small_residual}
\|\nabla\Psi(\tilde{\vct{u}},\tilde{\vct{a}},\tilde{\vct{b}})
+\vct{n}\|
<
C_\delta.
\end{equation}
Since
\[
C_0
=
\mathbb R^T\times
\left\{
(\tilde{\vct{a}},\tilde{\vct{b}}):
\|\tilde{\vct{a}}\|^2+\|\tilde{\vct{b}}\|^2\le R^2
\right\},
\]
we have $\vct{n}_u=0$. Hence
\begin{equation}\label{eq:small_u_grad}
\|\nabla_{\tilde{\vct{u}}}
\Psi(\tilde{\vct{u}},\tilde{\vct{a}},\tilde{\vct{b}})\|
<
C_\delta.
\end{equation}

We first derive the implications of \eqref{eq:small_u_grad}
for the coordinates of $\tilde{\vct{u}}$.
Partition the tail coordinates as
\[
I_{\rm out}
:=
\{j\in\{2,\ldots,T\}:\tilde u_j\notin(0,1)\},
\qquad
I_{\rm mid}
:=
\{j\in\{2,\ldots,T\}:0<\tilde u_j<1\}.
\]

For $j\in I_{\rm out}$,
$\nu'(\tilde u_j)=0$ and the $j$th column of
$D\vct{\rho}(\tilde{\vct{u}})$ vanishes.
Moreover, $|r'(\tilde u_j)|=1$, and therefore
\[
(\nabla_{\tilde{\vct{u}}}\Psi)_j
=
r'(\tilde u_j)
\bigl((c_\eta+1)r(\tilde u_j)-\tilde b_{j-1}\bigr).
\]
Using \eqref{eq:small_u_grad} and
$\|\tilde{\vct{b}}\|\le R$ gives
\begin{equation}\label{eq:r_Iout_bound}
(c_\eta+1)
\left(
\sum_{j\in I_{\rm out}}r(\tilde u_j)^2
\right)^{1/2}
\le
C_\delta+R.
\end{equation}

For $j\in I_{\rm mid}$, let $t=\tilde u_j$.
Since $\nu'(t)=6r(t)$ and $r'(t)=1-2t$,
\[
(\nabla_{\tilde{\vct{u}}}\Psi)_j
=
-\gamma_j r(t)
-\tilde b_{j-1}r'(t)
-\left\langle
\tilde{\vct{a}}-\vct{\rho}(\tilde{\vct{u}}),
D\vct{\rho}(\tilde{\vct{u}})\vct{e}_j
\right\rangle,
\]
where
\[
\gamma_j
:=
c_\eta(5+2t)-r'(t)
\ge
5c_\eta-1.
\]
Using \eqref{eq:small_u_grad},
Lemma~\ref{lemma:relay-derivatives},
$\|\tilde{\vct{a}}\|,\|\tilde{\vct{b}}\|\le R$, and
$\|\vct{\rho}(\tilde{\vct{u}})\|\le1$, we obtain
\begin{equation}\label{eq:r_Imid_bound}
(5c_\eta-1)
\left(
\sum_{j\in I_{\rm mid}}r(\tilde u_j)^2
\right)^{1/2}
\le
4R+3+C_\delta.
\end{equation}

Since $I_{\rm out}$ and $I_{\rm mid}$ partition
$\{2,\ldots,T\}$, \eqref{eq:r_Iout_bound} and
\eqref{eq:r_Imid_bound} imply
\begin{align}
\|\bar{\vct{r}}(\tilde{\vct{u}})\|
&=
\left(
\sum_{j\in I_{\rm out}}r(\tilde u_j)^2
+
\sum_{j\in I_{\rm mid}}r(\tilde u_j)^2
\right)^{1/2}
\nonumber\\
&\le
\left(
\sum_{j\in I_{\rm out}}r(\tilde u_j)^2
\right)^{1/2}
+
\left(
\sum_{j\in I_{\rm mid}}r(\tilde u_j)^2
\right)^{1/2}
\nonumber\\
&\le
\frac{R+C_\delta}{c_\eta+1}
+
\frac{4R+3+C_\delta}{5c_\eta-1}.
\label{eq:rbar_prebound}
\end{align}
Define
\begin{equation}\label{eq:kappa}
\kappa
:=
\frac{R+C_\delta}{c_\eta+1}
+
\frac{4R+3+C_\delta}{5c_\eta-1}.
\end{equation}
Then
\begin{equation}\label{eq:rbar_bound}
\|\bar{\vct{r}}(\tilde{\vct{u}})\|\le\kappa.
\end{equation}

We choose the universal construction constants
$R\ge1$, $C_\delta>0$, and $c_\eta>0$ so that
\begin{equation}\label{eq:constant_conditions}
0<\kappa<\frac18,
\qquad
\frac34c_\eta-3(R+1)>C_\delta,
\end{equation}
and
\begin{equation}\label{eq:direction_condition}
12\kappa
\left(
\frac{17}{26}+\frac{\kappa}{13}+C_\delta
\right)
+\frac7{13}\kappa
+2C_\delta
<
\frac{1-4\kappa}{13}
\frac{(1-4\kappa)^2}
     {1+(1-4\kappa)^2+4\kappa^2}.
\end{equation}
Such universal choices exist: for fixed $R\ge1$ and
sufficiently small $C_\delta$, taking $c_\eta$ sufficiently
large makes $\kappa$ arbitrarily small, while the right-hand
side of \eqref{eq:direction_condition} converges to $1/26$.

We now show that every tail coordinate is either close to
zero or close to one. In fact,
\begin{equation}\label{eq:low_high_dichotomy}
|\tilde u_j|\le2\kappa
\quad\text{or}\quad
\tilde u_j\ge1-2\kappa,
\qquad
j=2,\ldots,T.
\end{equation}
Indeed, \eqref{eq:rbar_bound} gives
$|r(\tilde u_j)|\le\kappa$.
If $\tilde u_j\le0$, then
$|\tilde u_j|=|r(\tilde u_j)|\le\kappa$, while
$\tilde u_j\ge1$ satisfies the second alternative immediately.
If $0<\tilde u_j<1$, then
\[
\tilde u_j(1-\tilde u_j)\le\kappa.
\]
If both $\tilde u_j>2\kappa$ and
$\tilde u_j<1-2\kappa$, one of $\tilde u_j$ and
$1-\tilde u_j$ is at least $1/2$ and the other is larger
than $2\kappa$, giving
$\tilde u_j(1-\tilde u_j)>\kappa$, a contradiction.

We next show that the first coordinate satisfies the second
alternative:
\begin{equation}\label{eq:u1_high}
\tilde u_1\ge1-2\kappa.
\end{equation}
Otherwise $\tilde u_1<1-2\kappa<1$.
Since $r(t)r'(t)\le1/4$ for $t\le1$,
Lemma~\ref{lemma:relay-derivatives} gives
\[
\begin{aligned}
\partial_{\tilde u_1}\Psi
&=
-c_\eta
+c_\eta r(\tilde u_1)r'(\tilde u_1)
-\left\langle
\tilde{\vct{a}}-\vct{\rho}(\tilde{\vct{u}}),
D\vct{\rho}(\tilde{\vct{u}})\vct{e}_1
\right\rangle
\\
&\le
-\frac34c_\eta+3(R+1) \\
&<
-C_\delta,
\end{aligned}
\]
contradicting \eqref{eq:small_u_grad}.

We refer to the two alternatives in
\eqref{eq:low_high_dichotomy} as low and high, respectively.
Since $\tilde u_1$ is high and
$\tilde u_T\le1/4<1-2\kappa$, the terminal coordinate is low.
Thus there must be a high-to-low transition. Define
\begin{equation}\label{eq:J}
J
:=
\left\{
i\in\{1,\ldots,T-1\}:
\tilde u_i\ge1-2\kappa,\;
|\tilde u_{i+1}|\le2\kappa
\right\}.
\end{equation}
Then $J\neq\varnothing$. Let
\[
J^c:=\{1,\ldots,T-1\}\setminus J,
\]
and for any $I\subseteq\{1,\ldots,T-1\}$, let
$\vct{q}_I(\tilde{\vct{u}})$ and
$\vct{\rho}_I(\tilde{\vct{u}})$ denote the corresponding
subvectors.

We next lower bound the coupling across these transitions.
For $i\in J$, the definition of $\nu$ gives
\[
\nu(\tilde u_i)\ge1-2\kappa,
\qquad
1-\nu(\tilde u_{i+1})\ge1-2\kappa,
\]
and hence
\begin{equation}\label{eq:qJ}
q_i(\tilde{\vct{u}})
\ge
(1-2\kappa)^2
\ge
1-4\kappa.
\end{equation}
For $i\notin J$, either $\tilde u_i$ is low or
$\tilde u_{i+1}$ is high.
Lemma~\ref{lemma:relay-properties} then gives, respectively,
\[
q_i(\tilde{\vct{u}})\le2|r(\tilde u_i)|
\qquad\text{or}\qquad
q_i(\tilde{\vct{u}})\le2|r(\tilde u_{i+1})|.
\]
Assigning $i$ to the corresponding tail coordinate gives
an injective assignment, since no coordinate can be both
low and high. Therefore
\begin{equation}\label{eq:qJc}
\|\vct{q}_{J^c}(\tilde{\vct{u}})\|
\le
2\|\bar{\vct{r}}(\tilde{\vct{u}})\|
\le
2\kappa.
\end{equation}
Since $J\neq\varnothing$, \eqref{eq:qJ}--\eqref{eq:qJc}
imply
\begin{equation}\label{eq:rhoJ}
\|\vct{\rho}_J(\tilde{\vct{u}})\|^2
\ge
\frac{(1-4\kappa)^2}
     {1+(1-4\kappa)^2+4\kappa^2}.
\end{equation}

The residual bound also controls the auxiliary variables.
For fixed $\tilde{\vct{u}}$, the partial gradients with
respect to $(\tilde{\vct{a}},\tilde{\vct{b}})$ are
\begin{equation}\label{eq:ab_grad}
\begin{aligned}
\vct{g}_a
&=
3\tilde{\vct{a}}
-\frac12\tilde{\vct{b}}
-\vct{\rho}(\tilde{\vct{u}}),
\\
\vct{g}_b
&=
-\frac12\tilde{\vct{a}}
+\frac94\tilde{\vct{b}}
-\bar{\vct{r}}(\tilde{\vct{u}}).
\end{aligned}
\end{equation}
Their unique common zero is
\begin{equation}\label{eq:ab_star}
\begin{aligned}
\tilde{\vct{a}}^*
&=
\frac9{26}\vct{\rho}(\tilde{\vct{u}})
+\frac1{13}\bar{\vct{r}}(\tilde{\vct{u}}),
\\
\tilde{\vct{b}}^*
&=
\frac1{13}\vct{\rho}(\tilde{\vct{u}})
+\frac6{13}\bar{\vct{r}}(\tilde{\vct{u}}).
\end{aligned}
\end{equation}
For
$\vct{d}_a=\tilde{\vct{a}}-\tilde{\vct{a}}'$ and
$\vct{d}_b=\tilde{\vct{b}}-\tilde{\vct{b}}'$,
direct calculation gives
\[
\begin{aligned}
\left\langle
\vct{g}_a(\tilde{\vct{a}},\tilde{\vct{b}})
-\vct{g}_a(\tilde{\vct{a}}',\tilde{\vct{b}}'),
\vct{d}_a
\right\rangle+
\left\langle
\vct{g}_b(\tilde{\vct{a}},\tilde{\vct{b}})
-\vct{g}_b(\tilde{\vct{a}}',\tilde{\vct{b}}'),
\vct{d}_b
\right\rangle=
2\bigl(\|\vct{d}_a\|^2+\|\vct{d}_b\|^2\bigr)
+
\left\|\vct{d}_a-\frac12\vct{d}_b\right\|^2.
\end{aligned}
\]
Thus the $(\tilde{\vct{a}},\tilde{\vct{b}})$-gradient is
$2$-strongly monotone.

By \eqref{eq:rbar_bound} and
$\|\vct{\rho}(\tilde{\vct{u}})\|\le1$,
\[
\|\tilde{\vct{a}}^*\|
\le
\frac9{26}+\frac{\kappa}{13},
\qquad
\|\tilde{\vct{b}}^*\|
\le
\frac1{13}+\frac{6\kappa}{13}.
\]
Since $\kappa<1/8$ and $R\ge1$, these bounds imply
\[
\|\tilde{\vct{a}}^*\|^2
+
\|\tilde{\vct{b}}^*\|^2
<
R^2,
\]
so $(\tilde{\vct{a}}^*,\tilde{\vct{b}}^*)$ lies in the
interior of the radius-$R$ ball.

Define
\[
\vct{d}
:=
\bigl(
\tilde{\vct{a}}-\tilde{\vct{a}}^*,
\tilde{\vct{b}}-\tilde{\vct{b}}^*
\bigr).
\]
Since $(\tilde{\vct{a}}^*,\tilde{\vct{b}}^*)$ is feasible,
the normal-cone condition gives
\[
\langle\vct{n}_{ab},\vct{d}\rangle\ge0.
\]
Let $\vct{e}:=(\vct{g}_a,\vct{g}_b)+\vct{n}_{ab}$.
By \eqref{eq:small_residual}, $\|\vct{e}\|<C_\delta$.
Strong monotonicity therefore yields
\[
2\|\vct{d}\|^2
\le
\langle(\vct{g}_a,\vct{g}_b),\vct{d}\rangle
=
\langle\vct{e},\vct{d}\rangle
-
\langle\vct{n}_{ab},\vct{d}\rangle
\le
C_\delta\|\vct{d}\|,
\]
and hence
\begin{equation}\label{eq:ab_close}
\|\vct{d}\|\le\frac{C_\delta}{2}.
\end{equation}
Let
$\vct{e}_a:=\tilde{\vct{a}}-\tilde{\vct{a}}^*$, and
$\vct{e}_b:=\tilde{\vct{b}}-\tilde{\vct{b}}^*$.
Then \eqref{eq:ab_star} gives
\begin{equation}\label{eq:ab_expand}
\begin{aligned}
\tilde{\vct{a}}-\vct{\rho}(\tilde{\vct{u}})
&=
-\frac{17}{26}\vct{\rho}(\tilde{\vct{u}})
+\frac1{13}\bar{\vct{r}}(\tilde{\vct{u}})
+\vct{e}_a,
\\
\tilde{\vct{b}}-\bar{\vct{r}}(\tilde{\vct{u}})
&=
\frac1{13}\vct{\rho}(\tilde{\vct{u}})
-\frac7{13}\bar{\vct{r}}(\tilde{\vct{u}})
+\vct{e}_b,
\end{aligned}
\end{equation}
where $\|\vct{e}_a\|,\|\vct{e}_b\|\le C_\delta$.

We now use the high-to-low transitions to construct a
direction that contradicts \eqref{eq:small_u_grad}.
Define $\vct{v}\in\mathbb R^T$ by
\begin{equation}\label{eq:v}
v_1:=0,
\qquad
v_{i+1}:=
\begin{cases}
\rho_i(\tilde{\vct{u}}), & i\in J,\\
0, & i\notin J,
\end{cases}
\qquad i=1,\ldots,T-1.
\end{equation}
By \eqref{eq:rhoJ},
\begin{equation}\label{eq:v_norm}
\|\vct{v}\|^2
=
\|\vct{\rho}_J(\tilde{\vct{u}})\|^2
\ge
\frac{(1-4\kappa)^2}
     {1+(1-4\kappa)^2+4\kappa^2},
\qquad
\|\vct{v}\|\le1.
\end{equation}

The indices in $J$ are pairwise nonadjacent.
Moreover, for each $i\in J$, the coordinate
$\tilde u_{i+1}$ is low, so the $(i+1)$st column of
$D\vct{q}(\tilde{\vct{u}})$ has at most two nonzero
entries, each with magnitude at most $6\kappa$.
The supports of these columns are disjoint for distinct
$i\in J$. Since $\vct{v}$ is supported on the coordinates
$i+1$ with $i\in J$,
\[
D\vct{q}(\tilde{\vct{u}})\vct{v}
=
\sum_{i\in J}
v_{i+1}D\vct{q}(\tilde{\vct{u}})\vct{e}_{i+1}.
\]
Hence, by the disjointness of the supports,
\[
\begin{aligned}
\|D\vct{q}(\tilde{\vct{u}})\vct{v}\|^2
&=
\sum_{i\in J}
v_{i+1}^2
\|D\vct{q}(\tilde{\vct{u}})\vct{e}_{i+1}\|^2
\\
&\le
2(6\kappa)^2
\sum_{i\in J}v_{i+1}^2
=
2(6\kappa)^2\|\vct{v}\|^2.
\end{aligned}
\]
Therefore,
$\|D\vct{q}(\tilde{\vct{u}})\vct{v}\|
\le6\sqrt2\,\kappa\|\vct{v}\|$.
Since
\[
\vct{\rho}(\tilde{\vct{u}})
=
\frac{\vct{q}(\tilde{\vct{u}})}
{\sqrt{1+\|\vct{q}(\tilde{\vct{u}})\|^2}}
\]
and the Jacobian of the normalization map
$\vct{z}\mapsto\vct{z}/\sqrt{1+\|\vct{z}\|^2}$
has operator norm at most one,
\begin{equation}\label{eq:Drho_v}
\|D\vct{\rho}(\tilde{\vct{u}})\vct{v}\|
\le
6\sqrt2\,\kappa\|\vct{v}\|
\le
12\kappa\|\vct{v}\|.
\end{equation}

We finally evaluate the directional derivative along
$\vct{v}$. For every $j$ in the support of $\vct{v}$, we have
$j\ge2$, $\tilde u_j\le2\kappa<1$, and $v_j\ge0$.
Moreover,
\[
\partial_{\tilde u_j}H(\tilde{\vct{u}})
=
\begin{cases}
c_\eta\tilde u_j,
& \tilde u_j\le0,\\
-c_\eta(5+2\tilde u_j)r(\tilde u_j),
& 0<\tilde u_j\le2\kappa.
\end{cases}
\]
Both expressions are nonpositive. Therefore
$DH(\tilde{\vct{u}})[\vct{v}]\le0$. Using \eqref{eq:ab_expand},
\eqref{eq:rbar_bound}, \eqref{eq:rhoJ},
\eqref{eq:Drho_v}, and
$r'(\tilde u_{i+1})\ge1-4\kappa$ for $i\in J$,
we obtain
\[
\begin{aligned}
D_{\tilde{\vct{u}}}
\Psi(\tilde{\vct{u}},\tilde{\vct{a}},\tilde{\vct{b}})
[\vct{v}]
&\le
12\kappa
\left(
\frac{17}{26}
+\frac{\kappa}{13}
+C_\delta
\right)
-\frac{1-4\kappa}{13}
\|\vct{\rho}_J(\tilde{\vct{u}})\|^2
+\frac7{13}\kappa+C_\delta
\\
&\le
12\kappa
\left(
\frac{17}{26}
+\frac{\kappa}{13}
+C_\delta
\right)
+\frac7{13}\kappa+C_\delta
-\frac{1-4\kappa}{13}
\frac{(1-4\kappa)^2}
     {1+(1-4\kappa)^2+4\kappa^2}
\\
&<
-C_\delta,
\end{aligned}
\]
where the last inequality follows from
\eqref{eq:direction_condition}.
On the other hand, \eqref{eq:small_u_grad} and
$\|\vct{v}\|\le1$ imply
\[
\begin{aligned}
\left|
D_{\tilde{\vct{u}}}
\Psi(\tilde{\vct{u}},\tilde{\vct{a}},\tilde{\vct{b}})
[\vct{v}]
\right|
&\le
\|\nabla_{\tilde{\vct{u}}}
\Psi(\tilde{\vct{u}},\tilde{\vct{a}},\tilde{\vct{b}})\|
\,\|\vct{v}\|
\\
&<
C_\delta,
\end{aligned}
\]
a contradiction. Therefore,
\[
\Rcal_{C_0}
(\tilde{\vct{u}},\tilde{\vct{a}},\tilde{\vct{b}})
\ge
C_\delta,
\]
which proves the second claim.

\emph{Part 3: Moreau-envelope obstruction.}
Fix any $\vct{x}=(\vct{u},\vct{a},\vct{b})\in X_0$ with $u_T=0$.

By Lemma~\ref{lem:gap}, $\Phi$ is continuous and bounded below on $X_0$.
Hence, for this $\vct{x}$, the function
\[
 \vct{p}\longmapsto \Phi(\vct{p})+\bar L\norm{\vct{p}-\vct{x}}^2
\]
is coercive on the closed set $X_0$, so
$\Pcal_{1/(2\bar L)}(\vct{x})$ is nonempty.  Fix
$\vct{p}\in\Pcal_{1/(2\bar L)}(\vct{x})$ and set
\begin{equation}\label{eq:gprox}
 \vct{g}:=G_{1/(2\bar L)}(\vct{x};\vct{p})=2\bar L(\vct{x}-\vct{p}).
\end{equation}
By the exact value identity in Lemma~\ref{lem:gap}, $\Phi$ is the
restriction to $X_0$ of the $C^1$ function
$\vct{x}\mapsto \ell_0s^2\Psi(\vct{x}/s)$.  Hence first-order optimality of $\vct{p}$ on
the closed convex set $X_0$ gives
\begin{equation}\label{eq:prox-normal}
 \vct{g}\in\nabla\Phi(\vct{p})+N_{X_0}(\vct{p}).
\end{equation}
Suppose, toward a contradiction, that
\begin{equation}\label{eq:small-prox}
 \norm{\vct{g}}\le\frac{C_\delta}{2}\ell_0s.
\end{equation}
Write
\[
 \vct{p}=s\bar{\vct{p}},
 \qquad
 \bar{\vct{p}}=(\bar{\vct{u}},\bar{\vct{a}},\bar{\vct{b}})\in C_0.
\]
Since $u_T=0$,
\begin{align}
 |\bar u_T|
 &=\frac{|p_{u_T}-x_{u_T}|}{s}
 \le\frac{\norm{\vct{p}-\vct{x}}}s
 =\frac{\norm{\vct{g}}}{2\bar Ls}
 \le\frac{C_\delta\ell_0}{4\bar L}
 =\frac{C_\delta}{4C_{\ell}}
 <\frac14.
 \label{eq:terminal-quarter}
\end{align}
The last inequality follows from the fixed choice
$C_\ell>C_\delta$.

The value identity gives
\begin{equation}\label{eq:gradient-scaling}
 \Phi(s\bar{\vct{p}})=\ell_0s^2\Psi(\bar{\vct{p}}),
 \qquad
 \nabla\Phi(s\bar{\vct{p}})=\ell_0s\nabla\Psi(\bar{\vct{p}}).
\end{equation}
Moreover, $X_0=sC_0$ implies
$N_{X_0}(s\bar{\vct{p}})=N_{C_0}(\bar{\vct{p}})$, and the latter is a cone.  Therefore
\begin{align}
 \dist\bigl(0,\nabla\Phi(\vct{p})+N_{X_0}(\vct{p})\bigr)
 &=\dist\bigl(0,\ell_0s\nabla\Psi(\bar{\vct{p}})+N_{C_0}(\bar{\vct{p}})\bigr)\notag\\
 &=\ell_0s\,
 \dist\bigl(0,\nabla\Psi(\bar{\vct{p}})+N_{C_0}(\bar{\vct{p}})\bigr)\notag\\
 &=\ell_0s\,\Rcal_{C_0}(\bar{\vct{p}}).
 \label{eq:residual-scaling}
\end{align}
By \eqref{eq:terminal-quarter}, part~2 applies to $\bar{\vct{p}}$, and hence
$\Rcal_{C_0}(\bar{\vct{p}})\ge C_\delta$.  Combining this with
\eqref{eq:prox-normal} and \eqref{eq:residual-scaling} yields
\[
 \norm{\vct{g}}
 \ge\dist\bigl(0,\nabla\Phi(\vct{p})+N_{X_0}(\vct{p})\bigr)
 \ge C_\delta\ell_0s,
\]
contradicting \eqref{eq:small-prox}.  Therefore
\[
 \norm{G_{1/(2\bar L)}(\vct{x};\vct{p})}
 >\frac{C_\delta}{2}\ell_0s,
\]
which is exactly \eqref{eq:prox-lower-base}.  This proves part~3 and
completes the lemma.
\end{proof}

\section{Proofs for the Deterministic Lower Bound}\label{app:deterministic}
\subsection{Proof of Theorem \ref{thm:main}}\label{app:deterministic-theorem}
\begin{proof}
We apply the hard instance constructed in Section~\ref{sec:hard}. By
Lemma~\ref{lem:smooth}, its gradient is $C_\ell\ell_0$-Lipschitz.
Moreover, Lemma~\ref{lem:gradient-lower}, with
$\bar L=C_\ell\ell_0$, shows that every zero-respecting output obtained
before the terminal coordinate is activated satisfies
\[
\left\|
G_{1/(2\bar L)}(\vct{x};\vct{p})
\right\|
>
\frac{C_\delta}{2}\ell_0s.
\]
Thus, the smoothness and stationarity requirements are satisfied if
\begin{equation}\label{eq:det-smooth-stat-conditions}
C_\ell\ell_0=L,
\qquad
\frac{C_\delta}{2}\ell_0s=\epsilon.
\end{equation}

It remains to choose the two chain lengths so that the hard instance
belongs to $\mathcal F_{\rm NCC}(L,\Delta_\Phi,D_{\mathcal{Y}})$.
By Lemma~\ref{lem:gap}, the dual maximizer is feasible whenever
\begin{equation}\label{eq:det-dual-condition}
2N^2R^2s^2
\le
\left(\frac{D_{\mathcal{Y}}}{4}\right)^2.
\end{equation}
Under this feasibility condition, Lemma~\ref{lem:gap}
gives the exact initial value gap. We therefore impose the additional requirement
\begin{equation}\label{eq:det-gap-condition}
\Phi(\vct{0})-\inf_{\vct{x}\in X_0}\Phi(\vct{x})
=
c_\eta\left(T+\frac12\right)\ell_0s^2
\le
\Delta_\Phi.
\end{equation}

We now choose the parameters to satisfy
\eqref{eq:det-smooth-stat-conditions}--\eqref{eq:det-gap-condition}.
Set
\[
\ell_0:=\frac{L}{C_\ell},
\qquad
s:=\frac{2\epsilon}{C_\delta\ell_0}
=
\frac{2C_\ell\epsilon}{C_\delta L},
\]
and choose
\[
T
:=
\left\lfloor
\frac{C_\delta^2L\Delta_\Phi}
     {16c_\eta C_\ell\epsilon^2}
\right\rfloor,
\qquad
N
:=
\left\lfloor
\frac{C_\delta LD_{\mathcal{Y}}}
     {16RC_\ell\epsilon}
\right\rfloor,
\qquad
\alpha:=N^{-1/2}.
\]
For a sufficiently small universal constant $c_1$, the assumption
\[
\epsilon
\le
c_1\min\{\sqrt{L\Delta_\Phi},LD_{\mathcal{Y}}\}
\]
ensures that $T,N\ge2$ and
\begin{equation}\label{eq:det-TN-lower}
T-1
\ge
\frac{C_\delta^2L\Delta_\Phi}
     {64c_\eta C_\ell\epsilon^2},
\qquad
N
\ge
\frac{C_\delta LD_{\mathcal{Y}}}
     {32RC_\ell\epsilon}.
\end{equation}
By the choice of $N$,
\[
NRs\le \frac{\Dy}{8},
\]
which implies \eqref{eq:det-dual-condition}.
Moreover, since $T\ge2$, the choice of $T$ gives
\[
c_\eta\left(T+\frac12\right)\ell_0s^2
\le
\frac54c_\eta T\ell_0s^2
\le
\frac{5}{16}\Del
\le \Del.
\]
Thus, \eqref{eq:det-gap-condition} also holds.
Hence, by Lemmas~\ref{lem:gap} and~\ref{lem:smooth},
\[
f\in\mathcal F_{\rm NCC}(L,\Delta_\Phi,D_{\mathcal{Y}}).
\]
Finally, the zero-chain has length $K=1+(T-1)(N+3)
\ge
(T-1)N$. Using \eqref{eq:det-TN-lower},
\[
K
\ge
c_0
\frac{L^2D_{\mathcal{Y}}\Delta_\Phi}{\epsilon^3},
\qquad
c_0
:=
\frac{C_\delta^3}
     {2048c_\eta RC_\ell^2}.
\]
Hence, if
\[
q
<
c_0
\frac{L^2D_{\mathcal{Y}}\Delta_\Phi}{\epsilon^3},
\]
then $q<K$. Lemma~\ref{lem:gradient-lower} therefore gives, for every
$\vct{p}\in\Pcal_{1/(2L)}(\vct{x})$,
\[
\left\|
G_{1/(2L)}(\vct{x};\vct{p})
\right\|
>
\frac{C_\delta}{2}\ell_0s
=
\epsilon.
\]
By Lemma~\ref{lem:weak-convexity},
\[
\left\|
\nabla\varphi_{1/(2L)}(\vct{x})
\right\|
>
\epsilon.
\]
This proves the result.
\end{proof}

\subsection{Proof of Corollary \ref{cor:det-pd-gap}}\label{app:det-pd-gap}
\begin{proof}
Take $c=c_1$ from Theorem~\ref{thm:main} and apply that theorem with the value-gap budget
$\Delta_\Phi=\mathcal G_0$.  We only need to verify that the same
hard instance also satisfies the primal--dual-gap budget. For this construction, $\max_{\vct{y}\in Y_0}f(0,\vct{y})=\Phi_f(\vct{0})=0$.
Write
\[
H(\tilde{\vct{u}})
:=-c_\eta\tilde u_1-c_\eta\sum_{j=2}^T\nu(\tilde u_j)
  +\frac{c_\eta}{2}\sum_{j=1}^T r(\tilde u_j)^2.
\]
At $\vct{y}=0$, the hard instance satisfies
\[
\frac{f(\vct{x},0)}{\ell_0s^2}
=
H(\tilde{\vct{u}})
+\frac12\|\tilde{\vct{a}}-\vct{\rho}(\tilde{\vct{u}})\|^2
+\frac12\|\tilde{\vct{b}}-\bar{\vct{r}}(\tilde{\vct{u}})\|^2
+\frac12\|\tilde{\vct{a}}\|^2
+\left(\frac12-\frac{\alpha^2(N-1)}8\right)\|\tilde{\vct{b}}\|^2.
\]
Since $\alpha^2=1/N$,
\[
\frac12-\frac{\alpha^2(N-1)}8
=\frac{3N+1}{8N}>0.
\]
The lower bound on $H$ established in the proof of Lemma~\ref{lem:gap} gives
\[
H(\tilde{\vct{u}})\ge-c_\eta\left(T + \frac12\right),
\]
and equality is attained at
$\tilde{\vct{u}}^*=(2,1,\ldots,1)$ with $\tilde{\vct{a}}=\tilde{\vct{b}}=0$.
Consequently,
\[
\inf_{\vct{x}\in X_0}f(\vct{x},0)
=-c_\eta\left(T+\frac12\right)\ell_0s^2
=\inf_{\vct{x}\in X_0}\Phi_f(\vct{x}),
\]
where the last equality follows from Lemma~\ref{lem:gap} and $\Phi_f(\vct{0})=0$.  Therefore the
initial primal--dual gap equals the value-function gap:
\[
\max_{\vct{y}\in Y_0}f(0,\vct{y})-\inf_{\vct{x}\in X_0}f(\vct{x},0)
=
\Phi_f(\vct{0})-\inf_{\vct{x}\in X_0}\Phi_f(\vct{x})
\le \mathcal G_0.
\]
Thus the hard instance belongs to
$\mathcal F_{\rm NCC}^{\rm pd}(L,\mathcal G_0,D_{\mathcal{Y}})$.
Applying the query lower bound of Theorem~\ref{thm:main} with
$\Delta_\Phi=\mathcal G_0$ gives the claimed
$\Omega(L^2D_{\mathcal{Y}}\mathcal G_0\epsilon^{-3})$ bound.
\end{proof}
\section{Proofs for the Stochastic Lower Bound}\label{app:stochastic}

\subsection{Proof of Lemma~\ref{lem:stoch-clipped-package}}\label{app:clipped-path}
\begin{proof}
\emph{Part 1: maximizer and optimal value of the clipped dual block.}
Fix $\vct{x}=(\vct{u},\vct{a},\vct{b})\in X_0$
and a block index $i\in\{1,\ldots,T-1\}$.
For this part of the proof, abbreviate $a=a_i$ and $b=b_i$.

Let $\vct{y}^*$ denote the unique maximizer of the deterministic
quadratic block obtained in Lemma~\ref{lem:gap}.
Its explicit form in~\eqref{eq:stoch-ystar} gives
\[
y_k^*-y_{k+1}^*
=
\frac{\alpha b}{2},
\qquad
k=1,\ldots,N-1.
\]
Thus all successive-coordinate differences of $\vct{y}^*$
are equal. For convenience, denote their common value by
\[
d:=\frac{\alpha b}{2}.
\]
Since the primal variables satisfy $|b|\le Rs$ and
$\tau_N=R\alpha s$, we have
\begin{equation}\label{eq:stoch-edge-interior}
|d|
\le
\frac{R\alpha s}{2}
=
\frac{\tau_N}{2}
<
\tau_N.
\end{equation}
Hence all neighboring-coordinate differences of $\vct{y}^*$
lie in the quadratic branch of $\chi_{\tau_N}$.

To verify that $\vct{y}^*$ remains optimal after clipping, define
\begin{equation}\label{eq:stoch-bregman}
D_\tau(d,t)
:=
\chi_\tau(t)-\chi_\tau(d)-d(t-d).
\end{equation}
Because $|d|<\tau$, we have $\chi_\tau'(d)=d$.
By convexity of $\chi_\tau$,
\begin{equation}\label{eq:stoch-bregman-positive}
D_\tau(d,t)\ge0,
\end{equation}
with equality only when $t=d$.
Indeed, $\chi_\tau$ is strictly convex on $(-\tau,\tau)$,
while its two affine pieces have slopes $\pm\tau$,
which are different from $d$.
For convenience, write
\[
e_k(\vct{y}):=y_k-y_{k+1},
\qquad
k=1,\ldots,N-1.
\]
Since $\vct{y}^*$ satisfies
$B_{\alpha,N}\vct{y}^*=\vct{c}(a,b)$ and
$e_k(\vct{y}^*)=d$, the first-order condition of the
deterministic quadratic block gives
\begin{equation}\label{eq:stoch-source-cancel}
\vct{c}(a,b)^\top(\vct{y}-\vct{y}^*)
=
\alpha^2y_1^*(y_1-y_1^*)
+
d\sum_{k=1}^{N-1}
\bigl(e_k(\vct{y})-d\bigr).
\end{equation}
Using this identity to compare the clipped objective at
$\vct{y}$ and $\vct{y}^*$ yields
\begin{align}
h_N^{\rm clip}(a,b;\vct{y}^*)
-
h_N^{\rm clip}(a,b;\vct{y})=
\ell_0\left[
\sum_{k=1}^{N-1}
D_{\tau_N}\bigl(d,e_k(\vct{y})\bigr)
+
\frac{\alpha^2}{2}(y_1-y_1^*)^2
\right].
\label{eq:stoch-clipped-gap}
\end{align}
The right-hand side is nonnegative, and hence $\vct{y}^*$
is a global maximizer.
Moreover, equality in~\eqref{eq:stoch-clipped-gap} requires
\[
y_1=y_1^*,
\qquad
e_k(\vct{y})=d
\quad
\text{for all }k=1,\ldots,N-1.
\]
These relations determine all coordinates recursively,
and therefore $\vct{y}=\vct{y}^*$.
Thus the maximizer is unique.

Finally, \eqref{eq:stoch-edge-interior} shows that every
successive-coordinate difference of $\vct{y}^*$ lies in the
quadratic region of $\chi_{\tau_N}$.
Hence the clipped block and the deterministic quadratic block
agree at $\vct{y}^*$, and Lemma~\ref{lem:gap} gives
\[
h_N^{\rm clip}(a,b;\vct{y}^*)
=
\frac{\ell_0}{2}
\left(a-\frac b2\right)^2.
\]
This proves the first claim.

\emph{Part 2: value function and initial gap.}
By Part~1, the clipped and deterministic dual blocks have
the same unique maximizers and optimal values.
Let $\vct{y}^*(\vct{x})$ denote the concatenation of the
block maximizers $\vct{y}^*(a_i,b_i)$,
$i=1,\ldots,T-1$.
The range estimate in Lemma~\ref{lem:gap} therefore
remains valid:
\[
\|\vct{y}^*(\vct{x})\|^2
\le
2N^2R^2s^2.
\]
Under \eqref{eq:stoch-dual-feas}, the blockwise maximizer
lies in $\operatorname{int}(Y_0)$ and hence is also the
constrained maximizer. Consequently,
\[
\Phi_{\rm sg}(\vct{x})
=
\ell_0s^2
\Psi(\tilde{\vct{u}},\tilde{\vct{a}},\tilde{\vct{b}}),
\]
which is \eqref{eq:stoch-value-identity}.
Since this is exactly the same value function as in the
deterministic construction, the initial-gap identity
\eqref{eq:stoch-gap-exact} follows immediately from
Lemma~\ref{lem:gap}.

\emph{Part 3: dual concavity.}
For each edge, $\vct{y}\mapsto e_k(\vct{y})$ is affine
and $\chi_{\tau_N}$ is convex, hence
$-\chi_{\tau_N}(e_k(\vct{y}))$ is concave.
The anchor $-(\alpha^2/2)y_1^2$ is concave,
the endpoint source is affine, and the $b^2$ correction
is independent of $\vct{y}$.
Thus every clipped path block is concave on the entire
dual space. The outer relay term is independent of
$\vct{y}$, proving part~3.

\emph{Part 4: dimension-free smoothness.}
Fix $\vct{z},\vct{z}'\in X_0\times Y_0$. The primal component is unchanged from the deterministic
construction, so it remains to control the smoothness
of the clipped dual blocks.
Let $E_N:\R^N\to\R^{N-1}$ be the forward difference operator,
\[
(E_N\vct{y})_k=y_k-y_{k+1},
\qquad
k=1,\ldots,N-1.
\]
Since $\|E_N\|_{\rm op}\le2$ and $\chi'_{\tau_N}$
is $1$-Lipschitz, where $\chi'_{\tau_N}$ is applied
coordinatewise, we have
\[
\begin{aligned}
\left\|
E_N^\top\left(
\chi'_{\tau_N}(E_N\vct{y})
-
\chi'_{\tau_N}(E_N\vct{v})
\right)
\right\|
&\le
\|E_N\|_{\rm op}
\left\|
\chi'_{\tau_N}(E_N\vct{y})
-
\chi'_{\tau_N}(E_N\vct{v})
\right\|
\\
&\le
\|E_N\|_{\rm op}^2
\|\vct{y}-\vct{v}\|
\\
&\le
4\|\vct{y}-\vct{v}\|.
\end{aligned}
\]
Thus the clipped squared-difference terms have a Lipschitz
gradient with a constant independent of $N$.

Consider two points $(a,b,\vct{y})$ and
$(a',b',\vct{v})$ in one dual block, and write
\[
d_a:=|a-a'|,
\qquad
d_b:=|b-b'|,
\qquad
d_y:=\|\vct{y}-\vct{v}\|.
\]
From the definition of $h_N^{\rm clip}$,
\[
\nabla_a h_N^{\rm clip}(a,b;\vct{y})
=
\ell_0\alpha y_1,
\]
\[
\nabla_b h_N^{\rm clip}(a,b;\vct{y})
=
\ell_0\left(
-\frac{\alpha}{2}y_N
-\frac{\alpha^2(N-1)}{4}b
\right),
\]
and
\[
\nabla_{\vct{y}}h_N^{\rm clip}(a,b;\vct{y})
=
\ell_0\left[
-E_N^\top\chi'_{\tau_N}(E_N\vct{y})
-\alpha^2y_1\vct{e}_1
+\alpha a\vct{e}_1
-\frac{\alpha}{2}b\vct{e}_N
\right].
\]
Since $\alpha\le1$ and $\alpha^2(N-1)\le1$,
the preceding estimate gives
\[
\frac{1}{\ell_0}
\left|
\nabla_a h_N^{\rm clip}(a,b;\vct{y})
-
\nabla_a h_N^{\rm clip}(a',b';\vct{v})
\right|
\le
d_y,
\]
\[
\frac{1}{\ell_0}
\left|
\nabla_b h_N^{\rm clip}(a,b;\vct{y})
-
\nabla_b h_N^{\rm clip}(a',b';\vct{v})
\right|
\le
\frac12d_y+\frac14d_b,
\]
and
\[
\frac{1}{\ell_0}
\left\|
\nabla_{\vct{y}}h_N^{\rm clip}(a,b;\vct{y})
-
\nabla_{\vct{y}}h_N^{\rm clip}(a',b';\vct{v})
\right\|
\le
d_a+\frac12d_b+5d_y.
\]
Consequently, taking $C_{\rm dual}:=6$, we obtain
\begin{equation}\label{eq:stoch-one-block-smooth}
\left\|
\nabla h_N^{\rm clip}(a,b;\vct{y})
-
\nabla h_N^{\rm clip}(a',b';\vct{v})
\right\|
\le
C_{\rm dual}\ell_0
\sqrt{d_a^2+d_b^2+d_y^2}.
\end{equation}

Let $f_{\rm dual}^{\rm sg}$ denote the sum of the $T-1$
clipped dual blocks.
Since these blocks involve disjoint coordinate groups,
\eqref{eq:stoch-one-block-smooth} yields
\begin{equation}\label{eq:stoch-dual-smooth}
\left\|
\nabla f_{\rm dual}^{\rm sg}(\vct{z})
-
\nabla f_{\rm dual}^{\rm sg}(\vct{z}')
\right\|
\le
C_{\rm dual}\ell_0
\|\vct{z}-\vct{z}'\|.
\end{equation}

Let
$f_{\rm outer}(\vct{x},\vct{y})
:=\ell_0s^2\Psi_0(\vct{x}/s)$.
The primal component is the same as in
Lemma~\ref{lem:smooth}.
Hence, with $C_{\rm primal}:=C_\Psi$ as defined in
Appendix~\ref{app:smooth},
\begin{equation}\label{eq:stoch-outer-smooth}
\left\|
\nabla f_{\rm outer}(\vct{z})
-
\nabla f_{\rm outer}(\vct{z}')
\right\|
\le
C_{\rm primal}\ell_0
\|\vct{z}-\vct{z}'\|.
\end{equation}
Combining \eqref{eq:stoch-dual-smooth} and
\eqref{eq:stoch-outer-smooth} gives
\[
\left\|
\nabla f^{\rm sg}(\vct{z})
-
\nabla f^{\rm sg}(\vct{z}')
\right\|
\le
(C_{\rm primal}+C_{\rm dual})\ell_0
\|\vct{z}-\vct{z}'\|.
\]
By the fixed choice of $C_\ell$ in
Appendix~\ref{app:smooth},
\[
C_{\rm primal}+C_{\rm dual}
=C_\Psi+6\le C_\ell.
\]
Therefore,
\[
\left\|
\nabla f^{\rm sg}(\vct{z})
-
\nabla f^{\rm sg}(\vct{z}')
\right\|
\le
C_\ell\ell_0
\|\vct{z}-\vct{z}'\|.
\]
Since $\ell_0=L/C_\ell$, this further gives
\[
\left\|
\nabla f^{\rm sg}(\vct{z})
-
\nabla f^{\rm sg}(\vct{z}')
\right\|
\le
L\|\vct{z}-\vct{z}'\|,
\]
which proves \eqref{eq:stoch-smooth-package}
and completes part~4.

\emph{Part 5: zero-chain property and dual gradient bound.}
The clipped quadratic coincides with the original quadratic
in a neighborhood of the origin and still involves only
neighboring dual coordinates.
In particular, $\chi'_{\tau_N}(0)=0$, so the same
coordinate-local argument as in
Lemma~\ref{lem:gradient-lower} gives
\[
\prog^{\vct{\pi}}_0
\bigl(\nabla f^{\rm sg}(\vct{z})\bigr)
\le
\min\left\{
\prog^{\vct{\pi}}_0(\vct{z})+1,
K
\right\}.
\]

It remains to prove \eqref{eq:stoch-next-reveal}.
For every dual block $i\in\{1,\ldots,T-1\}$ and every
$j\in\{2,\ldots,N-1\}$,
\[
\nabla_{y_j^{(i)}} f^{\rm sg}(\vct{z})
=
\ell_0
\left[
\chi'_{\tau_N}
\bigl(y_{j-1}^{(i)}-y_j^{(i)}\bigr)
-
\chi'_{\tau_N}
\bigl(y_j^{(i)}-y_{j+1}^{(i)}\bigr)
\right].
\]
Since $|\chi'_{\tau_N}(t)|\le\tau_N,\, \forall t\in\R$, we have
\[
\left|
\nabla_{y_j^{(i)}} f^{\rm sg}(\vct{z})
\right|
\le
2\ell_0\tau_N,
\qquad
j=2,\ldots,N-1.
\]
For $j=N$,
\[
\nabla_{y_N^{(i)}} f^{\rm sg}(\vct{z})
=
\ell_0
\left[
\chi'_{\tau_N}
\bigl(y_{N-1}^{(i)}-y_N^{(i)}\bigr)
-
\frac{\alpha}{2}b_i
\right].
\]
Using $|b_i|\le Rs$ and $\tau_N=R\alpha s$,
\[
\left|
\nabla_{y_N^{(i)}} f^{\rm sg}(\vct{z})
\right|
\le
\ell_0
\left(
\tau_N+\frac{\alpha}{2}|b_i|
\right)
\le
\frac32\ell_0\tau_N
\le
2\ell_0\tau_N.
\]
Therefore, for every feasible $\vct{z}$,
\begin{equation}\label{eq:stoch-dual-uniform}
\left|
\nabla_{y_j^{(i)}} f^{\rm sg}(\vct{z})
\right|
\le
2\ell_0\tau_N,
\qquad
i=1,\ldots,T-1,
\quad
j=2,\ldots,N.
\end{equation}
This proves part~5.
\end{proof}

\subsection{Proof of Lemma~\ref{lem:stoch-expected-obstruction}}\label{app:expected-obstruction}
\begin{proof}
By \eqref{eq:stoch-hidden-terminal}, $\mathbb P(u_T=0)\ge\frac34$. Let
\[
\mathcal E:=\{u_T=0\}.
\]
By Lemma~\ref{lem:stoch-clipped-package}, the clipped construction has the same primal value function as the deterministic construction. Hence the two
instances have the same proximal mapping. Since $C_\ell\ell_0=L$, Part~3 of Lemma~\ref{lem:gradient-lower} and \eqref{eq:stoch-scale} give, on $\mathcal E$,
\[
\left\|
G_{1/(2L)}
\bigl(\vct{x};p_{1/(2L)}(\vct{x})\bigr)
\right\|
>
\frac{C_\delta}{2}\ell_0s
=
\frac43\epsilon.
\]
Therefore,
\[
\begin{aligned}
\mathbb E
\left\|
G_{1/(2L)}
\bigl(\vct{x};p_{1/(2L)}(\vct{x})\bigr)
\right\|
&\ge
\mathbb E\left[
\left\|
G_{1/(2L)}
\bigl(\vct{x};p_{1/(2L)}(\vct{x})\bigr)
\right\|
\mathbf 1_{\mathcal E}
\right] \\
&>
\frac43\epsilon\,\mathbb P(\mathcal E) \\
&\ge
\frac43\epsilon\cdot\frac34
=
\epsilon.
\end{aligned}
\]
This proves the claim.
\end{proof}
\subsection{Proof of Theorem~\ref{thm:stoch-zr}}
\label{app:stochastic-theorem}

\begin{proof}
We apply the clipped hard instance constructed in Section~5.1. Set
\[
    \ell_0=\frac{L}{C_{\ell}},
    \qquad
    s=\frac{8\eps}{3C_\delta\ell_0},
\]
and choose
\[
    T:=
    \left\lfloor
    \frac{9C_\delta^2L\Del}
    {256c_\eta C_{\ell}\eps^2}
    \right\rfloor,
    \qquad
    N:=
    \left\lfloor
    \frac{3C_\delta L\Dy}
    {64RC_{\ell}\eps}
    \right\rfloor,
    \qquad
    \alpha:=N^{-1/2}.
\]
For a sufficiently small universal constant $c_1$, the assumption
\eqref{eq:stoch-smallness} ensures that $T,N\ge4$ and
\begin{equation}\label{eq:stoch-chain-lower}
    T-1
    \ge
    \frac{9C_\delta^2L\Del}
    {1024c_\eta C_{\ell}\eps^2},
    \qquad
    N
    \ge
    \frac{3C_\delta L\Dy}
    {128RC_{\ell}\eps}.
\end{equation}

We first verify that the clipped instance belongs to the required
function class. By the choice of $N$,
\[
    NRs\le\frac{\Dy}{8},
\]
and hence
\[
    2N^2R^2s^2
    \le
    \frac{\Dy^2}{32}
    <
    \left(\frac{\Dy}{4}\right)^2.
\]
Thus the dual-feasibility condition in
Lemma~\ref{lem:stoch-clipped-package} is satisfied. By the choice of $T$,
\[
    c_\eta T\ell_0s^2
    \le
    \frac{\Del}{4}.
\]
Moreover, since the quantity inside the floor defining $T$ is at least
$4$,
\[
    c_\eta\ell_0s^2
    \le
    \frac{\Del}{16}.
\]
Therefore, Lemma~\ref{lem:stoch-clipped-package} gives
\[
\begin{aligned}
    \Phi_{\rm sg}(0)-\inf\Phi_{\rm sg}
    &=
    c_\eta\left(T+\frac12\right)\ell_0s^2 \\
    &\le
    \frac{\Del}{4}
    +
    \frac{\Del}{32} \\
    &=
    \frac{9\Del}{32}
    <
    \Del.
\end{aligned}
\]
Together with the dual concavity, $L$-smoothness, and
$\operatorname{diam}(Y_0)=\Dy$, this shows that $f^{\rm sg}\in\mathcal F_{\rm NCC}(L,\Del,\Dy)$. The oracle constructed in Section~\ref{subsec:stoch-oracle}, with
$p_N$ chosen as in \eqref{eq:stoch-pN}, is unbiased and has
mean-square error at most $\sigma^2$.

We next estimate the number of oracle calls required to cross the
randomized dual coordinates. Recall that
$M=(T-1)(N-1)$. Since $N\ge4$, we have $N-1\ge N/2$. Combining this with
\eqref{eq:stoch-chain-lower} gives
\begin{equation}\label{eq:stoch-M-lower}
    M
    \ge
    c_M
    \frac{L^2\Dy\Del}{\eps^3},
    \qquad
    c_M
    :=
    \frac{27C_\delta^3}
    {262144\,c_\eta R C_{\ell}^2}.
\end{equation}
We also have
\begin{equation}\label{eq:stoch-N-lower}
    N
    \ge
    c_N\frac{L\Dy}{\eps},
    \qquad
    c_N:=
    \frac{3C_\delta}{128RC_{\ell}}.
\end{equation}

On the other hand, since
\[
    G_N=2\ell_0\tau_N,
    \qquad
    \tau_N=R\alpha s,
    \qquad
    \alpha^2N=1,
\]
we have
\begin{equation}\label{eq:stoch-G-identity}
    G_N^2N
    =
    4R^2\ell_0^2s^2
    =
    c_G\eps^2,
    \qquad
    c_G:=
    \frac{256R^2}{9C_\delta^2}.
\end{equation}
The choice \eqref{eq:stoch-pN} gives
\[
    p_N^{-1}
    =
    \max\left\{
    1,\frac{\sigma^2}{G_N^2}
    \right\}
    \ge
    \frac12
    \left(
    1+\frac{\sigma^2}{G_N^2}
    \right).
\]
Consequently,
\[
\begin{aligned}
    \frac{M}{p_N}
    \ge
    \frac{M}{2}
    +
    \frac{M\sigma^2}{2G_N^2}=
    \frac{M}{2}
    +
    \frac{MN\sigma^2}{2G_N^2N}.
\end{aligned}
\]
Using \eqref{eq:stoch-M-lower},
\eqref{eq:stoch-N-lower}, and
\eqref{eq:stoch-G-identity}, we obtain
\begin{equation}\label{eq:stoch-M-over-p}
    \frac{M}{p_N}
    \ge
    c_{\rm d}'
    L^2\Dy\Del\,\eps^{-3}
    +
    c_{\rm n}'
    L^3\Dy^2\Del\sigma^2\,\eps^{-6},
\end{equation}
where $c_{\rm d}':=\frac{c_M}{2}, \,\, c_{\rm n}':=\frac{c_Mc_N}{2c_G}$
are universal positive constants. Since $T,N\ge4$, we have $M\ge9$, and therefore
\[
    \frac{M}{4}
    \le
    \frac{M-\log 4}{2}.
\]
Choose
$c_{\rm d}:=\frac{c_{\rm d}'}{4}, \,\,    c_{\rm n}:=\frac{c_{\rm n}'}{4}$. Then every $q$ satisfying \eqref{eq:stoch-query-lower} obeys
\[
\begin{aligned}
    q
    &<
    \frac14
    \left(
    c_{\rm d}'
    L^2\Dy\Del\,\eps^{-3}
    +
    c_{\rm n}'
    L^3\Dy^2\Del\sigma^2\,\eps^{-6}
    \right) \\
    &\le
    \frac{M}{4p_N} \\
    &\le
    \frac{M-\log 4}{2p_N}.
\end{aligned}
\]
Thus the progress condition
\eqref{eq:stoch-progress-budget} is satisfied.

Lemma~\ref{lem:stoch-expected-obstruction} therefore yields, for the
unique proximal point
\[
    \vct{p}=p_{1/(2L)}(\vct{x}),
\]
that
\[
    \mathbb E
    \norm{G_{1/(2L)}(\vct{x};\vct{p})}
    >
    \eps.
\]
By Lemma~\ref{lem:weak-convexity},
\[
    G_{1/(2L)}
    \bigl(\vct{x};p_{1/(2L)}(\vct{x})\bigr)
    =
    \nabla(\varphi_{\rm sg})_{1/(2L)}(\vct{x}),
\]
and hence
\[
    \mathbb E
    \norm{
    \nabla(\varphi_{\rm sg})_{1/(2L)}(\vct{x})
    }
    >
    \eps.
\]
This proves \eqref{eq:stoch-main-obstruction}. Consequently, any
stochastic zero-respecting algorithm that returns an expected
$\eps$-stationary point requires
\[
    \Omega\!\left(
    L^2\Dy\Del\,\eps^{-3}
    +
    L^3\Dy^2\Del\sigma^2\,\eps^{-6}
    \right)
\]
oracle calls.
\end{proof}

\subsection{Proof of Corollary \ref{cor:g0-lower-bound}}\label{app:stoch-pd-gap}

\begin{proof}
Consider the stochastic hard instance used in
Theorem~\ref{thm:stoch-zr}. By
Lemma~\ref{lem:stoch-clipped-package}, the clipped construction has
the same primal value function as the deterministic construction:
\[
    \Phi_{\rm sg}(\vct{x})
    =
    \Phi_f(\vct{x}).
\]
Moreover, since $\chi_{\tau_N}(0)=0$, clipping does not change the
objective at $\vct{y}=\vct{0}$:
\[
    f^{\rm sg}(\vct{x},\vct{0})
    =
    f(\vct{x},\vct{0}).
\]
Therefore the argument in the proof of
Corollary~\ref{cor:det-pd-gap} applies verbatim and gives
\[
\begin{aligned}
    \max_{\vct{y}\in Y_0}
    f^{\rm sg}(\vct{0},\vct{y})
    -
    \inf_{\vct{x}\in X_0}
    f^{\rm sg}(\vct{x},\vct{0})
    &=
    \Phi_{\rm sg}(\vct{0})
    -
    \inf_{\vct{x}\in X_0}
    \Phi_{\rm sg}(\vct{x}).
\end{aligned}
\]
Now apply Theorem~\ref{thm:stoch-zr} with
$\Del=\mathcal G_0$. The common gap is at most $\mathcal G_0$, so the
resulting instance belongs to
$\mathcal F_{\rm NCC}^{\rm pd}(L,\mathcal G_0,\Dy)$.
The claimed oracle lower bound follows directly.
\end{proof}

\end{document}